\newcommand{\ga}{\alpha}
\newcommand{\gk}{\kappa}
\newcommand{\gl}{\lambda}
\newcommand{\go}{\omega}
\newcommand{\gs}{\sigma}
\newcommand{\vt}{\vartheta}
\newcommand{\gD}{\Delta}
\newcommand{\gL}{\Lambda}
\newcommand{\gO}{\Omega}
\newcommand{\cA}{\mathcal{A}}
\newcommand{\cB}{\mathcal{B}}
\newcommand{\cC}{\mathcal{C}}
\newcommand{\cD}{\mathcal{D}}
\newcommand{\cE}{\mathcal{E}}
\newcommand{\cF}{\mathcal{F}}
\newcommand{\cG}{\mathcal{G}}
\newcommand{\cH}{\mathcal{H}}
\newcommand{\cI}{\mathcal{I}}
\newcommand{\cL}{\mathcal{L}}
\newcommand{\cM}{\mathcal{M}}
\newcommand{\cT}{\mathcal{T}}
\newcommand{\N}{\mathbb{N}}
\newcommand{\Q}{\mathbb{Q}}
\newcommand{\R}{\mathbb{R}}
\newcommand{\Z}{\mathbb{Z}}
\newcommand{\p}{\partial}
\newtheorem{theorem}{Theorem}[section]
\newtheorem{proposition}[theorem]{Proposition}
\newtheorem{lemma}[theorem]{Lemma}
\newtheorem{corollary}[theorem]{Corollary}
\theoremstyle{remark}
\newcommand{\erf}{\mathop{\operator@font erf}\nolimits}
\newcommand{\erfc}{\mathop{\operator@font erfc}\nolimits}
\newcommand{\sign}{\mathop{\operator@font sign}\nolimits}
\newenvironment{enum_a}
    {\begin{enumerate}}
    {\end{enumerate}}
\newif\if@golden  \@goldentrue
\newcommand{\f@ctor}{1}
\newlength{\aiv@width}  \setlength{\aiv@width}{210mm}
\newlength{\aiv@height} \setlength{\aiv@height}{297mm}
\newlength{\tmp@width}  \setlength{\tmp@width}{\aiv@width}
\newlength{\tmp@height} \setlength{\tmp@height}{\aiv@height}
\if@golden\setlength{\textwidth}{33pc}
  \else\setlength{\textwidth}{36pc}\fi
\relax\setlength{\textwidth}{29pc}\or
\or\setlength{\textwidth}{33pc}\fi
\relax\setlength{\textwidth}{31pc}\or
\or\setlength{\textwidth}{35pc}\fi\fi
\relax\renewcommand{\f@ctor}{53}
  \or\renewcommand{\f@ctor}{46}\or\renewcommand{\f@ctor}{43}\fi
\relax\renewcommand{\f@ctor}{51}\or
  \renewcommand{\f@ctor}{45}\or\renewcommand{\f@ctor}{42}\fi\fi
\relax \renewcommand{\f@ctor}{46}
  \or\renewcommand{\f@ctor}{43}\or\renewcommand{\f@ctor}{43}\fi
\relax\renewcommand{\f@ctor}{43}
  \or\renewcommand{\f@ctor}{40}\or\renewcommand{\f@ctor}{40}\fi\fi\fi
\let\comp\circ
\newcommand{\sgl}{\ensuremath\sqrt{2\gl}}
\newcommand{\cond}{\ensuremath\,\big|\,}
\newcommand{\Cii}{\ensuremath C_0^2(\cG)}
\newcommand{\eval}{\mathop{\big|}\nolimits}
\newcommand{\Eval}{\mathop{\Big|}\nolimits}
\newcommand{\CD}{\ensuremath C_{\gD}}
\newcommand{\Ieqref}[1]{\textup{\tagform@{I.\ref{I_#1}}}}
\newcommand{\Iref}[1]{I.\ref{I_#1}}
\newcommand{\IIeqref}[1]{\textup{\tagform@{II.\ref{II_#1}}}}
\newcommand{\IIref}[1]{II.\ref{II_#1}}
\newcommand{\Rbp}{\ensuremath\overline{\R}_+}
\newcommand{\Xil}[1][n-1]{\ensuremath\Xi^{\le #1}}
\newcommand{\cCl}[1][n-1]{\ensuremath\cC^{\le #1}}
\newcommand{\Ql}[1][n-1]{\ensuremath Q^{\le #1}}
\newcommand{\Xiu}[1][n]{\ensuremath\Xi^{\ge #1}}
\newcommand{\cCu}[1][n]{\ensuremath\cC^{\ge #1}}
\newcommand{\Qu}[1][n]{\ensuremath Q^{\ge #1}}
\newcommand{\bcB}{\ensuremath B}
\newcommand{\bRV}{\ensuremath B(\R_+\times V_c)}
\newcommand{\bcBG}[1][r]{\ensuremath B(\cG^#1)}
\newcommand{\Rrp}[1][r]{\ensuremath \hat \R^{#1}_+}
\newcommand{\sh}{\ensuremath\text{shad}}
\newlength{\BCs@ze}
\newlength{\BCsh@ft}
\DeclareFixedFont\MT{OMS}{cmsy}{m}{n}{\BCs@ze}    
\newcommand{\BigCart}{\ensuremath\mathop{\raisebox{\BCsh@ft}{{\MT\char"02}}}}
\def\pdftitle{\@gobble}
\numberwithin{equation}{section}
\date{December 6, 2010}
\title[Brownian Motions on Metric Graphs III]{%
Brownian Motions on Metric Graphs III\\
{\small Construction: General Metric Graphs}
}
\author[V.~Kostrykin]{Vadim Kostrykin}
\address{Vadim Kostrykin\newline
Institut f\"ur Mathematik\newline
Johannes Gutenberg--Universit\"at\newline
D--55099 Mainz, Germany}
\email{kostrykin@mathematik.uni-mainz.de}
\author[J.~Potthoff]{J\"urgen Potthoff}
\address{J\"urgen Potthoff\newline
Institut f\"ur Mathematik, Universit\"at Mannheim\newline
D--68131 Mann\-heim, Germany}
\email{potthoff@math.uni-mannheim.de}
\author[R.~Schrader]{Robert Schrader}
\address{Robert Schrader\newline
Institut f\"{u}r Theoretische Physik\newline
Freie Universit\"{a}t Berlin, Arnimallee~14\newline
D--14195 Berlin, Germany}
\email{schrader@physik.fu-berlin.de}
\subjclass[2010]{60J65,60J45,60H99,58J65,35K05,05C99}
\keywords{Brownian motion, metric graph, Wentzell boundary condition}
\begin{document}
\begin{abstract}
Consider a metric graph $\cG$ with set of vertices $V$. Assume that for every vertex
in $V$ one is given a Wentzell boundary condition. It is shown how one can construct
the paths of a Brownian motion on $\cG$ such that its generator --- viewed as an
operator on the space $C_0(\cG)$ of continuous functions vanishing at infinity ---
has a domain consisting of twice continuously differentiable functions with second
derivative in $C_0(\cG)$, satisfying these boundary conditions.
\end{abstract}

\maketitle
\thispagestyle{empty}

\section{Introduction and Main Result} \label{sect1}
This is the third in a series of three articles about the characterization, the
construction and the basic properties of Brownian motions on metric graphs. In the
first of these articles~\cite{BMMG1}, Brownian motions on metric graphs have been
defined, their Feller property has been shown, and their generators have been
determined, i.e., the analogue of Feller's theorem for metric graphs has been
proved. In the second article~\cite{BMMG2}, all possible Brownian motions on single
vertex graphs were constructed. In the present article all possible Brownian
motions (as defined in~\cite{BMMG1}) on a general metric graph are constructed from
the trajectories of Brownian motions on single vertex graphs. In a companion
article~\cite{BMMG0}, which serves more as a background for this series and which
can also be read as an introduction to the topic, we revisit the classical
cases of Brownian motions on bounded intervals and on the semi-line
$\R_+$, see~\cite{Fe54, Fe54a, Fe57, ItMc63, ItMc74, Kn81}.

For a general introduction to the subject of this article we refer the interested
reader to~\cite{BMMG1}, henceforth quoted as ``article~I'', while~\cite{BMMG2} is
cited as ``article~II''. We shall refer to equations, definitions, theorems etc.\
from article~I or II by placing an ``I'' or ``II'', respectively, in front. For
example, ``formula~\Ieqref{eq_2_4}'' refers to formula~(\ref{I_eq_2_4}) in
article~I, while ``definition~\Iref{def_3_1}'' points to definition~\ref{I_def_3_1}
in article~I. Unless otherwise mentioned, we continue to use the notation and the
conventions set up in these articles.

Metric graphs arise as the underlying structure of models in many domains of
science, such as physics, chemistry, computer science and engineering to mention
just a few. We refer the interested reader to~\cite{Ku04} for a review of such
models and for further references. Mathematically, metric graphs are piecewise
linear spaces with singularities at the vertices of the graph. In the present series
of articles we exclusively consider \emph{finite} metric graphs. Heuristically
speaking a finite metric graph may be viewed as a finite collection of compact or
semi-infinite intervals --- the edges --- with some of their endpoints --- the
vertices --- identified. The standard metric on the real line induces thereon a
metric in a natural way. A formal definition can be found, e.g., in article~I, and
the most important notions are also given in section~\ref{sect2} below.

We quickly recall in a slightly informal way some definitions and the main result of
article~I.

Let $\cG$ be a finite metric graph with a set of vertices $V$ and a set of edges denoted by
$\cL$. A \emph{Brownian motion on $\cG$} is by definition any normal strong Markov
process $X$ on $\cG\cup\{\gD\}$, $\gD$ being a cemetery state, such that: (i) its
paths are right continuous with left limits in $\cG$, and which are continuous up to
the lifetime $\zeta$ of $X$; (ii) when $X$ starts on an edge $l\in\cL$, $l$ isomorphic
to $[a,b]$ or to $[0,+\infty)$, and $X$ is stopped when hitting a vertex of $\cG$
(to which $l$ necessarily is incident), then the stopped process is equivalent to a
standard Brownian motion on $[a,b]$ or $[0,+\infty)$, respectively, with absorption
in the endpoint(s) of the interval. For a formal definition the reader is referred
to definition~\Iref{def_3_1}. We want to emphasize that our definition (which is a
generalization of the one given by Knight~\cite{Kn81}) excludes any jumps of $X$
other than those from a vertex to the cemetery point.

The Banach space of real valued, continuous functions on $\cG$ vanishing at infinity,
equipped with the sup-norm, is denoted by $C_0(\cG)$. Consider the generator $A$ of
$X$ on $C_0(\cG)$ with domain $\cD(A)$. Define the space $\Cii$ to consist of those
functions $f$ in $C_0(\cG)$ which are twice continuously differentiable in the open
interior $\cG^\circ = \cG\setminus V$ of $\cG$, and which are such that their second
derivative $f''$ extends from $\cG^\circ$ to a function in $C_0(\cG)$. (We identify
every metric graph with its \emph{geometric graph}, see, e.g., \cite{Ju05}, and
thereby we may consider the set of vertices and every edge as a subset of the set
$\cG$.) Let $V_\cL$ denote the subset of $V\times\cL$ given by
\begin{equation*}
    V_\cL = \bigl\{(v,l),\,v\in V \text{ and }l\in\cL(v)\bigr\}.
\end{equation*}
$\cL(v)$ stands for the set of edges of $\cG$ which are incident with $v$. We shall
also write $v_l$ for $(v,l)\in V_\cL$. Moreover, if $f$ is a real valued function on
$\cG$, $l\in\cL$ and $v\in V$ such that $l$ is incident with $v$, then the
\emph{directional derivative $f'(v_l)$ of $f$ at $v$ in direction $l$} is defined as
the inward normal derivative of $f$ on $l$ at $v$ whenever it exists. Consider data
of the following form
\begin{equation}    \label{eq1i}
\begin{split}
    a &= (a_v,\,v\in V)\in [0,1)^V\\
    b &= (b_{v_l},\,v_l\in V_\cL) \in [0,1]^{V_\cL}\\
    c &= (c_v,\,v\in V)\in [0,1]^V
\end{split}
\end{equation}
subject to the condition
\begin{equation}    \label{eq1ii}
    a_v + \sum_{l\in \cL(v)} b_{v_l} + c_v =1,\qquad \text{for every $v\in V$}.
\end{equation}
Define a subspace $\cH_{a,b,c}$ of $\Cii$ as the space of those functions
$f$ in $\Cii$ which at every vertex $v\in V$ satisfy the \emph{Wentzell boundary
condition}
\begin{equation}\label{eq1iii}
    a_v f(v) - \sum_{l\in\cL(v)} b_{v_l}f'(v_l)+ \frac{1}{2}\,c_v f''(v)=0.
\end{equation}
The main result of article~I (theorem~\Iref{thm_5_3}) is the

\begin{theorem}[Feller's theorem for metric graphs] \label{thm1i}
Let $X$ be a Brownian motion on $\cG$, and let $A$ be its generator on $C_0(\cG)$
with domain $\cD(A)$. Then there are $a$, $b$, $c$ as in~\eqref{eq1i}, \eqref{eq1ii},
so that $\cD(A)=\cH_{a,b,c}$.
\end{theorem}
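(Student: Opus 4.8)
The plan is to prove the two inclusions $\cD(A)\subseteq\cH_{a,b,c}$ and $\cH_{a,b,c}\subseteq\cD(A)$ for a suitable choice of data $a,b,c$, reading off those data from the local behaviour of $X$ at each vertex. First I would establish \emph{interior regularity}, i.e.\ that $\cD(A)\subseteq\Cii$ with $Af=\tfrac12 f''$ on $\cG^\circ$. Fix $f\in\cD(A)$ and a point $x$ in the interior of an edge $l$, and let $I$ be a small compact subinterval of $l^\circ$ containing $x$. By part~(ii) of the definition of a Brownian motion on $\cG$, the process stopped on leaving $I$ is a standard Brownian motion on $I$ absorbed at the endpoints; hence the classical one-dimensional Feller theory (or Dynkin's formula applied to the exit time of $I$) forces $f$ to be twice continuously differentiable at $x$ with $Af(x)=\tfrac12 f''(x)$. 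Since $Af\in C_0(\cG)$, we obtain $f''=2\,Af\in C_0(\cG)$, so indeed $f\in\Cii$.

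The heart of the argument is the vertex analysis, which I would carry out locally at a fixed vertex $v$ by means of Dynkin's characteristic operator. For a neighbourhood $U$ shrinking to $v$ (a star of short initial segments of the edges in $\cL(v)$) with exit time $\tau_U$, the martingale property of $f(X_t)-f(v)-\int_0^t Af(X_s)\,ds$ together with optional stopping at $\tau_U$ gives
\[
    E_v\bigl[f(X_{\tau_U})\bigr]-f(v)=E_v\Bigl[\int_0^{\tau_U}\tfrac12 f''(X_s)\,ds\Bigr].
\]
Expanding $f(X_{\tau_U})$ to first order along each incident edge, splitting $\tau_U$ into the sojourn time at $v$ and the excursion time into the edges, accounting for the loss of mass under killing (where $X_{\tau_U}=\gD$ and $f(\gD)=0$), and letting $U\downarrow\{v\}$, one obtains a single linear relation among $f(v)$, the directional derivatives $f'(v_l)$, $l\in\cL(v)$, and $f''(v)$. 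The coefficients of this relation are the intrinsic local characteristics of $X$ at $v$ --- the killing weight $a_v$, the passage weights $b_{v_l}$ along the edges, and the stickiness weight $c_v$ --- and I would verify that they are independent of the chosen $f$, nonnegative, and, after normalisation, satisfy~\eqref{eq1ii}, yielding~\eqref{eq1iii}. It is precisely here that the path-continuity hypothesis (no jumps other than to $\gD$) enters: it excludes any integral/jump term in the boundary condition and leaves the clean finite form~\eqref{eq1iii}. Making this shrinking-neighbourhood expansion rigorous in the presence of possible stickiness and killing, and proving that the coefficients are well defined and correctly normalised, is where I expect the main difficulty.

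For the converse I would exploit that $\lambda-A$ is a bijection from $\cD(A)$ onto $C_0(\cG)$ for each $\lambda>0$. Given $f\in\cH_{a,b,c}$, set $g=\lambda f-\tfrac12 f''\in C_0(\cG)$ and choose $h\in\cD(A)$ with $\lambda h-Ah=g$. By the first step $h\in\Cii$ with $Ah=\tfrac12 h''$, and by the necessity direction $h\in\cH_{a,b,c}$, so $u=f-h$ is a function in $C_0(\cG)$ solving $\tfrac12 u''=\lambda u$ on every edge and satisfying the Wentzell condition~\eqref{eq1iii} at every vertex. On each edge $u$ is a combination of $e^{\pm\sqrt{2\lambda}\,x}$ (decaying on the semi-infinite edges), so the boundary conditions collapse to a finite linear system.

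Finally I would settle the uniqueness by a maximum-principle argument: a nonzero $u\in C_0(\cG)$ with $u''=2\lambda u$ on the edges cannot attain a positive maximum or negative minimum in the interior of an edge (there the sign of $u''$ would contradict that of $\lambda u$), so an extremum is attained at some vertex $v^\ast$; at $v^\ast$ the inward directional derivatives and the second derivative $u''(v^\ast)=2\lambda u(v^\ast)$ have fixed signs, and feeding these into~\eqref{eq1iii}, using $a_{v}<1$ together with the nonnegativity and normalisation of the coefficients, forces $u(v^\ast)=0$ and hence $u\equiv0$. Consequently $f=h\in\cD(A)$, which establishes $\cH_{a,b,c}\subseteq\cD(A)$ and completes the proof.
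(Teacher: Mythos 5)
A preliminary remark on the comparison itself: the present paper does not prove this statement anywhere --- it is theorem~I.5.3, imported verbatim from article~I, and the body of this article establishes only the converse (theorem~\ref{thm1ii}) by pathwise construction. So your proposal can only be measured against the strategy of article~I, and there your outline is the classical Feller-type route one expects and that the series follows: interior regularity giving $\cD(A)\subseteq\Cii$ with $Af=\tfrac12 f''$ on $\cG^\circ$ via the stopped-process clause of definition~I.3.1; a local (Dynkin characteristic operator) analysis at each vertex producing the Wentzell data; and the equality $\cD(A)=\cH_{a,b,c}$ from the bijectivity of $\gl-A$ combined with a maximum principle. Your converse paragraph is in fact fully correct as written: if $u\in C_0(\cG)$ solves $u''=2\gl u$ on the edges, satisfies~\eqref{eq1iii}, and attains a positive maximum, continuity and the edgewise ODE push the maximum to a vertex $v^\ast$; there $a_{v^\ast}u(v^\ast)\ge 0$, $-b_{v^\ast_l}u'(v^\ast_l)\ge 0$, $c_{v^\ast}u''(v^\ast)\ge 0$, and the vanishing of the sum together with $a_{v^\ast}<1$ and the normalisation~\eqref{eq1ii} forces $u(v^\ast)=0$ (note the case where only some $b_{v^\ast_l}>0$ needs the extra observation that $u'(v^\ast_l)=0$ makes $u$ proportional to $\cosh(\sgl\, x)$ on $l$, exceeding $u(v^\ast)$ inside $l$, a contradiction).

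Two genuine soft spots remain in your necessity direction, both at the step you yourself flag as the main difficulty. First, trap vertices: if $v$ is absorbing, then $\tau_U=+\infty$ a.s.\ under $P_v$ and your optional-stopping identity is vacuous; one must instead compute $Af(v)=\lim_{t\downarrow 0}t^{-1}\bigl(E_v f(X_t)-f(v)\bigr)=0$ directly from the semigroup, which yields $f''(v)=0$, i.e.\ the case $c_v=1$, and similarly an exponential holding point with killing gives $a_vf(v)+\tfrac12 c_vf''(v)=0$ without any exit-time expansion. (The present paper implicitly relies on exactly this trichotomy --- trap, holding point with killing, Dynkin's formula otherwise --- in its proof of theorem~\ref{thm2xiii}.) Second, and more seriously, the existence of the limiting coefficients as $U\downarrow\{v\}$ is not automatic: the normalised exit weights (killed mass, exit probabilities per edge scaled appropriately, expected sojourn) live in a compact simplex, so subsequential limits exist, but one must prove the limit is unique and $f$-independent --- e.g.\ by testing the relation on enough functions with prescribed $f(v)$, $f'(v_l)$, $f''(v)$ --- before~\eqref{eq1iii} is a well-defined boundary condition; and one must separately rule out $a_v=1$ (if $a_v=1$ then $f(v)=0$ for all $f\in\cD(A)$, contradicting density of $\cD(A)$ in $C_0(\cG)$, or probabilistically the normality and right-continuity of $X$ at $v$). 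With those two points supplied, your argument is a complete and essentially standard proof of the statement.
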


The main result of the present article is the converse statement, namely

\begin{theorem} \label{thm1ii}
For any choice of the data as in~\eqref{eq1i}, \eqref{eq1ii}, there is a Brownian
motion $X$ in the sense of~\cite{BMMG1} on the metric graph $\cG$ so that its
generator $A$ has $\cH_{a,b,c}$ as its domain.
\end{theorem}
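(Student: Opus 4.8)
The plan is to build the trajectories of $X$ by piecing together the trajectories of the single vertex Brownian motions supplied by article~II, one for each vertex of $\cG$. For every $v\in V$ let $X^v$ denote the Brownian motion on the single vertex graph whose only vertex is $v$ and whose edges are (semi-infinite) copies of the edges in $\cL(v)$; by the construction of article~II we may choose $X^v$ so that its generator realizes exactly the Wentzell data $(a_v,(b_{v_l})_{l\in\cL(v)},c_v)$ at $v$. Each $X^v$ is a normal strong Markov process, right continuous with left limits, continuous up to its lifetime, and behaves as a standard one dimensional Brownian motion in the interior of each of its edges. These are the building blocks; the only global information entering the construction is the length of each edge together with the incidence relation telling us which vertex sits at the far end of an edge.

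First I would set up the concatenation. Having arrived at a vertex $v$ at a handoff time $\tau_n$, run $X^v$ started at $v$ until the first time $\tau_{n+1}$ at which it reaches, along some incident edge $l=\{v,w\}$, the point at distance equal to the length of $l$, which we identify with the neighboring vertex $w$; then restart with $X^w$ from $w$, and iterate. (Semi-infinite edges carry no far vertex, and on such an excursion the process simply follows $X^v$; the trapped and killed cases at $v$, corresponding to $c_v=1$ respectively $a_v>0$, terminate the construction at the lifetime $\zeta$.) The excursions of $X^v$ into an edge that return to $v$ before reaching $w$ are already part of $X^v$ and require no handoff, so a handoff occurs only upon a genuine traversal of a full edge. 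Consequently I expect that on any compact time interval only finitely many handoffs occur: each handoff is preceded by a displacement at least as large as the minimal edge length $L_{\min}>0$, and since from every vertex the time to move a distance $L_{\min}$ is, by the interior Brownian behavior, stochastically at least the time a standard one dimensional Brownian motion needs to travel that distance, a conditional Borel--Cantelli argument (uniform over the finitely many vertices) shows $\sum_n(\tau_{n+1}-\tau_n)=\infty$ almost surely. Thus the handoff times do not accumulate before $\zeta$, and the concatenated path is well defined, right continuous with left limits, and continuous up to $\zeta$.

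Next I would verify that the process $X$ obtained in this way is a Brownian motion on $\cG$ in the sense of definition~\Iref{def_3_1}. Since the handoff times form a discrete, nonaccumulating sequence of stopping times and the restart law at a handoff depends only on the current vertex, the strong Markov property of $X$ follows from the strong Markov property of the individual pieces $X^v$ by the standard argument for pieced together (regenerative) Markov processes; normality is inherited from the $X^v$. Property~(ii) of the definition holds by construction: in the interior of any edge $l=\{v,w\}$ both $X^v$ and $X^w$ evolve as standard one dimensional Brownian motion, and since a handoff on $l$ happens exactly when the full edge is traversed, the process stopped upon hitting $\{v,w\}$ is a standard Brownian motion on $l$ absorbed at the endpoints.

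Finally I would identify the domain of the generator $A$ of $X$ on \Co. Because $X$ is a Brownian motion on $\cG$, Feller's theorem (theorem~\ref{thm1i}) guarantees that $\cD(A)=\cH_{a',b',c'}$ for some admissible data $(a',b',c')$ satisfying \eqref{eq1i}, \eqref{eq1ii}. It remains to show $(a',b',c')=(a,b,c)$, and this is a purely local matter at each vertex: the behavior of $X$ near $v$, before any neighboring vertex is reached, coincides with that of $X^v$, whose generator realizes $(a_v,(b_{v_l}),c_v)$. Computing the generator limit $\lim_{t\downarrow0}t^{-1}\bigl(\E_x[f(X_t)]-f(x)\bigr)$ at $x=v$ for $f\in\Cii$ therefore yields the same constraint as for $X^v$, namely the Wentzell condition \eqref{eq1iii} with data $(a_v,(b_{v_l}),c_v)$; hence $\cH_{a,b,c}\subseteq\cD(A)=\cH_{a',b',c'}$ with $A=\tfrac12(\cdot)''$ on $\cH_{a,b,c}$. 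Since for normalized data the inclusion $\cH_{a,b,c}\subseteq\cH_{a',b',c'}$ forces the two boundary conditions at each vertex to be proportional, and hence, after normalization, equal, we conclude $(a',b',c')=(a,b,c)$ and $\cD(A)=\cH_{a,b,c}$. The main obstacle, I expect, is the second step: rigorously establishing that the pieced together process is strong Markov with paths that are right continuous with left limits and continuous up to the lifetime, the delicate points being the nonaccumulation of handoff times and the consistency of the gluing along shared edges.
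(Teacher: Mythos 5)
Your construction is, at the level of ideas, the same as the paper's: the paper likewise takes the single--vertex Brownian motions $X^v$ supplied by article~II as building blocks, concatenates their trajectories at the times when a full new edge has been traversed (in the paper's formalism these are the hitting times of \emph{shadow vertices}, the points at distance $b_k$ along the old external edges), uses a Borel--Cantelli argument for the non-accumulation of the crossover times $S_n$, and identifies the boundary conditions by locality --- Dynkin's formula only sees an arbitrarily small neighborhood of each vertex --- so that theorem~\ref{thm1i} pins down $\cD(A)$. The genuine divergence is the step you compress into ``the strong Markov property of $X$ follows \dots\ by the standard argument for pieced together (regenerative) Markov processes'': that is exactly where the paper spends nearly all of its effort, and it does \emph{not} argue that way. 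It proves only the \emph{simple} Markov property of the pasted process, through a detailed analysis of the control chain $\bigl((S_n,K_n)\bigr)$ and the product structure of the sample space (lemmas~\ref{lem2ii}--\ref{lem2vii}, proposition~\ref{prop2i}); it then proves the strong Markov property at the single hitting time $H_V$ (lemma~\ref{lem2ix}), feeds this into the first passage time formula~\eqref{eq2xxii} with the explicit Dirichlet resolvent kernels~\eqref{eq2xxiii} to show the resolvent preserves $C_0(\cG)$, i.e.\ that $X$ is Feller (proposition~\ref{prop2x}), and only then deduces the full strong Markov property from the general theorem for Feller processes with right continuous paths. Regeneration at the handoff times alone does not give strong Markovianity at arbitrary stopping times; your plan could be completed either by the paper's Feller route or by citing a general piecing-out theorem, but as written this step is an announcement rather than an argument (you do flag it honestly), and it also presupposes the fact, proved in the paper's appendix, that the $S_n$ are stopping times of the concatenated process.

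Two smaller points. Your domain inclusion runs the wrong way: the local comparison with $X^v$ shows that every $f\in\cD(A)=\cH_{a',b',c'}$ satisfies the $(a_v,b_v,c_v)$ condition at $v$, i.e.\ $\cD(A)\subseteq\cH_{a,b,c}$, after which your normalization argument correctly forces equality; the reverse inclusion $\cH_{a,b,c}\subseteq\cD(A)$ is not what the generator-limit computation delivers. And the paper's joining formalism excludes tadpoles, which it treats separately in section~\ref{sect4} by inserting an auxiliary ``non-skew'' vertex at the midpoint of the loop; your one-shot description silently covers this case with $w=v$, but it deserves explicit mention since the single-vertex graph at $v$ must then carry two edge copies for the tadpole.
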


The proof we give in this article consists in the pathwise construction of a
Brownian motion for any given set of data $a$, $b$, $c$. It will be carried out in
section~\ref{sect3}. The basic strategy is laid out in section~\ref{sect2}: There we
show how one can construct from two metric graphs a new metric graph by joining a
certain number of external edges of the given graphs. Furthermore, if one is given
two Brownian motions (in the sense of article~I) on the given graphs, then on the
new graph a Brownian motion is constructed from the previous ones by pasting their
trajectories together appropriately.

The article is concluded in section~\ref{sect4} by a discussion of the inclusion
of tadpoles. Furthermore, there is an appendix with a technical result on
the crossover times which is used in section~2.

\section{Joining Two Metric Graphs} \label{sect2}
Throughout this section we suppose that $\cG_k=(V_k,\cI_k, \cE_k,\p_k)$, $k=1$, $2$,
are two finite metric graphs. That is, for $k=1$, $2$, $V_k$ is a nonempty finite
set of vertices, $\cI_k$ a finite set of internal edges, each of which is isomorphic
to a compact interval, $\cE_k\ne\emptyset$ is a finite set of external edges, each
of which is isomorphic to the half line $\R_+$. $\p_k$, $k=1$, $2$, is a mapping
from $\cI_k \cup\cE_k$ into $V_k\cup(V_k\times V_k)$ which determines the
combinatorial structure of the graph $\cG_k$. It maps an internal edge $i\in\cI_k$
to an ordered pair $(\p_k^-(i),\p_k^+(i))\in V_k\times V_k$ of vertices, called the
\emph{initial} and \emph{final vertex of $i$}, while $e\in\cE_k$ is mapped to
$\p_k(e)\in V_k$, called the \emph{initial vertex of $e$}. (For a more detailed
description of metric graphs, the reader is referred to article~I and the references
quoted there.) In the following subsection we shall construct a new metric graph
$\cG=(V,\cI, \cE,\p)$ from $\cG_1$ and $\cG_2$ by connecting some of their external
edges.

Except for our discussion in section~\ref{sect4}, we shall assume throughout that
the metric graphs under consideration do not have \emph{tadpoles}, i.e., internal
edges $i$ for which initial and final vertices coincide.

As in~\cite{BMMG1}, we shall identify each metric graph with its \emph{geometric graph}
(see, e.g., \cite{Ju05}). That is, we also consider a metric graph as the union
of a collection of compact intervals and half-lines, endowed with an equivalence
relation identifying some the endpoints of these intervals, which determines the
combinatorial structure of the graph. In this sense, we view the set of vertices and
the edges as subsets of the graph, and their points as points of the graph.

It will be convenient to consider the metric graphs $\cG_1$, $\cG_2$ as
subgraphs of the metric graph $\cG_0=\cG_1\uplus\cG_2$  which is their (disjoint)
union: $\cG_0=(V_0,\cI_0,\cE_0,\p_0)$, with $V_0=V_1\cup V_2$, $\cI_0=\cI_1\cup
\cI_2$, $\cE_0=\cE_1\cup \cE_2$, and where the map $\p_0$ comprises the maps
$\p_1$, $\p_2$ in the obvious way.

\subsection{Construction of the graph \boldmath $\cG$} \label{ssect2i}
Suppose that $N$ is a natural number such that $N\le \min(|\cE_1|,|\cE_2|)$. For
$k=1$, $2$, select subsets $\cE_k'\subset \cE_k$ of edges with
$|\cE_1'|=|\cE_2'|=N$ to be joined. Let these sets be labeled as follows
\begin{equation*}
	\cE'_1 = \{e_1,\dotsc,e_N\},\quad
	\cE'_2 = \{l_1,\dotsc,l_N\}.
\end{equation*}
\begin{figure}[ht]
\begin{center}
    \includegraphics[scale=.8]{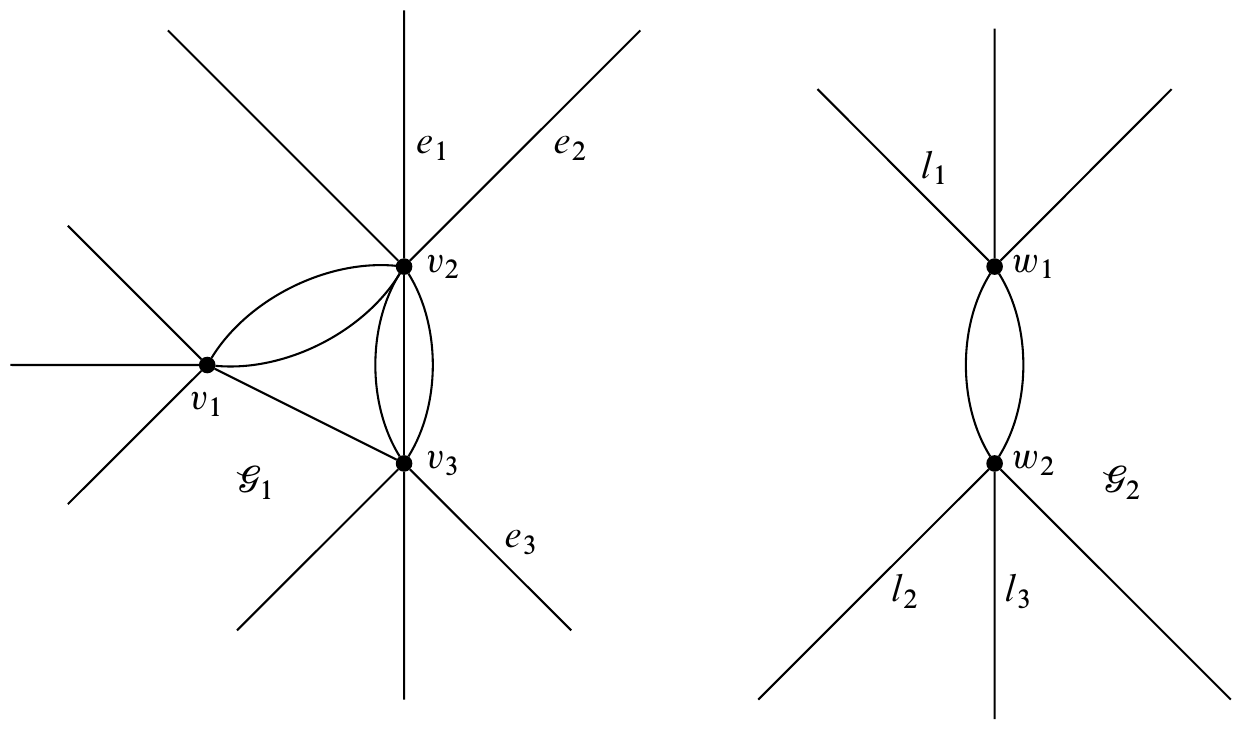}
    \caption{Two metric graphs $\cG_1$, $\cG_2$, to be joined by connecting the
             pairs of external lines
             $(e_1,l_1)$, $(e_2,l_2)$ and $(e_3,l_3)$.} \label{fig1}
\end{center}
\end{figure}

In addition we assume that we are given strictly positive numbers $b_1$, \dots, $b_N$,
which will serve as the lengths of the new internal edges, as well as $\gs_k\in\{-1,1\}$,
$k=1$, \dots, $N$, which will determine the orientations of the new internal edges.
For every $k\in\{1,\dotsc,N\}$ we associate with the interval $[0,b_k]$ an abstract
edge $i_k$ (not in $\cI_0$) which is isomorphic to $[0,b_k]$. Set $\cI_c =
\{i_1,\dotsc,i_N\}$, and
\begin{align*}
	V   &= V_0,\\
	\cI &= \cI_0\cup\cI_c,\\
	\cE &= \cE_0\setminus (\cE'_1\cup\cE'_2).
\end{align*}
The combinatorial structure of $\cG$ is determined by $\p$, which we construct in
two steps: Let $\p'$ be the restriction of $\p_0$ to
$\cI_0\cup\cE_0\setminus(\cE'_1\cup\cE'_2)$. Then $\p$ is the extension of $\p'$ to
$\cI\cup\cE$, which is defined by
\begin{equation*}
	\p(i_k) = \begin{cases}
				\bigl(\p_1(e_k),\p_2(l_k)\bigr),	&\text{if $\gs_k=1$},\\
				\bigl(\p_2(l_k),\p_1(e_k)\bigr),	&\text{if $\gs_k=-1$},
			 \end{cases}
                \qquad k=1,\dotsc,N.
\end{equation*}
\begin{figure}[ht]
\begin{center}
    \includegraphics[scale=.8]{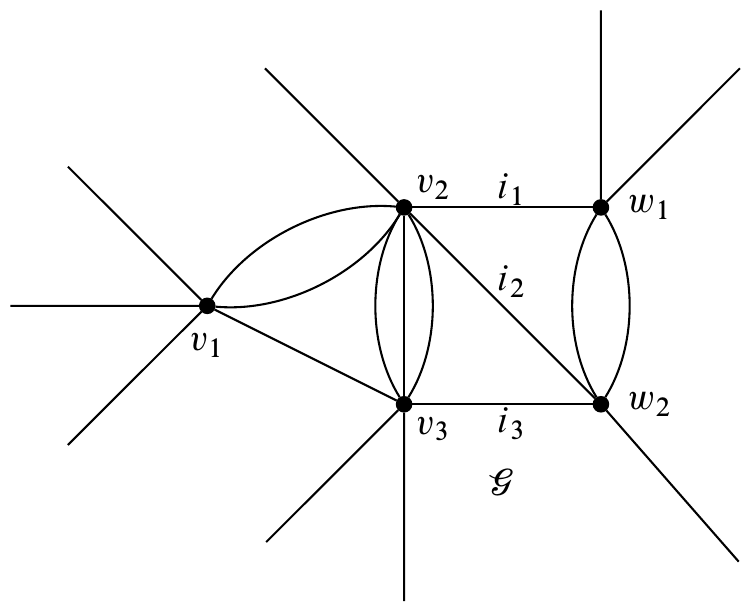}
    \caption{The graph $\cG$ after joining $\cG_1$ and $\cG_2$.}  \label{fig2}
\end{center}
\end{figure}
Figure~\ref{fig2} shows an example of a metric graph which is constructed from the
two metric graphs in figure~\ref{fig1} by joining the $N=3$ pairs of external edges
$(e_1,l_1)$, $(e_2,l_2)$ and $(e_3,l_3)$. The new internal edges $i_1$, $i_2$ and
$i_3$ have the lengths $1$, $\sqrt{2}$ and $1$ respectively (in some scale).

Conversely, let a metric graph $\cG$ be given. Associate with every vertex $v\in V$
of $\cG$ a single vertex graph $\cG(v)$ with vertex $v$ and $n(v)$ external
edges, where $n(v)$ is the number of edges incident with $v$ in $\cG$. Then it is clear
that we can reconstruct $\cG$ from the single vertex graphs $\cG(v)$, $v\in V$,
by finitely many applications of the joining procedure described above.

For the purposes below it will be convenient to introduce some additional notation.
We let $V_c\subset V$ denote the subset of vertices of $\cG$ which are connected to
each other by the new internal edges in $\cI_c$. That is, $v\in V_c$ is such that
there exists at least one $i\in\cI_c$ with $v\in\p(i)$. For notational simplicity,
here and below we also use $\p(l)$ to denote the set consisting of $\p^-(l)$ and
$\p^+(l)$ if $l\in \cI$, and of $\p(l)$ if $l\in\cE$. In the example of the
figures~\ref{fig1} and~\ref{fig2}, $V_c = \{v_2,v_3,w_1,w_2\}$.

Recall that for a metric graph $\cH = (V_\cH,\cI_\cH,\cE_\cH,\p_\cH)$, $\cH^\circ =
\cH\setminus V_\cH$ denotes its open interior. Similarly, if $l$ is an edge in
$\cI_\cH\cup\cE_\cH$, then  its open interior $l^\circ$ is defined to be the set
$l\setminus\p_\cH(l)$. Any point $\xi\in\cH^\circ$ is in one-to-one correspondence
with its \emph{local coordinates} $(l,x)$, where $l$ is the edge to which $\xi$
belongs, and $x\in(0,+\infty)$ if $l\in\cE_\cH$, while $x\in (a,b)$ if $l\in\cI_\cH$
is isomorphic to $[a,b]$.

Consider a vertex $v\in V_c$ which belongs to $\cG_1$, and let $i_k\in\cI_c$,
$k\in\{1,\dotsc,N\}$, be an internal edge connecting $v$ to $\cG_2$, i.e.,
$v\in\p(i_k)$. Then the point $\eta\in\cG_2^\circ$ with local coordinates
$(l_k,b_k)$ is called a \emph{shadow vertex} of the vertex $v$. $\sh(v)\subset \cG^0_2$
is the set of all shadow vertices of $v$. If $v\in V_c\cap\cG_2$, its set of shadow
vertices (which are points in $\cG_1^\circ$) is defined analogously. $V_s=\sh(V_c)
= \cup_{v\in V_c}\sh(v)$ is the set of all shadow vertices. If $\xi\in V_s$, then
there exists a unique $v\in V_c$ so that $\xi\in\sh(v)$. Thus, setting $\gk(\xi) = v$ we
have defined a mapping $\gk$ from $V_s$ onto $V_c$. Of course, in general $\gk$ is not
injective. In figure~\ref{fig3} the shadow vertices of the example above are
depicted as small circles on the external edges, i.e., $V_s=\{\xi_1,\xi_2,\xi_3,\eta_1,
\eta_2,\eta_3\}$. For example, $\sh(v_2)=\{\eta_1,\eta_2\}$, $\sh(w_1)=\{\xi_1\}$,
and $\gk(\xi_2)=w_2$, $\gk(\eta_2)=v_2$.
\begin{figure}[ht]
\begin{center}
    \includegraphics[scale=.8]{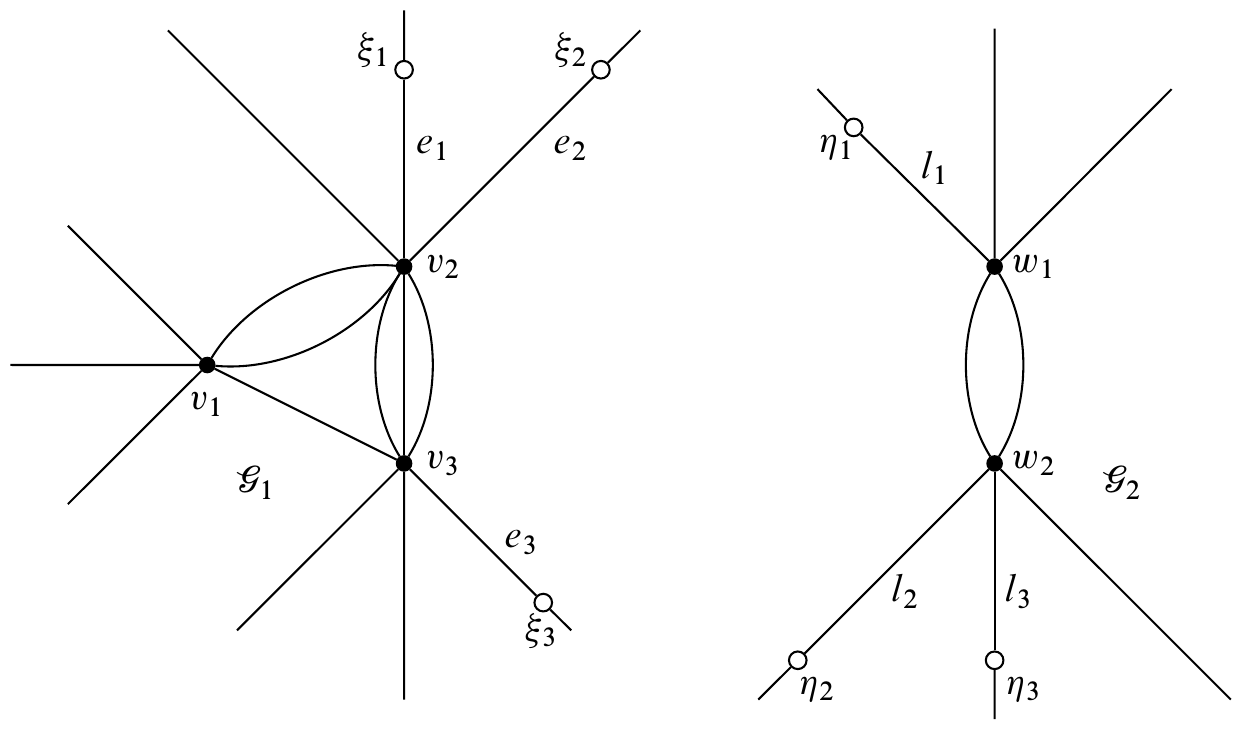}
    \caption{The graphs $\cG_1$, $\cG_2$ with the shadow vertices (small circles).}
    \label{fig3}
\end{center}
\end{figure}

\subsection{Construction of a Preliminary Version of the Brownian Motion} \label{ssect2ii}
From now we suppose that we are given a family of probability spaces
\begin{equation*}
    (\Xi^0,\cC^0,Q^0_\xi), \qquad \,\xi\in\cG_0,
\end{equation*}
and that thereon a Brownian motion with state space $\cG_0$ in the sense
of definition~\Iref{def_3_1} is defined. This Brownian motion is
denoted by $Z^0=(Z^0(t),\,t\in\R_+)$. Actually, since $\cG_0=\cG_1\cup \cG_2$ and
$\cG_1$, $\cG_2$ are disconnected, this is the same as saying that we are given a
Brownian motion on $\cG_1$ and one on $\cG_2$. However, notationally it will be more
convenient to view this as one stochastic process. We assume, as we may, that $Z^0$
has exclusively c\`adl\`ag paths which are continuous up to the lifetime $\zeta^0$ of $Z^0$.
As in the previous articles, $\gD$ denotes a universal cemetery point.
$\cF^0=(\cF^0_t,\,t\in\R_+)$ denotes the natural filtration of $Z^0$. The hitting
time of $V_s$ by $Z^0$ is denoted by $\tau^0$, i.e.,
\begin{equation*}
    \tau^0 = \inf\,\{t>0,\,Z^0(t) \in V_s\}.
\end{equation*}
Furthermore, we assume that $\vt=(\vt_t,\,t\in\R_+)$ is a family of shift operators
for $Z^0$ acting on $\Xi^0$.

For any topological space $(T,\cT)$ denote by $C_\gD(\R_+,T)$ the space of mappings
$f$ from $\R_+$ into $T\cup\{\gD\}$ which are right continuous, have left limits in
$T$, are continuous up to their lifetime
\begin{equation*}
    \zeta_f = \inf\,\{t>0,\,f(t)= \gD\},
\end{equation*}
and which are such that $f(t)=\gD$ implies $f(s)=\gD$ for all $s\ge t$. In particular
and in the present context, $f\in\CD(\R_+,\cG_0)$ is either continuous from $\R_+$ into
$\cG_0$ or it has a jump from $\cG_1$ or $\cG_2$ to $\gD$, but there can be no jump
from $\cG_1$ to $\cG_2$ or vice versa.

We shall make use of some special versions of the process $Z^0$, which we introduce
now. For every $v\in V_c$, $Z^1_v=(Z^1_v(t),\,t\in\R_+)$ denotes a Brownian motion
on $\cG_0$ defined on another probability space $(\Xi^1_v,\cC^1_v,\mu^1_v)$. We
suppose that $Z^1_v$ exclusively has paths which start in $v$ and which belong to
$\CD(\R_+,\cG_0)$. (For example, one can use a standard path space construction to
obtain such a version from $(\Xi^0,\cC^0, Q^0_v, Z^0)$.) The hitting time of $V_s$ by
$Z^1_v$ is denoted by $\tau^1_v$, its lifetime by $\zeta^1_v$.

The idea to define the preliminary version $Y=(Y(t),\,t\in\R_+)$ of the Brownian
motion on $\cG$ is to construct its paths as follows. Let $\xi\in\cG$ be a given
starting point. $\cG$ (viewed as a set) has the following decomposition (cf.\
figure~\ref{fig4}):
\begin{equation*}
	\cG =  \hat\cG_1\uplus\hat\cG_2,
\end{equation*}
with
\begin{align*}
	\hat\cG_1 &= \cG_1\setminus \bigl(e^\circ_1\cup\dotsb\cup e^\circ_N\bigr),\\
    \hat\cG_2 &= \bigl(\cG_2\setminus \bigl(l^\circ_1\cup\dotsb\cup l^\circ_N\bigr)\bigr)
                        \cup \bigl(i_1^\circ\cup\dotsc\cup i_N^\circ\bigr).
\end{align*}
\begin{figure}[ht]
\begin{center}
    \includegraphics[scale=.8]{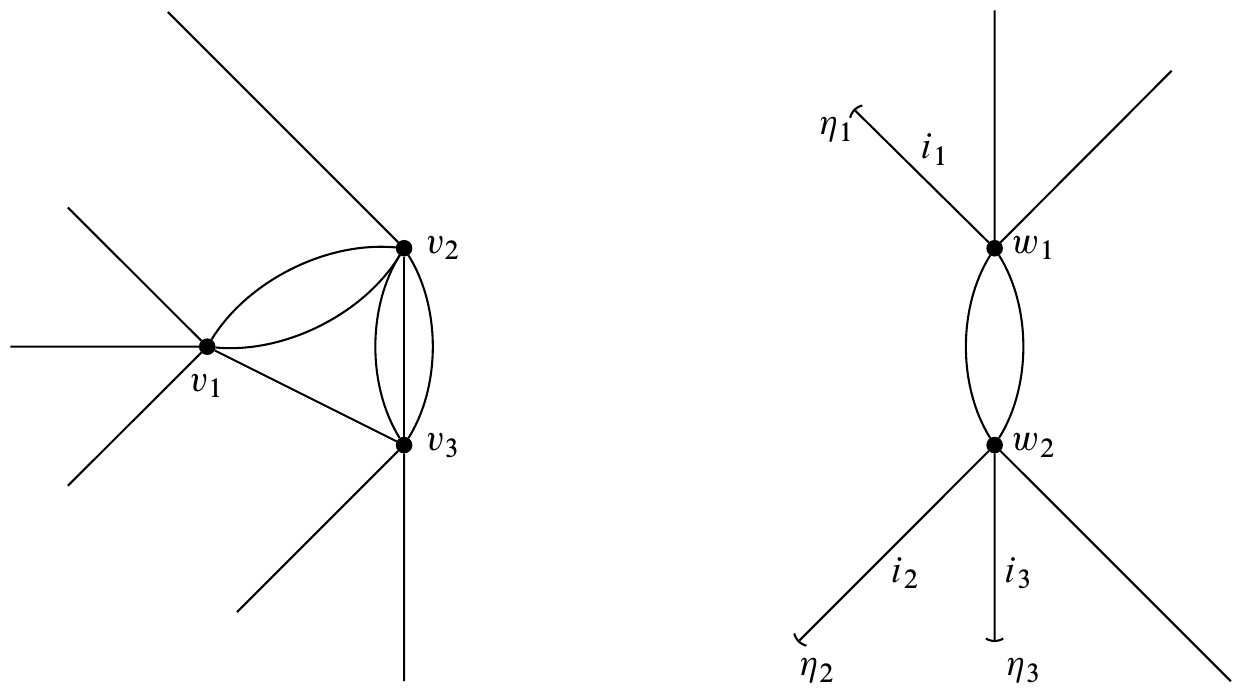}
    \caption{The starting points of $Y$.} \label{fig4}
\end{center}
\end{figure}
Thus we may consider $\xi$ instead as a point in $\hat\cG_1\uplus\hat\cG_2\subset
\cG_0$.

We pause here for the following remark: Of course, the convention we make that
all new open inner edges $i^\circ_1,\dotsc,i^\circ_N$ are attached to $\hat\cG_2$
is somewhat arbitrary. Just as well any subset of them could have been attached
to $\hat\cG_1$ instead. Even though different conventions lead to processes
with different paths, the main result of this section, theorem~\ref{thm2xiii},
remains unchanged. It follows that all resulting processes are equivalent to each
other.

Let $Y$ start as $Z^0$ in $\xi\in\hat\cG_1\uplus\hat\cG_2$, and consider one trajectory.
(In order to avoid any confusion, let us point out that even though $\xi\in\hat\cG_k$,
$k=1$, $2$, the process $Z^0$ moves in $\cG_k$.) If this
trajectory reaches the cemetery point $\gD$ before hitting the set $V_s$ of shadow
vertices, it is the complete trajectory of $Y$ and it stays forever at the cemetery.
If the trajectory hits a shadow vertex $\eta\in V_s$ before its lifetime expires,
this piece of the trajectory of $Y$ ends at the hitting time $\tau^0$. Set
$v=\gk(\eta)$, and let the trajectory of $Y$ continue with an (independent) trajectory
of $Z^1_v$ until its lifetime expires or it hits a shadow vertex, and so on.
Figure~\ref{fig5} explains the idea.
\begin{figure}[ht]
\begin{center}
    \includegraphics[scale=.8]{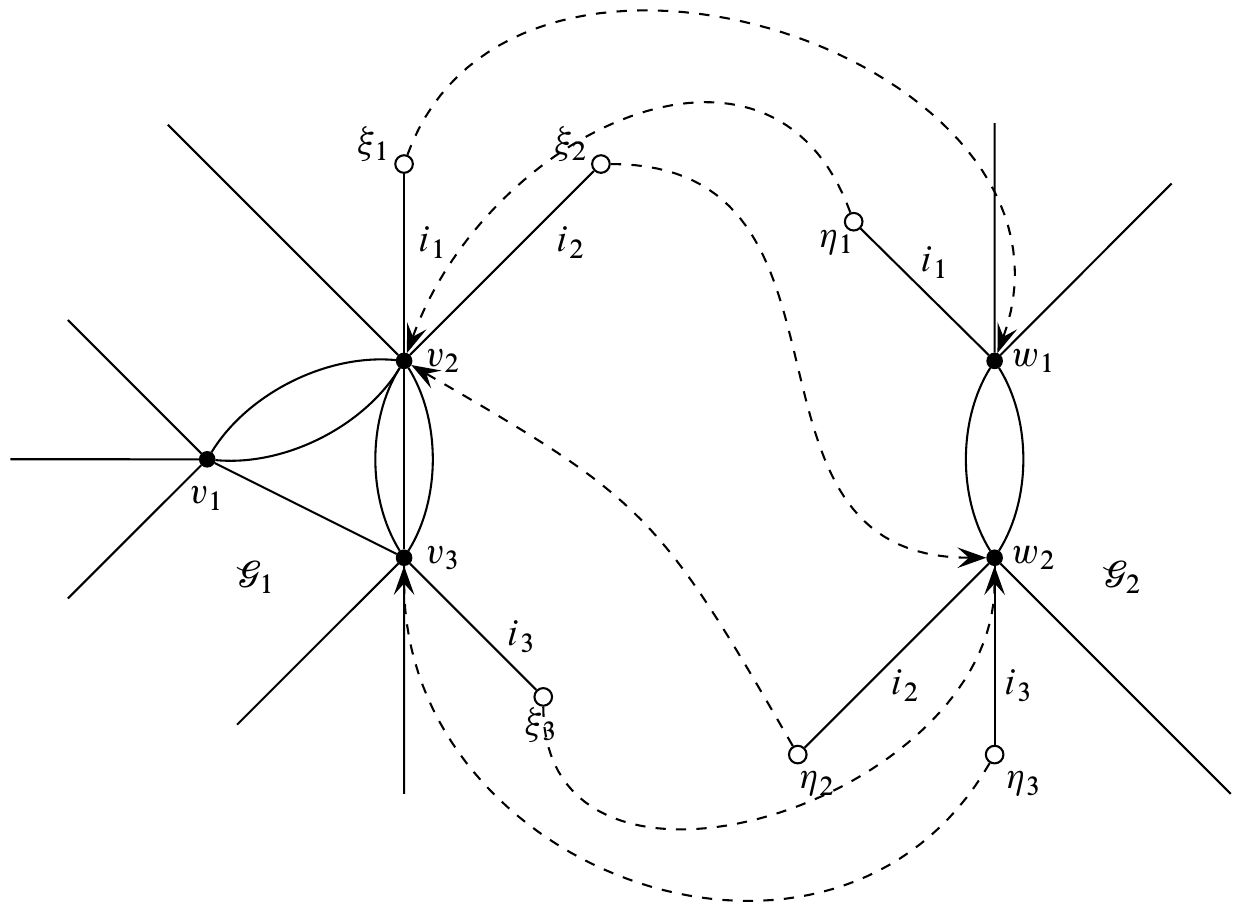}
    \caption{The construction of the process $Y$.} \label{fig5}
\end{center}
\end{figure}

This construction is formalized in the following way. Define
\begin{align*}
    \Xi^1 &= \BigCart_{v\in V_c} \Xi^1_v\\
    \cC^1 &= \bigotimes_{v\in V_c} \cC^1_v\\
    Q^1   &= \bigotimes_{v\in V_c} \mu^1_v,
\end{align*}
and view each of the stochastic processes $Z^1_v$, $v\in V_c$, as well as the random
variables $\tau^1_v$, $\zeta^1_v$, as defined on this product space. Let
\begin{equation*}
    \bigl(\Xi^n,\cC^n,Q^n,Z^n,\tau^n,\zeta^n\bigr),\qquad n\in\N,\,n\ge 2,
\end{equation*}
be a sequence of independent copies of
\begin{equation*}
    \bigl(\Xi^1,\cC^1,Q^1,Z^1,\tau^1,\zeta^1\bigr),
\end{equation*}
where $Z^1=(Z^1_v,\,v\in V_c)$ and similarly for $\tau^1$, $\zeta^1$. Next set
\begin{align*}
    \Xi   &= \BigCart_{n=0}^\infty \Xi^n\\
    \cC   &= \bigotimes_{n=0}^\infty \cC^n\\
    Q_\xi &= Q^0_\xi \otimes\Bigl(\bigotimes_{n=1}^\infty Q^n\Bigr),\qquad \xi\in\cG.
\end{align*}

The procedure sketched above of pasting together pieces of the trajectories of the
various processes $Z^n_v$ is controlled by a Markov chain $(K_n,\,n\in\N)$ which
moves at random times $(S_n,\,n\in\N)$ in the state space $V_c\cup\{\gD\}$. We
set out to construct this chain $\bigl((S_n,K_n),\,n\in\N\bigr)$. Define $S_1=\tau^0$. On
$\{S_1=+\infty\}$, i.e., in the case when $\zeta^0<\tau^0$, set $K_1 = \gD$.
Otherwise define
\begin{equation*}
    K_1 = \gk\bigl(Z^0(\tau^0)\bigr).
\end{equation*}
Observe that since all processes considered have right continuous paths, they are all
measurable stochastic processes, and therefore the evaluation of their time argument at
a random time yields a well-defined random variable. Set $S_2=+\infty$ on
$\{S_1=+\infty\}$, while
\begin{equation*}
    S_2 = S_1 + \tau^1_{K_1}
\end{equation*}
on $\{S_1<+\infty\}$. On $\{S_2=+\infty\}$ put $K_2=\gD$, and on its complement
\begin{equation*}
    K_2 = \gk\bigl(Z^1_{K_1}(\tau^1_{K_1})\bigr).
\end{equation*}
These construction steps are iterated in the obvious way: The
sequence
\begin{equation*}
    \bigl((S_n,K_n),\,n\in\N\bigr)
\end{equation*}
is inductively defined by $S_n=+\infty$ and $K_n=\gD$ on $\{S_{n-1}=+\infty\}$,
while
\begin{align*}
	S_n &= S_{n-1} + \tau^{n-1}_{K_{n-1}},\\[1ex]
	K_n &= \gk\bigl(Z^{n-1}_{K_{n-1}}(\tau^{n-1}_{K_{n-1}})\bigr)
\end{align*}
on $\{S_{n-1}<+\infty\}$.

Note that by construction $K_n = \gD$, $n\in\N$, if and only if $S_n=+\infty$,
and in that case $K_{n'}=\gD$, $S_{n'}=+\infty$ for all $n'\ge n$. Thus
$(+\infty,\gD)$ is a cemetery state for the chain $((S_n,K_n),\,n\in\N)$.

For example with a Borel--Cantelli argument it is not hard to see (cf.\
also~\cite{BMMG0}) that there exists a set $\Xi'\in\cC$ so that for all $\xi\in\cG$,
$Q_\xi(\Xi')=0$, and for all $\go\in\Xi\setminus\Xi'$  the sequence $(S_n(\go),\,n\in\N)$
increases to $+\infty$ in such a way that for all $n\in\N$, $S_n(\go)<S_{n+1}(\go)$ holds
when $S_n(\go)<+\infty$.

Now we are ready to construct $Y=(Y(t),\in\R_+)$. Let $\xi\in\cG =
\hat\cG_1\uplus\hat\cG_2$ be a given starting point, and suppose that $t\in\R_+$ is
given. On $\Xi'$ set $Y(t)=\gD$. On $\Xi\setminus\Xi'$ there is a unique $n\in\N_0$
so that $t\in [S_n, S_{n+1})$, with the convention $S_0=0$. If $t\in [0,S_1)$,
define $Y(t) = Z^0(t)$. If $t\in [S_n, S_{n+1})$ for $n\in\N$, then necessarily
$S_n$ is finite, so that $K_n\in V_c$, and we define
\begin{equation*}
	Y(t) = Z^n_{K_n}(t-S_n).
\end{equation*}
In addition, we make the convention $Y(+\infty)=\gD$. The natural
filtration generated by $Y$ will be denoted by $\cF^Y=(\cF^Y_t,\,t\in\R_+)$.

It follows from the construction of $Y$ that $\gD$ is a cemetery state for $Y$.
Indeed, suppose that $\go\in\Xi\setminus \Xi'$, and that the trajectory
$Y(\,\cdot\,,\go)$ reaches the point $\gD$ at a finite time $\zeta^Y(\go)$. This
implies that there is an $n\in\N_0$ such that $\zeta^Y(\go)\in
[S_n(\go),S_{n+1}(\go))$. Then $S_n(\go)$ is finite, and therefore $K_n(\go)\in V_c$
so that $Y(\,\cdot\,,\go)$ is equal to $Z^n_{K_n}(\,\cdot\,-S_n(\go),\go)$ on the
interval $[S_n(\go),S_{n+1}(\go))$, and this trajectory reaches $\gD$ before hitting
a shadow vertex. Hence $\tau^n_{K_n}=+\infty$ which entails that
$S_{n+1}(\go)=+\infty$. Consequently after $S_n(\go)$ there are no finite
crossover times for this trajectory, and therefore $Y(\,\cdot\,,\go)$ stays
at~$\gD$ forever. Furthermore note that the left limit $Y(\zeta^Y(\go)-,\go)$
at $\zeta^Y(\go)$ belongs to $V_0$.

In terms of the stochastic process $Y$ the random times $S_n$, $n\in\N$, have the
following description. Suppose that $Y$ starts in $\xi\in\cG$. Then $S_1$ is the
hitting time of $V_c$. But if $\xi\in V_c$, then actually it is the hitting time of
$V_c\setminus\{\xi\}$, because it hits a vertex in $V_c$ which corresponds to the
first hitting of a shadow vertex, i.e., a point in $\cG_0$ different from $\xi$,
by $Z^0$. In particular, $S_1>0$. Similarly, $S_n$ is the hitting time of
$V_c\setminus\{K_{n-1}\}$ by $Y$ after time $S_{n-1}$. In appendix~\ref{appA} it
is shown that for every $n\in\N$, $S_n$ is a stopping time with respect to $\cF^Y$.

It follows from its construction that $Y$ is a normal process,
that is, for every $\xi\in\cG$, $Q_\xi(Y(t=0)=\xi)=1$. Furthermore, all paths of $Y$ belong
to $\CD(\R_+,\cG)$. Let $S_V$ be the hitting time of the set of vertices $V$ of
$\cG$ by $Y$. Then $S_V \le S_1$, because $V_c\subset V$ and therefore we find that
$Y(\,\cdot\,\land S_V)$ is pathwise equal to $Z^0(\,\cdot\,\land S^0_V)$, where
$S^0_V$ denotes the hitting time of $V$ by $Z^0$. Suppose that the starting point
$\xi$ belongs to $l^\circ$, $l\in\cI\cup\cE$, and $l$ is isomorphic to the interval
$I$. Then by definition of $Z^0$ (cf.\ definition~\Iref{def_3_1}), the stopped
process $Z^0(\,\cdot\,\land S^0_V)$ is equivalent to a standard Brownian motion on the
interval $I$ with absorption at the endpoint(s) of $I$. Hence the same is true for
$Y$: $Y(\,\cdot\,\land S_V)$ is equivalent to a standard Brownian motion on $I$
with absorption at the endpoint(s) of $I$.

\subsection{Markov property of \boldmath $Y$} \label{ssect2iii}
For any measurable space $(M,\cM)$, $B(M)$ denotes the space of bounded, measurable
functions on $M$. Every $f\in B(\cG^n)$, $n\in\N$, is extended to
$(\cG\cup\{\gD\})^n$ by $f(\xi_1,\dotsc,\xi_n)=0$,
$(\xi_1,\dotsc,\xi_n)\in(\cG\cup\{\gD\})^n$, whenever there is an index
$k\in\{1,\dotsc, n\}$ so that $\xi_k=\gD$. In this subsection we shall prove the
following

\begin{proposition} \label{prop2i}
$Y$ has the simple Markov property: For all $f\in B(\cG)$, $s$, $t\in\R_+$,
$\xi\in\cG$,
\begin{equation}    \label{eq2i}
    E_\xi\bigl(f\bigl(Y(s+t)\bigr)\cond \cF^Y_s\bigr)
        = E_{Y(s)}\bigl(f\bigl(Y(t)\bigr)\bigr)
\end{equation}
holds true $Q_\xi$--a.s.\ on $\{Y(s)\ne\gD\}$.
\end{proposition}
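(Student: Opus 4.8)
The plan is to exploit the concatenated structure of $Y$: on each random interval $[S_n,S_{n+1})$ the process $Y$ runs a single Brownian motion on $\cG_0$ (namely $Z^0$ started at $\xi$ when $n=0$, and $Z^n_{K_n}$ started at $K_n$ when $n\ge 1$), these blocks are mutually independent, and they are spliced together only through the deterministic map $\gk$. First I would decompose the sample space according to which block contains the fixed time $s$. For $n\in\N_0$ set
\[
    A_n=\{S_n\le s<S_{n+1}\},
\]
with the convention $S_0=0$; on $\{Y(s)\ne\gD\}$ these events partition the sample space, and since each $S_n$ is an $\cF^Y$-stopping time (appendix~\ref{appA}) we have $A_n\in\cF^Y_s$. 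On $A_n$ the index $n$, the value $Y(s)$, and (for $n\ge1$) the pair $(S_n,K_n)\in\R_+\times V_c$ are $\cF^Y_s$-measurable, so it suffices to verify~\eqref{eq2i} on each $A_n$ separately.

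Next I would write out the future $(Y(s+t),\,t\in\R_+)$ on $A_n$ in terms of the blocks. For $0\le t<S_{n+1}-s$ one has $Y(s+t)=Z^n_{K_n}\bigl((s-S_n)+t\bigr)$ (with $Z^0$ in place of $Z^n_{K_n}$ when $n=0$); the current block is terminated when it first hits $V_s$, whereupon the next block is spliced on at the vertex $K_{n+1}=\gk\bigl(Z^n_{K_n}(\tau^n_{K_n})\bigr)$, and the fresh independent copies $Z^{n+1},Z^{n+2},\dots$ continue exactly as in the definition of $Y$. Thus the post-$s$ future has precisely the form of the recipe defining $Y$, the sole difference being that its initial block is the part of the active block after time $s-S_n$ rather than a Brownian motion started afresh.

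The crux is to show that, conditionally on $\cF^Y_s$, this initial post-$s$ block is a Brownian motion on $\cG_0$ started at $Y(s)$ and stopped when it hits $V_s$. I would condition on the richer information consisting of the whole blocks $Z^0,\dots,Z^{n-1}$ together with the trajectory of $Z^n_{K_n}$ up to time $s-S_n$; this refines $\cF^Y_s$ on $A_n$, so it is enough to show that the resulting conditional law of the future depends on the conditioning only through $Y(s)$. Now $(S_n,K_n)$ is a function of $Z^0,\dots,Z^{n-1}$ alone and is hence independent of the block $Z^n$; fixing $\{S_n=r,\,K_n=v\}$ makes the elapsed time $s-r$ a constant relative to $Z^n_v$. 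The \emph{simple} Markov property of the Brownian motion $Z^n_v$ (which holds by definition~\Iref{def_3_1}) then shows that $\bigl(Z^n_v((s-r)+u),\,u\in\R_+\bigr)$, conditioned on its position $Y(s)$ at time $s-r$, is a Brownian motion started at $Y(s)$ and independent of the observed initial trajectory; on $\{\tau^n_v>s-r\}$ its first hitting time of $V_s$ equals $\tau^n_v-(s-r)=S_{n+1}-s$. A Fubini/freezing argument, integrating out $r$, $v$ and the observed trajectory, removes the auxiliary randomness, and only simple (not strong) Markovianity of the blocks is needed.

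Finally I would reassemble the pieces. By the previous step the restarted initial block, together with the independent fresh blocks $Z^{n+1},Z^{n+2},\dots$ and the same splicing map $\gk$, is verbatim the construction of $Y$ launched from the point $Y(s)$ (note $Y(s)\in\hat\cG_1\uplus\hat\cG_2\subset\cG_0$ and $Y(s)\notin V_s$, so this is a legitimate starting point). Hence the $Q_\xi$-conditional law of $(Y(s+t),\,t\in\R_+)$ given $\cF^Y_s$ on $A_n$ coincides with the law of $Y$ under $Q_{Y(s)}$, which is exactly the identity~\eqref{eq2i}; summing over $n$ yields the claim on $\{Y(s)\ne\gD\}$. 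I expect the main obstacle to lie in the freezing step: pinning down exactly what $\cF^Y_s$ records about the blocks on $A_n$, justifying the independence of $Z^n$ from $(S_n,K_n)$ at the level of the product space, and verifying that splicing the restarted block onto the fresh blocks via $\gk$ reproduces a genuine copy of the whole process $Y$, not merely agreement of finite-dimensional marginals.
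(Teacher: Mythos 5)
Your plan follows essentially the same route as the paper: partition according to the block $\{S_n\le s<S_{n+1}\}$ straddling $s$ (using that the $S_n$ are $\cF^Y$--stopping times, appendix~\ref{appA}), freeze $(S_n,K_n)$ by Fubini on the product space, restart the active block at the deterministic elapsed time $s-S_n$ via the \emph{simple} Markov property of the underlying Brownian motion, and then argue that the post-$s$ evolution regenerates the construction from $Y(s)$. What the paper adds --- and what your self-flagged ``main obstacle'' amounts to --- is the rigorous execution of exactly these steps: your freezing argument is lemma~\ref{lem2iv} (conditioning on the product-structure filtration $\cB_m$), your restart step is lemma~\ref{lem2iii} (Markov property of $Z^0$ at time $u_r-s$ via the shift operators $\vt$), and your reassembly step is done \emph{not} by identifying the full conditional path law with the law of $Y$ under $Q_{Y(s)}$, but by expanding $f(Y(s+t))$ over the index $n\ge m$ of the block containing $s+t$ and computing each summand with the homogeneous Markov chain $(S_n,K_n)$ and its semigroup $U_n$ together with the operators $R_m$ (lemmas~\ref{lem2ii}, \ref{lem2v}--\ref{lem2vii}, leading to~\eqref{eq2xiii}). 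This finite-dimensional bookkeeping sidesteps your worry about ``a genuine copy of the whole process versus agreement of marginals'': for~\eqref{eq2i} only the one-dimensional marginal of the future is needed, so proving the identity generator by generator of $\cF^Y_s\cap\{S_m\le s<S_{m+1}\}$ suffices. One caution about your phrasing: the post-$s$ process is not quite ``verbatim the construction of $Y$ launched from $Y(s)$.'' If $Y(s)$ lies on a new internal edge $i_k^\circ$, the fixed convention attaches $i_k^\circ$ to $\hat\cG_2$, so under $Q_{Y(s)}$ the initial block runs in $\cG_2$, whereas your actual continuation may be the remainder of a block running in $\cG_1$; the two concatenation schemes have different crossover chains $(S_1,K_1)$ and agree only in the law of the $\cG$-valued process. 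The paper performs the same identification silently (the replacement of $E_{Z^0(u_r-s)}$ by $E_{Y(u_r-s)}$ in the proof of lemma~\ref{lem2iii}), so you are no worse off than the paper here, but a fully careful write-up of either argument would have to reconcile the two conventions explicitly.
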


The proof of proposition~\ref{prop2i} is somewhat technical and lengthy.
Therefore it will be broken up into a sequence of lemmas.

For every $n\in\N$ the probability space $(\Xi,\cC,Q_\xi)$, $\xi\in\cG$, underlying
the construction of the process $Y$ may be written as the product of the probability
spaces $(\Xil, \cCl, \Ql_\xi)$ and $(\Xiu, \cCu, \Qu)$ with
\begin{align*}
	\Xil &= \BigCart_{j=0}^{n-1} \Xi^j,  &
				\Xiu &= \BigCart_{j=n}^{\infty} \Xi^j,\\
	\cCl &= \bigotimes_{j=0}^{n-1} \cC^j, &
				\cCu &= \bigotimes_{j=n}^{\infty} \cC^j,\\
	\Ql_\xi &= Q_\xi^0\otimes\Bigl(\bigotimes_{j=1}^{n-1} Q^j\Bigr), &
				\Qu &= \bigotimes_{j=n}^{\infty} Q^j.
\end{align*}
Introduce a family $\cB=(\cB_n,\,n\in\N_0)$ of sub--$\gs$--algebras of $\cC$ by
setting
\begin{equation*}
    \cB_n = \cC^{\le n-1}\times \Xi^{\ge n}.
\end{equation*}
Obviously, the family $\cB$ forms a filtration. Furthermore, from the construction
of $K_n$ and $S_n$ it is easy to see that the chain $((S_n, K_n),\,n\in\N)$ is
adapted to $\cB$.

First we study the chain $((S_n,K_n),\,n\in\N)$ in more detail. Recall our
convention that $S_0=0$. We set $\cB_0=\{\emptyset,\Xi\}$, and under the law
$Q_v$, $v\in V_c$, we put $K_0=v$. $g\in\bRV$ is extended to $\Rbp\times
\bigl(V_c\cup\{\gD\}\bigr)$ by $g(+\infty,\,\cdot\,)=g(\,\cdot\,,\gD)
=g(+\infty,\gD)=0$. For $g\in\bRV$, $n\in\N_0$, define
\begin{equation}    \label{eq2ii}
    (U_n g)(s,v) = E_{v}\bigl(g(s+S_n, K_n)\bigr),\qquad s\in\R_+,\,v\in V_c.
\end{equation}
Note that $U_0=\text{id}$, and that for every $g\in\bRV$ and all $n\in\N$,
$U_n g\in\bRV$. In particular, the convention mentioned above applies to $U_n g$,
too.

\begin{lemma}   \label{lem2ii}
{\ }
\begin{enum_a}
    \item For all $m$, $n\in\N$, $m\le n$,  $\xi\in\cG$, $s\ge 0$, and every
        $g\in\bRV$ the following formula holds true $Q_\xi$--a.s.\
            \begin{equation}    \label{eq2iii}
                E_\xi\bigl(g(s+S_n, K_n) \cond \cB_m)\bigr)
                    = (U_{n-m}g)(s+S_m,K_m).
            \end{equation}
    \item $(U_n,\,n\in\N_0)$ forms a semigroup of linear maps on $\bRV$. In
        particular, for all $(s,v)\in\R_+\times V_c$ under $Q_{v}$ the chain
        $\bigl((s+S_n, K_n),\,n\in\N_0\bigr)$ is a homogeneous Markov chain with
        transition kernel
            \begin{equation*}
                P\bigl((s,v), A\bigr)
                    = Q_{v}\bigl((s+S_1,K_1)\in A\bigr),\qquad A\in\cB(\R_+\times V_c).
            \end{equation*}
\end{enum_a}
\end{lemma}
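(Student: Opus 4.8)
The plan is to establish part (a) by a shift-and-independence argument and then to read off part (b) from it via the tower property. The structural facts I will lean on are already visible in the construction. First, each pair $(S_n,K_n)$ is $\cB_n$-measurable, i.e.\ a function of the coordinates $\Xi^0,\dotsc,\Xi^{n-1}$ only, so the chain is $\cB$-adapted (as already noted). Second, the hypothesis $m\in\N$ guarantees that on $\{S_m<+\infty\}$ one has $K_m=\gk(Z^{m-1}_{K_{m-1}}(\tau^{m-1}_{K_{m-1}}))\in V_c$, a genuine vertex; and the defining recursion then expresses the continuation $(S_{m+k}-S_m,K_{m+k})_{k\ge 0}$ as the \emph{same} measurable functional of the shifted family $(\Xi^m,\Xi^{m+1},\dotsc)$ that produces $(S_k,K_k)_{k\ge 0}$ from $(\Xi^0,\Xi^1,\dotsc)$, now started from the vertex $K_m$ rather than from $\xi$. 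Third, the shifted family depends only on $\Xi^{\ge m}$ and is therefore independent of $\cB_m=\cC^{\le m-1}\times\Xi^{\ge m}$ under $Q_\xi$.

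The main obstacle is to make the second and third facts precise simultaneously. For a fixed $v\in V_c$ I would introduce, for each $k\ge 0$, the measurable map $\Phi^{(v)}_k$ on $\Xi^{\ge m}$ obtained by running the recursion with $Z^m_v,Z^{m+1}_{\,\cdot\,},\dotsc$ started at $v$, so that on $\{S_m<+\infty,\,K_m=v\}$ one has $(S_n-S_m,K_n)=\Phi^{(v)}_{n-m}(\Xi^m,\Xi^{m+1},\dotsc)$ as a pair in $\R_+\times(V_c\cup\{\gD\})$. The delicate point is the \emph{distributional identity}
\begin{equation*}
    \Phi^{(v)}_{n-m}(\Xi^m,\Xi^{m+1},\dotsc)\ \text{under }Q_\xi\quad\text{has the same law as}\quad (S_{n-m},K_{n-m})\ \text{under }Q_v.
\end{equation*}
This is exactly where the design of the construction enters: the first step of $\Phi^{(v)}$ is driven by $Z^m_v$, which by its very definition is a version of $Z^0$ started at $v$, so that $(\tau^m_v,Z^m_v(\tau^m_v))$ has the same law as $(\tau^0,Z^0(\tau^0))$ under $Q^0_v$; the subsequent steps are driven by $\Xi^{m+1},\Xi^{m+2},\dotsc$, which are i.i.d.\ copies of $\Xi^1$ and thus match $\Xi^1,\Xi^2,\dotsc$ under $Q_v$. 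Combining this with the independence of the factors yields the identity.

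With these ingredients the conditional expectation is handled by the independence (``freezing'') lemma. On the $\cB_m$-measurable event $\{S_m=+\infty\}$ both sides of \eqref{eq2iii} vanish, by the conventions $g(+\infty,\,\cdot\,)=0$ and $K_m=\gD$. On $\{S_m<+\infty\}$ I would test against an arbitrary $\cB_m$-measurable set $B\subset\{S_m<+\infty,\,K_m=v\}$: since $\mathbf 1_B$ and $S_m$ depend only on $\Xi^{\le m-1}$ while $\Phi^{(v)}_{n-m}$ depends only on $\Xi^{\ge m}$, Fubini factorizes $E_\xi(g(s+S_n,K_n)\mathbf 1_B)$, and the inner integral over $\Xi^{\ge m}$ equals, by the distributional identity, $E_v\bigl(g(s+\gs+S_{n-m},K_{n-m})\bigr)=(U_{n-m}g)(s+\gs,v)$ evaluated at $\gs=S_m$. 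Summing over the finitely many $v\in V_c$ gives \eqref{eq2iii} $Q_\xi$-a.s.

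Finally, part (b) follows from (a). Taking $\xi=v$ in \eqref{eq2iii} and applying $E_v$ on both sides gives, by the tower property, $(U_ng)(s,v)=E_v\bigl((U_{n-m}g)(s+S_m,K_m)\bigr)=\bigl(U_m(U_{n-m}g)\bigr)(s,v)$, so $U_n=U_mU_{n-m}$ for $1\le m\le n$; together with $U_0=\text{id}$ this is the semigroup property. The special case $n=m+1$ reads $E_v(g(s+S_{m+1},K_{m+1})\cond\cB_m)=(U_1g)(s+S_m,K_m)$, and since $(U_1g)(s,v)=\int g\,dP((s,v),\cdot)$ with $P((s,v),A)=Q_v((s+S_1,K_1)\in A)$, while $\sigma(S_0,K_0,\dotsc,S_m,K_m)\subset\cB_m$ and the chain is $\cB$-adapted, the process $((s+S_n,K_n),\,n\in\N_0)$ is under $Q_v$ a homogeneous Markov chain with the stated transition kernel $P$.
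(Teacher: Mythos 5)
Your proof is correct, but it takes a genuinely different route from the paper's. The paper first proves \eqref{eq2iii} only in the one-step case $m=n-1$: testing against $\gL\cap\{K_{n-1}=v\}$, it factorizes the expectation over the product space, and the inner integral over the tail coordinates involves only $\bigl(\tau^{n-1}_v,\gk(Z^{n-1}_v(\tau^{n-1}_v))\bigr)$, which has the same law as $(S_1,K_1)$ under $Q_v$; it then deduces $U_n=U_{n-1}\comp U_1$, hence the semigroup property, and finally obtains the general case $m\le n$ by iterated one-step conditioning through $\cB_{n-1},\cB_{n-2},\dotsc,\cB_m$, using the semigroup law as an \emph{input}. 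You instead prove the general case in one shot: you package the whole continuation $(S_{m+k}-S_m,K_{m+k})_{k\ge 0}$ on $\{K_m=v\}$ as a functional $\Phi^{(v)}$ of $\Xi^{\ge m}$, establish the multi-step renewal identity that $\Phi^{(v)}_{n-m}$ has the law of $(S_{n-m},K_{n-m})$ under $Q_v$, and apply a single freezing/Fubini argument; the semigroup property then falls out of (a) by the tower property as a genuine corollary (which, incidentally, is cleaner than the paper's terse appeal to Fubini for the general relation $U_{n+m}=U_n\comp U_m$). Both routes rest on the same two facts --- the product structure making $\cB_m$ independent of the coordinates $\Xi^{\ge m}$, and the equality in law of $Z^m_v$ with $Z^0$ under $Q^0_v$ --- but yours invokes the stronger, whole-tail version of the identity, whose rigorous verification (measurability of the maps $\Phi^{(v)}_k$ and identification of the law of the full shifted driving sequence) is exactly the bookkeeping the paper's one-step-plus-induction scheme avoids; you correctly flag this as the delicate point, and the justification you give (that $Z^m_v$ is by construction a version of $Z^0$ started at $v$, and that $\Xi^{m+1},\Xi^{m+2},\dotsc$ are i.i.d.\ copies of $\Xi^1$) is the right one. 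In short: your approach buys a more natural logical order and a single conceptual step, at the cost of a heavier measurable-functional construction; the paper trades that for elementary steps repeated inductively.
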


\begin{proof}
For $m=n$ formula~\eqref{eq2iii} is trivial. Consider the case when $n\ge 2$, $m=n-1$.
Let $\gL\in\cB_{n-1}$, $v\in V_c$, and put $\gL_v = \gL\cap\{K_{n-1}=v\}\in\cB_{n-1}$.
From the construction of $S_n$ and $K_n$
\begin{align*}
    E_\xi\bigl(&g(s+S_n,K_n); \gL_v\bigr)\\[1ex]
        &\hspace{-1.5em}= \int_{\Xil[n-2]} 1_{\gL_v}
            \Bigl(\int_{\Xiu[n-1]}g\bigl(s+S_{n-1}+\tau^{n-1}_v,
                    \gk\bigl(Z^{n-1}_v(\tau^{n-1}_v)\bigr)\bigr)\,d\Qu[n-1]\Bigr)\,d\Ql[n-2]_\xi\\[1ex]
        &\hspace{-1.5em}= \int_{\Xil[n-2]} 1_{\gL_v}
            \Bigl(\int_{\Xi^0}g\bigl(s+u+\tau^0,
                    \gk\bigl(Z^0(\tau^0)\bigr)\bigr)\,dQ^0_v\Bigr)\Eval_{u=S_{n-1}}\,d\Ql[n-2]_\xi\\[1ex]
        &\hspace{-1.5em}=E_\xi\Bigl(E_v\bigr(g(s+u+S_1,K_1)\bigr)\eval_{u=S_{n-1}};\gL_v\Bigr)\\[1ex]
        &\hspace{-1.5em}=E_\xi\Bigl(E_{K_{n-1}}\bigr(g(s+u+S_1,K_1)\bigr)\eval_{u=S_{n-1}};\gL_v\Bigr)\\[1ex]
        &\hspace{-1.5em}=E_\xi\Bigl( (U_1 g)(s+S_{n-1},K_{n-1});\gL_v\Bigr),
\end{align*}
where in the last step we used definition~\eqref{eq2ii}. If in the preceding calculation
we replace the event $\{K_{n-1}=v\}$ by $\{K_{n-1}=\gD\}$, we get zero on both sides because
$K_{n-1}=\gD$ implies $K_n=\gD$ (see subsection~\ref{ssect2ii}). Thus summation over
$v\in V_c$ gives
\begin{equation*}
    E_\xi\bigl(g(s+S_n,K_n);\gL\bigr)
        = E_\xi\bigl((U_1 g)(s+S_{n-1}, K_{n-1});\gL\bigr),
\end{equation*}
and equation~\eqref{eq2iii} is proved for the case where $n\ge 2$ and $m=n-1$. As a
consequence we get
\begin{align*}
    (U_n g)(s,v)
        &= E_v\bigl(E_v\bigl(g(s+S_n,K_n)\cond \cB_{n-1}\bigr)\bigr)\\
        &= E_v\bigl((U_1 g)(s+S_{n-1},K_{n-1})\bigr)\\
        &= \bigl(U_{n-1}\comp U_1 g\bigr)(s,v).
\end{align*}
Now the general semigroup relation $U_{n+m}=U_n\comp U_m$, $m$, $n\in\N_0$, follows
by an application of Fubini's theorem.

Finally we show formula~\eqref{eq2iii} in the general case:
\begin{align*}
    E_\xi\bigl(g(s+S_n,K_n)&\cond \cB_m\bigr)\\
        &= E_\xi\bigl(E_\xi\bigl(g(s+S_n,K_n)\cond \cB_{n-1}\bigr)\cond \cB_m\bigr)\\
        &= E_\xi\bigl((U_1 g)(s+S_{n-1},K_{n-1})\cond\cB_m\bigr)\\
        &=\quad \dots \quad=\\
        &= E_\xi\bigl((U_1\comp\dotsb\comp U_1 g)(s+S_m,K_m)\cond\cB_m\bigr)\\
\end{align*}
where the multiple composition in the last expression involves $n-m$ operators $U_1$.
The semigroup property of $(U_n,\,n\in\N_0)$ implies formula~\eqref{eq2iii}.
\end{proof}

It will be useful to introduce some additional notation. For $r\in\N$, let
$\Rrp$ denote the set of all increasingly ordered $r$--tuples with entries in
$\R_+$. If $u\in\Rrp$ and $s\in\R$ we set $u+s=(u_1+s,\dotsc, u_r+s)\in\R^r$. $u<s$ means
that $u_i<s$ for all $i=1$, \dots, $r$ or equivalently $u_r<s$. The relations
$u>s$, $u\le s$, and $u\ge s$ are defined analogously. In particular,
when $s\le u$, then $u-s\in\Rrp$. For $r$, $q\in\N$, and $u\in\Rrp$, $w\in\Rrp[q]$
with $u\le w_1$, define
\begin{equation*}
    (u,w) = (u_1,\dotsc, u_r,w_1,\dotsc,w_q)\in\Rrp[r+q].
\end{equation*}
Furthermore, $Y(u)$ stands for $(Y(u_1),\dotsc, Y(u_r))$, and similarly for $Z^n_v(u)$,
$n\in\N_0$, $v\in V_c$.

In the sequel we shall consider random variables $W_m(h,g,u)$ of the following form
\begin{equation}    \label{eq2iv}
    W_m(h,g,u) = h\bigl(Y(u)\bigr)\,\chi_m(u)\,g(S_{m+1},K_{m+1}),
\end{equation}
where $m\in\N$, $h$ belongs to $\bcBG$,  $r\in \N$, $g$ to $\bRV$, and $u\in\Rrp$. Here
we have set
\begin{equation*}
    \chi_m(u) = 1_{\{S_m\le u <S_{m+1}\}}.
\end{equation*}
For $s\ge 0$ with $s\le u$ define
\begin{equation}    \label{eq2v}
    W_{m,s}(h,g,u)
        = h\bigl(Y(u-s)\bigr)\,\chi_m(u-s)\,g(s+S_{m+1},K_{m+1}),
\end{equation}
so that $W_{m,s=0}(h,g,u)=W_m(h,g,u)$. Moreover set
\begin{equation}    \label{eq2vi}
    R_m(h,g,u)(s,v) = E_v\bigl(W_{m,s}(h,g,u)\bigr),\qquad s\in\R_+,\,s\le u,\,v\in V_c.
\end{equation}
For the following it will be convenient to let $W_{m,s}(h,g,u)$ and $R_m(h,g,u)(s,v)$,
$v\in V_c$, be defined for all $s\in\R_+$. To this end we make the convention that
$Y(t)=\gD$ for all $t<0$. Then by $W_{m,s}(h,1,u)=W_m(h,1,u-s)$ the following formula
\begin{equation}    \label{eq2via}
    R_m(h,1,u)(s+t,v) = R_m(h,1,u-s)(t,v)
\end{equation}
holds for all $m\in\N$, $h\in B(\cG^r)$, $r\in\N$, $u\in\Rrp$, $s$, $t\in\R_+$,
$v\in V_c$.

Suppose that $r$, $q\in\N$, and that $h\in\bcBG$, $f\in\bcBG[q]$. Then $h\otimes f$
denotes the function in $B(\cG^{r+q})$ given by
\begin{equation}    \label{eq2vib}
\begin{split}
    h\otimes f&(\eta_1,\dotsc,\eta_{r+q})\\
        &= h(\eta_1,\dotsc,\eta_r)\,f(\eta_{r+1},\dotsc,\eta_{r+q}),\qquad
                    (\eta_1,\dotsc,\eta_{r+q})\in\cG^{r+q}.
\end{split}
\end{equation}

\begin{lemma}   \label{lem2iii}
Suppose that $r\in\N$, $u\in\Rrp$, and that $h\in\bcBG$.
\begin{enum_a}
    \item If $q\in\N$, $w\in \Rrp[q]$ with $u_r\le w$, and
            $f\in\bcBG[q]$, then
            \begin{subequations}     \label{eq2vii}
            \begin{equation}    \label{eq2viia}
                R_0\bigl(h\otimes f, 1, (u,w)\bigr)= R_0\big(M(f,w,u_r)h, 1,u\bigr)
            \end{equation}
            holds true, where $M(f,w,s)h\in B(\cG^r)$ is given by
            \begin{equation}    \label{eq2viib}
                \bigl(M(f,w,s)h\bigr)(\eta) = h(\eta)\,E_{\eta_r}\bigl(W_{0,s}(f,1,w)\bigr),
                                \quad \eta\in\cG^r,\,0\le s\le w.
            \end{equation}
            \end{subequations}
    \item If $g\in\bRV$, then
            \begin{subequations}     \label{eq2viii}
            \begin{equation}    \label{eq2viiia}
                R_0\bigl(h,g,u\bigr) = R_0\bigl(N(g,u_r)h,1,u\bigr)
            \end{equation}
            holds, where $N(g,s)h\in B(\cG^r)$ is given by
            \begin{equation}    \label{eq2viiib}
                \bigl(N(g,s) h\bigr)(\eta)= h(\eta)\,E_{\eta_r}\bigl(g(s+S_1,K_1)\bigr),
                                            \qquad \eta\in\cG^r,\,s\ge 0.
            \end{equation}
            \end{subequations}
\end{enum_a}
\end{lemma}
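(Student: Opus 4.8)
The plan is to evaluate both sides of \eqref{eq2viia} and \eqref{eq2viiia} pointwise in $(s,v)\in\R_+\times V_c$ as $E_v$--expectations of functionals of the given Brownian motion $Z^0$, and then to invoke the \emph{simple} Markov property of $Z^0$ at the \emph{deterministic} time $t_0=u_r-s$, where $s$ is the time--shift argument of $R_0$. The observation used throughout is that under $Q_v$ the path of $Y$ coincides with that of $Z^0$ on the stochastic interval $[0,S_1)=[0,\tau^0)$, while the cut--off $\chi_0(\,\cdot\,)=1_{\{0\le\,\cdot\,<S_1\}}$ confines every time argument to this interval. Hence in each $W_{0,s}$ (see \eqref{eq2v}, \eqref{eq2vi}) one may replace $Y$ by $Z^0$, with $S_1=\tau^0$ and $K_1=\gk\bigl(Z^0(\tau^0)\bigr)$, and one may assume $t_0=u_r-s\ge0$: if $t_0<0$ then $Z^0(u_r-s)=\gD$ by the convention $Y(t)=\gD$ for $t<0$, so that $h\otimes f$, $M(f,w,u_r)h$ and $N(g,u_r)h$ all vanish (each carries the factor $h$, which is $0$ as soon as one argument is $\gD$) and both sides are zero. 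The single analytic input beyond this bookkeeping is the hitting--time decomposition
\begin{equation*}
    \tau^0 = t_0 + \tau^0\comp\vt_{t_0},\qquad
    \gk\bigl(Z^0(\tau^0)\bigr) = K_1\comp\vt_{t_0}
    \qquad\text{on }\{\tau^0>t_0\},
\end{equation*}
valid because $V_s$ is a finite set of points and $Z^0$ has right--continuous paths.

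For part~(a), writing out $(h\otimes f)\bigl(Y((u,w)-s)\bigr)=h(Z^0(u-s))\,f(Z^0(w-s))$ on $\{w_q-s<\tau^0\}$, I would note that $h(Z^0(u-s))$ and the indicator $1_{\{t_0<\tau^0\}}$ are $\cF^0_{t_0}$--measurable, while on $\{t_0<\tau^0\}$ the remaining factor shifts as
\begin{equation*}
    f\bigl(Z^0(w-s)\bigr)\,1_{\{w_q-s<\tau^0\}}
      = \Bigl(f\bigl(Z^0(w-u_r)\bigr)\,1_{\{w_q-u_r<\tau^0\}}\Bigr)\comp\vt_{t_0},
\end{equation*}
since $w_j-s=(w_j-u_r)+t_0$ and $w_q-s-t_0=w_q-u_r$. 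Applying the simple Markov property at $t_0$ turns the conditional expectation of the shifted functional into $E_{Z^0(t_0)}\bigl(f(Z^0(w-u_r))\,1_{\{w_q-u_r<\tau^0\}}\bigr)=E_{Z^0(t_0)}\bigl(W_{0,u_r}(f,1,w)\bigr)$, the last equality being again the $Y=Z^0$ identification. As $Z^0(t_0)$ is exactly the last coordinate $\eta_r$ of $\eta=Z^0(u-s)$, this is precisely the factor multiplying $h(\eta)$ in \eqref{eq2viib}; thus the integrand becomes $(M(f,w,u_r)h)\bigl(Z^0(u-s)\bigr)\,1_{\{t_0<\tau^0\}}$, whose $E_v$--expectation is by definition $R_0\bigl(M(f,w,u_r)h,1,u\bigr)(s,v)$.

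Part~(b) is the same computation with $f$ replaced by the factor $g(s+S_1,K_1)$. Here the hitting--time decomposition gives, on $\{t_0<\tau^0\}$,
\begin{equation*}
    g\bigl(s+\tau^0,\gk(Z^0(\tau^0))\bigr)
      = g\bigl(u_r+\tau^0\comp\vt_{t_0},\,K_1\comp\vt_{t_0}\bigr)
      = \bigl(g(u_r+S_1,K_1)\bigr)\comp\vt_{t_0},
\end{equation*}
using $s+\tau^0=u_r+\tau^0\comp\vt_{t_0}$. The simple Markov property at $t_0$ then produces the factor $E_{Z^0(t_0)}\bigl(g(u_r+S_1,K_1)\bigr)=E_{\eta_r}\bigl(g(u_r+S_1,K_1)\bigr)$ multiplying $h$, which is exactly \eqref{eq2viiib} with third argument $u_r$; taking $E_v$ yields $R_0\bigl(N(g,u_r)h,1,u\bigr)(s,v)$ and hence \eqref{eq2viiia}.

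The only genuinely delicate point is the bookkeeping of indicators and shifts: one must factor $1_{\{w_q-s<\tau^0\}}=1_{\{t_0<\tau^0\}}\,1_{\{w_q-s<\tau^0\}}$ (legitimate because $t_0=u_r-s\le w_q-s$) to separate an $\cF^0_{t_0}$--measurable part from a part expressible through $\vt_{t_0}$, and one must check that the degenerate ranges of $s$ (where $u_1-s<0\le u_r-s$, so that $h$ already vanishes) are handled consistently on both sides. Everything else is a direct application of the simple Markov property of $Z^0$ at a deterministic time; in particular no use of the strong Markov property, nor of the not--yet--established Markov property of $Y$ in Proposition~\ref{prop2i}, is needed.
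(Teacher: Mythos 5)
Your proof is correct and follows essentially the same route as the paper's: both evaluate $R_0$ pointwise as an $E_v$--expectation of a functional of $Z^0$ on $[0,\tau^0)$, use the decomposition $\tau^0=(u_r-s)+\tau^0\comp\vt_{u_r-s}$ together with $\gk(Z^0(\tau^0))=K_1\comp\vt_{u_r-s}$ on $\{\tau^0>u_r-s\}$, and apply the simple Markov property of $Z^0$ at the deterministic time $u_r-s$. The only difference is that you carry out part~(a) explicitly and treat the degenerate ranges of $s$, whereas the paper proves only~(b) and declares~(a) similar.
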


\begin{proof}
Both statements follow from the Markov property of the Brownian motion $Z^0$ on
$\cG_0$ underlying the construction of $Y$. We only prove statement~(b), the
proof of~(a) is similar and therefore omitted. Using the definition of $R_0$ and the
construction of $Y$, we compute for $s\in\R_+$, $v\in V_c$, as follows:
\begin{align*}
    R_0\bigl(h,g,u\bigr)(s,v)
        &= E_v\bigl(h\bigl(Y(u-s)\bigr)\,\chi_0(u-s)\,g(s+S_1,K_1)\bigr)\\
        &= E_v\bigl(h\bigl(Z^0(u-s)\bigr)\,1_{\{0\le u-s <\tau^0\}}\,
                g\bigl(s+\tau^0,\gk(Z^0(\tau^0))\bigr)\bigr).
\end{align*}
Recall that $\cF^0$ denotes the natural filtration of $Z^0$, and $\vt$ is a family
of shift operators for $Z^0$. It follows from the definition of the stopping time
$\tau^0$ and the path properties of $Z^0$, that on $\{\tau^0\ge u_r-s\}$ the
relation $\tau^0 = u_r-s + \tau^0\comp\vt_{u_r-s}$ holds true. Moreover, it is
easy to check that on this event we have $Z^0(\tau^0)=Z^0(\tau^0)\comp\vt_{u_r-s}$.
Therefore
\begin{align*}
    R_0\bigl(h,g,u\bigr)(s,v)
        &= E_v\Bigl(h\bigl(Z^0(u-s)\bigr)\,1_{\{0\le u-s <\tau^0\}}\\
        &\hspace{2em} \times E_v\bigl(g(u_r+\tau^0,\gk(Z^0(\tau^0)))
                            \comp\vt_{u_r-s}\cond \cF^0_{u_r-s}\bigr)\Bigr)\\
        &= E_v\Bigl(h\bigl(Z^0(u-s)\bigr)\,1_{\{0\le u-s <\tau^0\}}\\
        &\hspace{2em} \times E_{Z^0(u_r-s)}\bigl(g(u_r+\tau^0,\gk(Z^0(\tau^0)))\bigr)\Bigr)\\
        &= E_v\Bigl(h\bigl(Y(u-s)\bigr)\,\chi_0(u-s)\,
            E_{Y(u_r-s)}\bigl(g(u_r+S_1,K_1)\bigr)\Bigr)\\
        &= R_0\bigl(N(g,u_r)h,1,u\bigr)(s,v),
\end{align*}
and the proof is concluded.
\end{proof}

\begin{lemma}   \label{lem2iv}
For all $m$, $r\in\N$, $h\in\bcBG$, $g\in\bRV$, $u\in\Rrp$, $\xi\in\cG$, the formula
\begin{equation}    \label{eq2ix}
    E_\xi\bigl(W_m(h,g,u)\cond \cB_m\bigr)
        = R_0(h,g,u)(S_m,K_m)
\end{equation}
holds $Q_\xi$--a.s.
\end{lemma}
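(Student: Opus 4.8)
The plan is to condition on $\cB_m$ and to exploit the product structure $\cB_m=\cC^{\le m-1}\times\Xi^{\ge m}$. Since $S_m$ and $K_m$ are built from the copies $Z^0,\dotsc,Z^{m-1}$, they are $\cB_m$--measurable, whereas the copy $Z^m=(Z^m_v,\,v\in V_c)$ lives on the factor $\Xi^m$ and is therefore independent of $\cB_m$, each $Z^m_v$ having the law of $Z^0$ under $Q_v$. First I would split according to the value of $K_m$, all of the relevant events lying in $\cB_m$. On $\{K_m=\gD\}=\{S_m=+\infty\}$ both sides vanish: as $S_m=+\infty>u_r$ we have $\chi_m(u)=0$, so $W_m(h,g,u)=0$, while the conventions on $g$ and on the extension of functions to $(+\infty,\gD)$ give $R_0(h,g,u)(+\infty,\gD)=0$.

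On each event $\{K_m=v\}$ with $v\in V_c$, I would rewrite $W_m$ using the construction of $Y$: there $S_{m+1}=S_m+\tau^m_v$, $K_{m+1}=\gk(Z^m_v(\tau^m_v))$, and $Y(t)=Z^m_v(t-S_m)$ for $t\in[S_m,S_{m+1})$, whence
\begin{equation*}
    W_m(h,g,u)\,1_{\{K_m=v\}}
    = 1_{\{K_m=v\}}\,h\bigl(Z^m_v(u-S_m)\bigr)\,1_{\{0\le u-S_m<\tau^m_v\}}\,
        g\bigl(S_m+\tau^m_v,\gk(Z^m_v(\tau^m_v))\bigr).
\end{equation*}
Here $S_m$ and $1_{\{K_m=v\}}$ are $\cB_m$--measurable while $Z^m_v$ (hence $\tau^m_v$) is independent of $\cB_m$, so the standard substitution rule for conditional expectations applies: freezing $S_m=s'$ and integrating out $Z^m_v$ gives, on $\{K_m=v\}$,
\begin{equation*}
    E_\xi\bigl(W_m(h,g,u)\cond\cB_m\bigr)
    = E_v\bigl(h(Z^0(u-s'))\,1_{\{0\le u-s'<\tau^0\}}\,
        g(s'+\tau^0,\gk(Z^0(\tau^0)))\bigr)\eval_{s'=S_m},
\end{equation*}
where I have used that $Z^m_v$ has the law of $Z^0$ under $Q_v$ and that $\tau^m_v$ corresponds to $\tau^0$.

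It remains to identify the right-hand side with $R_0(h,g,u)(S_m,K_m)$. By the computation opening the proof of Lemma~\ref{lem2iii}, the expression in parentheses is precisely $R_0(h,g,u)(s',v)$, since under $Q_v$ the process $Y$ starts at $v$, coincides with $Z^0$ up to $S_1=\tau^0$, and has $K_1=\gk(Z^0(\tau^0))$. Summing over $v\in V_c$ and adjoining the vanishing contribution of $\{K_m=\gD\}$ then yields the claimed identity $Q_\xi$--a.s. The hard part will be the clean application of the freezing step: $S_m$ enters simultaneously in the time argument $u-S_m$, in the indicator $\{0\le u-S_m<\tau^m_v\}$, and in the shift $S_m+\tau^m_v$ inside $g$, so one must lean squarely on the independence of $Z^m$ from $\cB_m$ provided by the product structure in order to treat $S_m$ as a frozen parameter under the conditional expectation.
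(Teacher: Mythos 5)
Your proposal is correct and follows essentially the same route as the paper: the paper verifies \eqref{eq2ix} by integrating against sets $\gL_v=\gL\cap\{K_m=v\}\in\cB_m$ and using Fubini on the product space $\Xi^{\le m-1}\times\Xi^{\ge m}$ together with the fact that $Z^m_v$ has the law of $Z^0$ under $Q^0_v$, which is exactly the explicit form of the freezing argument you invoke, including the same splitting over $K_m=v$ and the vanishing of both sides on $\{K_m=\gD\}$. Your identification of the frozen expectation with $R_0(h,g,u)(s',v)$ via the opening computation in the proof of Lemma~\ref{lem2iii} is also how the paper closes the argument, so there is no gap.
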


\begin{proof}
Observe that both side of equation~\eqref{eq2ix} vanish on the set $\{K_m=\gD\}$.
Let $\gL\in\cB_m$, $v\in V_c$, and set $\gL_v=\gL\cap\{K_m=v\}\in\cB_m$. Then
\begin{align*}
    E_\xi\bigl(&W_m(h,g,u);\gL_v\bigr)\\
        &= \int_{\Xil[m-1]} 1_{\gL_v}\, \Bigl(\int_{\Xiu[m]} h\bigl(Y(u)\bigr)\,
                    1_{\{S_m\le u<S_{m+1}\}}\\
        &\hspace{7em} \times g(S_{m+1},K_{m+1})\,d\Qu[m]\Bigr)\,d\Ql[m-1]_\xi\\
        &= \int_{\Xil[m-1]} 1_{\gL_v}\, \Bigl(\int_{\Xi^m} h\bigl(Z^m_v(u-s)\bigr)\,
                    1_{\{0\le u-s < \tau^m_v\}}\\
        &\hspace{7em} \times g\bigl(s+\tau^m_v,\gk(Z^m_v(\tau^m_v))\bigr)\,dQ^m\Bigr)
                \Eval_{s=S_m}\,d\Ql[m-1]_\xi\\
        &= \int_{\Xil[m-1]} 1_{\gL_v}\, \Bigl(\int_{\Xi^0} h\bigl(Z^0(u-s)\bigr)\,
                    1_{\{0\le u-s < \tau^0\}}\\
        &\hspace{7em} \times g\bigl(s+\tau^0,\gk(Z^0(\tau^0))\bigr)\,dQ^0_v\Bigr)
                \Eval_{s=S_m}\,d\Ql[m-1]_\xi\\
        &= E_\xi\Bigl(E_v\bigl(h\bigl(Y(u-s)\bigr)\,\chi_0(u-s)\,g(s+S_1,K_1)\bigr)
            \eval_{s=S_m};\,\gL_v\Bigr)\\
        &= E_\xi\Bigl(E_{K_m}\bigl(h\bigl(Y(u-s)\bigr)\,\chi_0(u-s)\,g(s+S_1,K_1)\bigr)
            \eval_{s=S_m};\,\gL_v\Bigr)\\
        &= E_\xi\bigl(R_0(h,g,u)(S_m,K_m);\,\gL_v\bigr).
\end{align*}
Summation over $v\in V_c$ finishes the proof.
\end{proof}

\begin{lemma}   \label{lem2v}
For all $m$, $r\in\N$, $u\in\Rrp$, $h\in\bcBG$, $g\in\bRV$,
\begin{equation}    \label{eq2x}
    R_m(h,g,u)(s,v) = \bigl(U_m R_0(h,g,u)\bigr)(s,v),\qquad s\in\R_+,\,v\in V_c,
\end{equation}
holds.
\end{lemma}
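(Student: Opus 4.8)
The plan is to reduce $R_m$ to $R_0$ by conditioning on $\cB_m$ via lemma~\ref{lem2iv}, and then to recognize the resulting expectation as the operator $U_m$ applied to the function $(s,v)\mapsto R_0(h,g,u)(s,v)$, which indeed lies in $\bRV$ since $h$ and $g$ are bounded and measurable. The key mechanism throughout is the ``absorption of the time shift into the last argument $g$''.

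First I would record the elementary identity
$$W_{m,s}(h,g,u) = W_m\bigl(h,\, g(s+\,\cdot\,,\,\cdot\,),\, u-s\bigr),$$
valid whenever $u-s\in\Rrp$ (i.e.\ $s\le u$): this is immediate from the definitions~\eqref{eq2iv} and~\eqref{eq2v}, since replacing $g$ by $\tilde g:=g(s+\,\cdot\,,\,\cdot\,)$ converts $\tilde g(S_{m+1},K_{m+1})$ into $g(s+S_{m+1},K_{m+1})$. For $s>u_1$ (so that $u_1-s<0$) both sides of~\eqref{eq2x} vanish, because $Y(u_1-s)=\gD$ by the convention $Y(t)=\gD$ for $t<0$ and $h$ annihilates any tuple containing $\gD$; hence it suffices to treat $s\le u$.

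Next, applying lemma~\ref{lem2iv} to $W_m(h,\tilde g,u-s)$ gives, $Q_v$--a.s.,
$$E_v\bigl(W_m(h,\tilde g,u-s)\cond\cB_m\bigr) = R_0(h,\tilde g,u-s)(S_m,K_m).$$
The second ingredient is the shift-compatibility of $R_0$ in its $g$-slot, namely $R_0(h,\tilde g,u-s)(t,w)=R_0(h,g,u)(s+t,w)$ for all $t\ge 0$, $w\in V_c$; this is the general-$g$ analogue of~\eqref{eq2via} and follows by expanding both sides with~\eqref{eq2v} and matching the time arguments $u-s-t$ and the last arguments $g(s+t+S_1,K_1)$. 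Taking $t=S_m$, $w=K_m$ (both sides being $0$ on $\{K_m=\gD\}$ by the extension conventions) yields $R_0(h,\tilde g,u-s)(S_m,K_m)=R_0(h,g,u)(s+S_m,K_m)$.

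Finally I would combine these ingredients via the tower property:
$$R_m(h,g,u)(s,v) = E_v\bigl(W_{m,s}(h,g,u)\bigr) = E_v\Bigl(E_v\bigl(W_m(h,\tilde g,u-s)\cond\cB_m\bigr)\Bigr) = E_v\bigl(R_0(h,g,u)(s+S_m,K_m)\bigr),$$
and the right-hand side is exactly $(U_m R_0(h,g,u))(s,v)$ by the definition~\eqref{eq2ii} of $U_m$, now applied with the bounded measurable function $R_0(h,g,u)$ in place of $g$. I expect the only delicate point to be the bookkeeping of the various time shifts together with the conventions $Y(t)=\gD$ for $t<0$ and $g(+\infty,\,\cdot\,)=g(\,\cdot\,,\gD)=0$, which must be invoked consistently so that every identity above remains correct for all $s\in\R_+$ and on the cemetery event $\{K_m=\gD\}$; once this is handled, the argument is a routine chaining of lemma~\ref{lem2iv} with the shift-compatibility of $R_0$.
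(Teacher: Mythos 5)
Your proof is correct and is essentially the paper's own argument: the same chain of identities --- the relation $W_{m,s}(h,g,u)=W_m\bigl(h,g(s+\,\cdot\,),u-s\bigr)$, the conditioning via lemma~\ref{lem2iv}, the tower property, and the definition~\eqref{eq2ii} of $U_m$ --- merely read in the opposite direction (from $R_m$ to $U_m R_0$ instead of from $U_m R_0$ to $R_m$). The only cosmetic difference is that you spell out the general-$g$ shift identity $R_0(h,g(s+\,\cdot\,),u-s)(t,w)=R_0(h,g,u)(s+t,w)$ and the vanishing for $s>u_1$, which the paper handles implicitly by citing~\eqref{eq2via} together with the cemetery conventions.
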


\begin{proof}
By definition of $U_m$
\begin{equation*}
    \bigl(U_m R_0(h,g,u)\bigr)(s,v)
        = E_v\bigl(R_0(h,g,u)(s+S_m,K_m)\bigr).
\end{equation*}
With formula~\eqref{eq2via} and lemma~\ref{lem2iv} we find
\begin{align*}
    \bigl(U_m R_0(h,g,u)\bigr)(s,v)
        &= E_v\bigl(R_0(h,g(s+\,\cdot\,),u-s)(S_m,K_m)\bigr)\\
        &= E_v\bigl(E_v\bigl(W_m(h,g(s+\,\cdot\,),u-s)\cond \cB_m\bigr)\bigr)\\
        &= E_v\bigl(W_m(h,g(s+\,\cdot\,),u-s)\bigr)\\
        &= E_v\bigl(W_{m,s}(h,g,u)\bigr)\\
        &= R_m(h,g,u)(s,v).\qedhere
\end{align*}
\end{proof}

\begin{corollary}   \label{cor2vi}
For all $m$, $n$, $r\in\N$, $u\in\Rrp$, $h\in\bcBG$, $g\in\bRV$,
\begin{equation}    \label{eq2xi}
    U_n R_m(h,g,u) = R_{n+m}(h,g,u).
\end{equation}
is valid.
\end{corollary}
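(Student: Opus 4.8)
The plan is to prove the operator identity $U_n R_m(h,g,u) = R_{n+m}(h,g,u)$ by combining the semigroup property of $(U_n)$ from Lemma~\ref{lem2ii}(b) with the intertwining relation of Lemma~\ref{lem2v}. The key observation is that Lemma~\ref{lem2v} expresses $R_m$ as $U_m$ applied to $R_0$, so the corollary should reduce to associativity of composition of the $U_j$.

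\begin{proof}
By Lemma~\ref{lem2v}, applied with $m$ replaced by $n+m$, we have
\begin{equation*}
    R_{n+m}(h,g,u) = U_{n+m}\, R_0(h,g,u).
\end{equation*}
On the other hand, applying Lemma~\ref{lem2v} with its parameter $m$ gives
$R_m(h,g,u) = U_m\, R_0(h,g,u)$, so that
\begin{equation*}
    U_n\, R_m(h,g,u) = U_n\, U_m\, R_0(h,g,u).
\end{equation*}
By the semigroup property of $(U_k,\,k\in\N_0)$ established in Lemma~\ref{lem2ii}(b),
$U_n\comp U_m = U_{n+m}$, whence
\begin{equation*}
    U_n\, R_m(h,g,u) = U_{n+m}\, R_0(h,g,u) = R_{n+m}(h,g,u),
\end{equation*}
which is the assertion~\eqref{eq2xi}.
\end{proof}

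I expect essentially no obstacle here: the corollary is a formal consequence of two results proved immediately above, and the only point requiring a moment's care is that the quantity $R_0(h,g,u)$, regarded as a function of $(s,v)\in\R_+\times V_c$, lies in $\bRV$, so that the operators $U_n$ and $U_m$ are legitimately applicable to it and the semigroup relation from Lemma~\ref{lem2ii}(b) may be invoked. This membership follows from the construction of $R_0$ together with the remark after~\eqref{eq2ii} that $U_n$ maps $\bRV$ into itself; in particular the boundedness and measurability needed to apply the semigroup identity are automatic. Thus the proof is a two-line chaining of the preceding lemmas, with the substance of the argument already contained in Lemmas~\ref{lem2ii} and~\ref{lem2v}.
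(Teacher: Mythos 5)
Your proof is correct and is essentially identical to the paper's own argument: both chain $U_n R_m(h,g,u) = U_n\comp U_m R_0(h,g,u) = U_{n+m}R_0(h,g,u) = R_{n+m}(h,g,u)$ using Lemma~\ref{lem2v} and the semigroup property from Lemma~\ref{lem2ii}(b). Your added remark that $R_0(h,g,u)$ lies in $\bRV$ so the operators are legitimately applicable is a sound (and implicit in the paper) point of care, not a deviation.
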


\begin{proof} By lemma~\ref{lem2v} and lemma~\ref{lem2ii}, statement~(b), we obtain
\begin{equation*}
    U_n R_m(h,g,u) = U_n\comp U_m R_0(h,g,u) = U_{n+m} R_0(h,g,u) = R_{n+m}(h,g,u).\qedhere
\end{equation*}
\end{proof}

\begin{lemma}   \label{lem2vii}
For all $m$, $n\in\N$, $m\le n$, $r\in\N$, $u\in\Rrp$, $h\in\bcBG$, $g\in\bRV$, $\xi\in\cG$,
the following formula holds true:
\begin{equation}    \label{eq2xii}
    E_\xi\bigl(W_n(h,g,u)\cond \cB_m\bigr) = R_{n-m}(h,g,u)(S_m,K_m).
\end{equation}
\end{lemma}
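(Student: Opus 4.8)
The statement generalizes Lemma~\ref{lem2iv}, which handles precisely the case $m=n$ (there the right-hand side reads $R_0(h,g,u)(S_m,K_m)$, matching $R_{n-m}$ with $n-m=0$ since $U_0=\mathrm{id}$). So the natural strategy is to reduce the general $m\le n$ case to the $m=n$ case by peeling off the extra conditioning steps using the tower property of conditional expectation together with the semigroup/Markov machinery from Lemmas~\ref{lem2ii} and~\ref{lem2v}.

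First I would condition on the larger $\sigma$-algebra $\cB_n$ and apply the tower property, writing
\begin{equation*}
    E_\xi\bigl(W_n(h,g,u)\cond \cB_m\bigr)
        = E_\xi\bigl(E_\xi\bigl(W_n(h,g,u)\cond \cB_n\bigr)\cond \cB_m\bigr).
\end{equation*}
By Lemma~\ref{lem2iv} the inner conditional expectation equals $R_0(h,g,u)(S_n,K_n)$, so the problem becomes computing
\begin{equation*}
    E_\xi\bigl(R_0(h,g,u)(S_n,K_n)\cond \cB_m\bigr).
\end{equation*}
Now the integrand is exactly of the form $\varphi(S_n,K_n)$ with $\varphi=R_0(h,g,u)\in\bRV$, which is precisely the situation governed by Lemma~\ref{lem2ii}, statement~(a), formula~\eqref{eq2iii}. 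Applying that identity with $g$ replaced by $R_0(h,g,u)$ gives
\begin{equation*}
    E_\xi\bigl(R_0(h,g,u)(S_n,K_n)\cond \cB_m\bigr)
        = \bigl(U_{n-m}R_0(h,g,u)\bigr)(S_m,K_m).
\end{equation*}

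Finally I would identify $U_{n-m}R_0(h,g,u)$ with $R_{n-m}(h,g,u)$, which is exactly the content of Lemma~\ref{lem2v} (with $m$ there playing the role of $n-m$ here). Substituting this gives the claimed right-hand side $R_{n-m}(h,g,u)(S_m,K_m)$, completing the proof. The whole argument is a clean three-step chain: tower property, then \eqref{eq2iii} applied to the function $R_0(h,g,u)$, then Lemma~\ref{lem2v}. I expect no serious obstacle here, since all the heavy lifting was done in the earlier lemmas; the only point requiring mild care is verifying that $R_0(h,g,u)$ genuinely lies in $\bRV$ so that Lemma~\ref{lem2ii}(a) is applicable — but this follows from its definition as a conditional expectation of a bounded quantity, together with the convention extending it to $\Rbp\times(V_c\cup\{\gD\})$, and it is consistent with the $g$-slot of all the preceding formulas.
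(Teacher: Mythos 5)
Your proposal is correct and takes essentially the same route as the paper's own proof, which is exactly the three-step chain you describe: tower property through $\cB_n$, Lemma~\ref{lem2iv} for the inner conditional expectation, formula~\eqref{eq2iii} of Lemma~\ref{lem2ii} (with $s=0$ and $g$ replaced by $R_0(h,g,u)$), and finally Lemma~\ref{lem2v} to identify $U_{n-m}R_0(h,g,u)$ with $R_{n-m}(h,g,u)$. Your closing check that $R_0(h,g,u)\in\bRV$ is a sound (if implicit in the paper) verification, consistent with the conventions stated after~\eqref{eq2ii}.
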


\begin{proof}
Apply lemma~\ref{lem2iv} to compute as follows
\begin{align*}
    E_\xi\bigl(W_n(h,g,u)\cond \cB_m\bigr)
        &= E_\xi\bigl(E_\xi\bigl(W_n(h,g,u)\cond \cB_n\bigr)\cond \cB_m\bigr)\\
        &= E_\xi\bigl(R_0(h,g,u)(S_n,K_n)\cond \cB_m\bigr)\\
        &= \bigl(U_{n-m} R_0(h,g,u)\bigr)(S_m,K_m),
\end{align*}
where we used lemma~\ref{lem2ii}, formula~\eqref{eq2iii}, in the last step. An application of
lemma~\ref{lem2v} concludes the proof.
\end{proof}

With these preparations, we are ready for the

\begin{proof}[Proof of Proposition~\ref{prop2i}]
Assume that $f\in\bcB(\cG)$, $s$, $t\in\R_+$, and that $\xi\in\cG$. Since
$(S_m,\,m\in\N)$ $Q_\xi$--a.s.\ strictly increases to $+\infty$, and since $S_m$,
$m\in\N$, is an $\cF^Y$--stopping time (cf., lemma~\ref{lemA} in appendix~\ref{appA}),
it suffices to prove that equation~\eqref{eq2i} holds $Q_\xi$--a.s.\ for every
$m\in\N_0$ on $\{S_m\le s <S_{m+1}, Y(s)\ne \gD\}\in\cF^Y_s$. We fix an arbitrary
$m\in\N_0$. Clearly, the family of random variables of the form
\begin{equation*}
	g\bigl(Y(w)\bigr)\,1_{\{0\le w <S_m\}}\,W_m(h,1,u)
\end{equation*}
with $r$, $q\in\N$, $u\in\Rrp$, $u_r=s$, $w\in\Rrp[q]$, $h\in\bcBG$, and
$g\in\bcBG[q]$, generates the $\gs$--algebra $\cF_s^Y\cap\{S_m\le s<S_{m+1}\}$.
Therefore it is sufficient to show that
\begin{equation}   \label{eq2xiii}
\begin{split}
    E_\xi\bigl(g\bigl(Y(w)\bigr)\,&1_{\{0\le w <S_m\}}\,
				W_m(h,1,u)\,f\bigl(Y(s+t)\bigr)\bigr)\\[1ex]
        &= E_\xi\bigl(g\bigl(Y(w)\bigr)\,1_{\{0\le w <S_m\}}\,W_m(h,1,u)\,
            E_{Y(s)}\bigl(f(Y(t))\bigr)\bigr),
\end{split}
\end{equation}
holds for all $r$, $q\in\N$, $u\in\Rrp$ with $u_r=s$, $w\in\Rrp[q]$, $h\in\bcBG$,
$g\in\bcBG[q]$ and $f\in B(\cG)$. (Since the random variables under the
expectation signs of both sides of equation~\eqref{eq2xiii} vanish on the
set $\{Y(s)=\gD\}$, we can henceforth safely ignore the condition $Y(s)\ne \gD$.)
Expand the left hand side of equation~\eqref{eq2xiii} as follows:
\begin{equation}   \label{eq2xiv}
\begin{split}
    E_\xi\bigl(g\bigl(&Y(w)\bigr)\,1_{\{0\le w <S_m\}}\,W_m(h,1,u)\,f\bigl(Y(s+t)\bigr)\bigr)\\[1ex]
        &= \sum_{n=m}^\infty E_\xi\bigl(g\bigl(Y(w)\bigr)\,1_{\{0\le w <S_m\}}\,W_m(h,1,u)\,
                W_n(f,1,s+t)\bigr).
\end{split}
\end{equation}
Consider the summand with $n=m$, which is of the form
\begin{align*}
    E_\xi\bigl(&g\bigl(Y(w)\bigr)\,1_{\{0\le w < S_m\}}\,W_m(h\otimes f,1,(u,s+t))\bigr)\\
        &= E_\xi\bigl(g\bigl(Y(w)\bigr)\,1_{\{0\le w < S_m\}}\,
            E_\xi\bigl(W_m(h\otimes f,1,(u,s+t))\cond \cB_m\bigr)\bigr)\\
        &= E_\xi\bigl(g\bigl(Y(w)\bigr)\,1_{\{0\le w < S_m\}}
                    \,R_0(h\otimes f, 1, (u,s+t))(S_m,K_m)\bigr),
\end{align*}
where we made use of formula~\eqref{eq2ix}. Now we apply statement~(a) of lemma~\ref{lem2iii}
with the choice $q=1$ which yields (recall that $u_r=s$)
\begin{align*}
    E_\xi\bigl(&g\bigl(Y(w)\bigr)\,1_{\{0\le w < S_m\}}\,W_m(h\otimes f,1,(u,s+t))\bigr)\\
        &= E_\xi\bigl(g\bigl(Y(w)\bigr)\,1_{\{0\le w < S_m\}}
                    \,R_0(M(f,s+t,s)h, 1, u)(S_m,K_m)\bigr)\\
        &= E_\xi\bigl(g\bigl(Y(w)\bigr)\,1_{\{0\le w < S_m\}}
                    \,E_\xi\bigl(W_m(M(f,s+t,s)h, 1, u)\cond\cB_m\bigr)\bigr)\\
        &= E_\xi\bigl(g\bigl(Y(w)\bigr)\,1_{\{0\le w < S_m\}}\,
                    W_m(M(f,s+t,s)h, 1, u)\bigr),
\end{align*}
where we used formula~\eqref{eq2ix} again. Combining~\eqref{eq2iv} with~\eqref{eq2viib}
in $W_m(M(f,s+t,s)h, 1, u)$ we thus have shown
\begin{equation}    \label{eq2xv}
\begin{split}
    E_\xi\bigl(&g\bigl(Y(w)\bigr)\,1_{\{0\le w < S_m\}}\,W_m(h,1,u)\,W_m(f,1,s+t)\bigr)\\
        &= E_\xi\bigl(g\bigl(Y(w)\bigr)\,1_{\{0\le w < S_m\}}\,W_m(h,1,u)\,
                E_{Y(s)}\bigl(f(Y(t))\,1_{\{0<t<S_1\}}\bigr)\bigr).
\end{split}
\end{equation}
Next consider a generic summand with $n>m$ on the right hand side of~\eqref{eq2xiv}. Then
\begin{align*}
    E_\xi\bigl(&g\bigl(Y(w)\bigr)\,1_{\{0\le w<S_m\}}\,W_m(h,1,u)\,W_n(f,1,s+t)\bigr)\\
        &= E_\xi\bigl(g\bigl(Y(w)\bigr)\,1_{\{0\le w<S_m\}}\,W_m(h,1,u)\,
                E_\xi\bigl(W_n(f,1,s+t)\cond \cB_{m+1}\bigr)\bigr)\\
        &= E_\xi\bigl(g\bigl(Y(w)\bigr)\,1_{\{0\le w<S_m\}}\,W_m(h,1,u)\,
                    R_{n-m-1}(f,1,s+t)(S_{m+1},K_{m+1})\bigr)\\
        &= E_\xi\bigl(g\bigl(Y(w)\bigr)\,1_{\{0\le w<S_m\}}\,W_m(h,R_{n-m-1}(f,1,s+t),u)\bigr)
\end{align*}
where we used lemma~\ref{lem2vii} in the second step. Conditioning on $\cB_m$ gives
\begin{align*}
    E_\xi\bigl(&g\bigl(Y(w)\bigr)\,1_{\{0\le w<S_m\}}\,W_m(h,1,u)\,W_n(f,1,s+t)\bigr)\\
        &= E_\xi\bigl(g\bigl(Y(w)\bigr)\,1_{\{0\le w<S_m\}}\,
                E_\xi\bigl(W_m(h,R_{n-m-1}(f,1,s+t),u)\cond \cB_m\bigr)\bigr)\\
        &= E_\xi\bigl(g\bigl(Y(w)\bigr)\,1_{\{0\le w<S_m\}}\,
                R_0(h,R_{n-m-1}(f,1,s+t),u)(S_m,K_m)\bigr)\\
        &= E_\xi\bigl(g\bigl(Y(w)\bigr)\,1_{\{0\le w<S_m\}}\,
                R_0\bigl(N(R_{n-m-1}(f,1,s+t),s)h,1,u\bigr)(S_m,K_m)\bigr).
\end{align*}
Here we used lemmas~\ref{lem2iv}, \ref{lem2iii}.b, and in the last step also $u_r=s$.
Applying lemma~\ref{lem2iv}, we get
\begin{align*}
    E_\xi\bigl(g\bigl(Y(w)\bigr)\,&1_{\{0\le w<S_m\}}\,W_m(h,1,u)\,W_n(f,1,s+t)\bigr)\\
	   &=  E_\xi\Bigl(g\bigl(Y(w)\bigr)\,1_{\{0\le w<S_m\}}\\
	   &\hspace{4em}\times E_{\xi}\bigl(W_m(N(R_{n-m-1}(f,1,s+t),s)h,1,u)
				\cond \cB_m\bigr)\Bigr)\\
        &= E_\xi\bigl(g\bigl(Y(w)\bigr)\,1_{\{0\le w<S_m\}}\,
                W_m\bigl(N(R_{n-m-1}(f,1,s+t),s)h,1,u\bigr)\bigr)\\
        &= E_\xi\bigl(g\bigl(Y(w)\bigr)\,1_{\{0\le w<S_m\}}\,
                h\bigl(Y(u)\bigr)\,\chi_m(u)\\
        &\hspace{4em}\times E_{Y(s)}\bigl(R_{n-m-1}(f,1,s+t)(s+S_1,K_1)\bigr)\bigr).
\end{align*}
For $\eta\in\cG$ relation~\eqref{eq2via} yields
\begin{align*}
    E_\eta\bigl(R_{n-m-1}(f,1,s+t)(s+S_1,K_1)\bigr)
        &= E_\eta\bigl(R_{n-m-1}(f,1,t)(S_1,K_1)\bigr)\\
        &= E_\eta\bigl(E_\eta\bigl(W_{n-m}(f,1,t)\cond\cB_1\bigr)\bigr)\\
        &= E_\eta\bigl(W_{n-m}(f,1,t)\bigr),
\end{align*}
with another application of formula~\eqref{eq2xii}. With the choice $\eta=Y(s)$ this
relation therefore gives
\begin{equation}    \label{eq2xvi}
\begin{split}
    E_\xi\bigl(g\bigl(Y(w)\bigr)\,&1_{\{0\le w<S_m\}}\,W_m(h,1,u)\,W_n(f,1,s+t)\bigr)\\
        &= E_\xi\Bigl(g\bigl(Y(w)\bigr)\,1_{\{0\le w<S_m\}}\,h\bigl(Y(u)\bigr)\,\chi_m(u)\\
        &\hspace{7em}\times E_{Y(s)}\bigl(f(Y(t))\,1_{\{S_{n-m}\le t<S_{n-m+1}\}}\bigr)\Bigr).
\end{split}
\end{equation}
Formulae~\eqref{eq2xv} and~\eqref{eq2xvi} entail
\begin{align*}
    \sum_{n=m}^\infty &E_\xi\Bigl(g\bigl(Y(w)\bigr)\,1_{\{0\le w <S_m\}}\,W_m(h,1,u)\,
                W_n(f,1,s+t)\Bigr)\\[1ex]
        &= E_\xi\Bigl(g\bigl(Y(w)\bigr)\,1_{\{0\le w < S_m\}}\,W_m(h,1,u)\,
                E_{Y(s)}\bigl(f(Y(t))\,1_{\{0\le t<S_1\}}\bigr)\Bigr)\\
        &\hspace{4em}+ \sum_{n=m+1}^\infty E_\xi\Bigl(g\bigl(Y(w)\bigr)\,1_{\{0\le w<S_m\}}\,
                h\bigl(Y(u)\bigr)\,\chi_m(u)\\
        &\hspace{12em}\times E_{Y(s)}\bigl(f(Y(t))\,1_{\{S_{n-m}\le t<S_{n-m+1}\}}\bigr)\Bigr)\\[1ex]
        &= E_\xi\Bigl(g\bigl(Y(w)\bigr)\,1_{\{0\le w < S_m\}}\,W_m(h,1,u)\,
                E_{Y(s)}\bigl(f(Y(t))\bigr)\Bigr),
\end{align*}
which proves equation~\eqref{eq2xiii}.
\end{proof}

\subsection{A Brownian motion on \boldmath $\cG$ and its generator}\label{ssect2iv}

The stochastic process $Y$ and its underlying probability family $(\Xi,\cC,Q)$, $Q =
(Q_\xi,\,\xi\in\cG)$, are not very convenient to work with. Therefore we introduce
another version in this subsection. As the underlying sample space $\gO$ we choose
the path space $\CD(\R_+,\cG)$ of $Y$ endowed with the $\gs$--algebra $\cA$ generated
by the cylinder sets of $\CD(\R_+,\cG)$. Obviously, $Y$ is a measurable mapping from
$(\Xi,\cC)$ into $(\gO,\cA)$. For $\xi\in\cG$ let $P_\xi$ denote the image measure
of $Q_\xi$ under $Y$. Set $P=(P_\xi,\,\xi\in\cG)$. Moreover, let the canonical
coordinate process on $(\gO,\cA)$ be denoted by  $X=(X_t,\,t\in\R_+)$. Clearly, $X$
is a version of $Y$. We set $X_{+\infty}=\gD$, and denote the natural filtration of
$X$ by $\cF=(\cF_t,\,t\in\R_+)$. As usual $\cF_\infty$ stands for
$\gs(\cF_t,\,t\in\R_+)$. Whenever it is notationally more convenient we shall also
write $X(t)$ for $X_t$, $t\in\R_+$.

Let $H_V$ denote the hitting time of the set $V$ of vertices of $\cG$ by $X$:
\begin{equation*}
    H_V = \inf\{t > 0,\,X_t\in V\}.
\end{equation*}
Suppose that $X$ starts in $\xi\in l^\circ$, $l\in\cI\cup\cE$, and that $l$ is
isomorphic to the interval $I$. Then it follows directly from the discussion at the
end of subsection~\ref{ssect2ii} that the stopped process $X(\,\cdot\,\land H_V)$ is
equivalent to a standard Brownian motion on $I$ with absorption in the endpoint(s)
of $I$. The necessary path properties of $X$ being obvious, we therefore find that
$X$ satisfies all defining properties of a Brownian motion on $\cG$ (cf.\
section~\ref{sect1}, and definition~\Iref{def_3_1}), except that we still have to
prove its strong Markov property. This will be done next.

Let $\theta = (\theta_t,\,t\in\R_+)$ denote the natural family of shift operators on
$\gO$: $\theta_t(\go) = \go(t+\,\cdot\,)$ for $\go\in\gO$. Thus in particular
$\theta$ is a family of shift operators for $X$. Since the simple Markov property is
a property of the finite dimensional distributions of a stochastic process, and the
finite dimensional distributions of $X$ and $Y$ coincide, it immediately follows
from proposition~\ref{prop2i} that $X$ is a Markov process. Then standard monotone
class arguments (e.g., \cite{KaSh91, ReYo91}) give the Markov property in the familiar
general form:

\begin{proposition}   \label{prop2viii}
Assume that $\xi\in\cG$, $t\in\R_+$, and that $W$ is an $\cF_\infty$--measurable,
positive or integrable random variable on $(\gO,\cA, P)$. Then
\begin{equation}    \label{eq2xvii}
    E_\xi\bigl(W\comp \theta_t \cond \cF_t\bigr) = E_{X_t}\bigl(W\bigr),
\end{equation}
holds true $P_\xi$--a.s.\ on $\{X_t\ne \gD\}$.
\end{proposition}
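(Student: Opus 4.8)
The plan is to transfer the one-step Markov property of Proposition~\ref{prop2i} from $Y$ to $X$, bootstrap it to finite-dimensional (cylinder) functionals by induction, and then pass to an arbitrary $\cF_\infty$--measurable $W$ by a functional monotone class argument.

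First I would record the one-step version for $X$. Proposition~\ref{prop2i} gives, for $f\in B(\cG)$ and $s$, $t\in\R_+$, the identity $E_\xi(f(Y(s+t))\cond\cF^Y_s)=E_{Y(s)}(f(Y(t)))$ $Q_\xi$--a.s.\ on $\{Y(s)\ne\gD\}$. Equivalently, in integrated form, for every bounded cylinder functional $\Phi=g(Y(u_1),\dotsc,Y(u_k))$ with $u_1<\dotsb<u_k\le s$ one has
\begin{equation*}
    E_\xi\bigl(\Phi\,f(Y(s+t))\bigr) = E_\xi\bigl(\Phi\,(P_tf)(Y(s))\bigr),
\end{equation*}
where $(P_tf)(\eta)=E_\eta(f(Y(t)))$. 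Both sides are determined solely by the finite-dimensional distributions of $Y$, and $P_t$ is determined by the one-dimensional distributions. Because $X$ is a version of $Y$, i.e.\ $P_\xi=Q_\xi\comp Y^{-1}$, the processes $X$ and $Y$ share all finite-dimensional distributions and the same $P_t$, so the identical relation holds with $Y$ replaced by $X$. With the convention $f(\gD)=0$ both sides vanish on $\{X_s=\gD\}$ — there $X_{s+t}=\gD$ by absorption and $(P_tf)(\gD)=0$ — so in conditional form $E_\xi(f(X_{s+t})\cond\cF_s)=(P_tf)(X_s)=E_{X_s}(f(X_t))$ $P_\xi$--a.s.\ on $\{X_s\ne\gD\}$.

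Next I would establish \eqref{eq2xvii} for multiplicative cylinder functionals $W=\prod_{i=1}^n f_i(X_{t_i})$, with $0\le t_1<\dotsb<t_n$ and $f_i\in B(\cG)$, by induction on $n$; here $W\comp\theta_t=\prod_{i=1}^n f_i(X_{t+t_i})$. The base case $n=1$ is the one-step property just obtained. For the inductive step I would condition on $\cF_{t+t_{n-1}}\supseteq\cF_t$, pull out the $\cF_{t+t_{n-1}}$--measurable factor $\prod_{i<n}f_i(X_{t+t_i})$, and apply the one-step property to the last factor, replacing it by $(P_{t_n-t_{n-1}}f_n)(X_{t+t_{n-1}})$. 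Setting $\wt f_{n-1}:=f_{n-1}\,(P_{t_n-t_{n-1}}f_n)\in B(\cG)$ turns this into the shifted cylinder $W'\comp\theta_t$ with $W'=\prod_{i=1}^{n-2}f_i(X_{t_i})\,\wt f_{n-1}(X_{t_{n-1}})$, reducing the number of times from $n$ to $n-1$, so the induction hypothesis yields $E_\xi(W\comp\theta_t\cond\cF_t)=E_{X_t}(W')$. The very same conditioning performed under $E_\eta$ shows $E_\eta(W')=E_\eta(W)$ for $\eta\in\cG$, whence $E_{X_t}(W')=E_{X_t}(W)$ and the formula follows. Throughout, $f(\gD)=0$ together with absorption at $\gD$ keeps the identity consistent at the intermediate times, and the qualifier ``on $\{X_t\ne\gD\}$'' is preserved.

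Finally I would let $\cH$ be the collection of bounded $\cF_\infty$--measurable $W$ for which \eqref{eq2xvii} holds $P_\xi$--a.s.\ on $\{X_t\ne\gD\}$ for every $\xi\in\cG$. Then $\cH$ is a vector space containing the constants, it is stable under bounded monotone limits (by monotone convergence for conditional expectations applied to each side), and it contains the multiplicative class of cylinder functionals treated above, which is closed under multiplication and generates $\cF_\infty=\gs(X_s,\,s\in\R_+)$. The functional monotone class theorem then gives \eqref{eq2xvii} for all bounded $\cF_\infty$--measurable $W$; truncation $W\wedge n$ with monotone convergence extends it to positive $W$, and the decomposition $W=W^+-W^-$ to integrable $W$, which is the assertion. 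The step I expect to demand the most care is not any single algebraic manipulation but the uniform bookkeeping of the cemetery state: at each stage one must verify that $W\comp\theta_t$ vanishes on $\{X_t=\gD\}$, that $E_{X_t}(W)$ is only ever evaluated at $X_t\in\cG$, and that extending functionals by $0$ at $\gD$ is compatible with conditioning, so that the restriction to $\{X_t\ne\gD\}$ propagates correctly from Proposition~\ref{prop2i} through the induction and the monotone class extension.
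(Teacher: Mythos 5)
Your proposal is correct and follows the paper's route exactly: the paper likewise transfers the simple Markov property from $Y$ to $X$ by noting that it depends only on the finite-dimensional distributions, which coincide since $P_\xi=Q_\xi\comp Y^{-1}$, and then invokes ``standard monotone class arguments'' (citing \cite{KaSh91, ReYo91}) for the general form~\eqref{eq2xvii}. You have merely written out in detail --- the induction over multiplicative cylinder functionals, the functional monotone class step, and the bookkeeping at the cemetery state $\gD$ --- what the paper compresses into that citation.
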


A routine argument based on the path properties of $X$ (similar to, but much easier
than the one used in the proof of lemma~\ref{lemA} in appendix~\ref{appA}) shows
that $H_V$ is an $\cF$--stopping time. We have the following

\begin{lemma}   \label{lem2ix}
$X$ has the strong Markov property with respect to the hitting time $H_V$. That is,
for all $\xi\in\cG$, $t\in\R_+$, $f\in\bcB(\cG)$,
\begin{equation}    \label{eq2xviii}
    E_\xi\bigl(f(X_{t+H_V})\cond \cF_{H_V}\bigr)
        = E_{X_{H_V}}\bigl(f(X_t)\bigr)
\end{equation}
holds true $P_\xi$--a.s.
\end{lemma}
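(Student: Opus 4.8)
The plan is to establish the strong Markov property at $H_V$ by combining the simple Markov property (Proposition~\ref{prop2viii}) with a path-decomposition argument, exploiting the crucial fact that $H_V \le S_1$ and that $Y(\,\cdot\,\land S_V)$ is a standard Brownian motion on a single edge with absorption (as established at the end of subsection~\ref{ssect2ii}). First I would observe that on $\{\xi\in V\}$ the hitting time $H_V=0$ trivially and the claim reduces to normality of the process, so I may assume $\xi\in l^\circ$ for some edge $l\in\cI\cup\cE$. The strategy is to condition on the value $X_{H_V}\in V$ and use the fact that prior to $H_V$ the process behaves like an ordinary one-dimensional Brownian motion, for which the strong Markov property at the exit time of an interval is classical.

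**Approximating $H_V$ by dyadic stopping times.** The standard route to upgrade the simple Markov property to a strong Markov property at a fixed stopping time is discretization. I would introduce the dyadic approximations
\begin{equation*}
    H_V^{(n)} = \frac{\lceil 2^n H_V\rceil}{2^n}
\end{equation*}
which take values in the countable set $\{k2^{-n},\,k\in\N\}\cup\{+\infty\}$ and decrease to $H_V$ as $n\to\infty$. For each such discrete-valued approximation the strong Markov identity follows from Proposition~\ref{prop2viii} by summing over the countably many possible values: on each event $\{H_V^{(n)}=k2^{-n}\}\in\cF_{k2^{-n}}$ one applies \eqref{eq2xvii} with $t=k2^{-n}$ and $W=f(X_t)$, then reassembles. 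The payoff is the identity
\begin{equation*}
    E_\xi\bigl(f(X_{t+H_V^{(n)}})\cond \cF_{H_V^{(n)}}\bigr)
        = E_{X_{H_V^{(n)}}}\bigl(f(X_t)\bigr)\qquad P_\xi\text{--a.s.}
\end{equation*}
I would then pass to the limit $n\to\infty$. The right-continuity of the paths (all paths lie in $\CD(\R_+,\cG)$) gives $X_{t+H_V^{(n)}}\to X_{t+H_V}$ and $X_{H_V^{(n)}}\to X_{H_V}$ pathwise; since $X_{H_V}\in V$ and the approximations approach from the right, one must check that $X_{t+H_V^{(n)}}$ converges appropriately. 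For the filtration side one uses that $\cF_{H_V}\subset\cF_{H_V^{(n)}}$ for every $n$, so the conditional expectations form a reverse martingale and converge by the (backward) martingale convergence theorem.

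**Handling the limit and continuity of $\xi\mapsto E_\xi$.** The main obstacle is the passage to the limit on the right-hand side: one needs $E_{X_{H_V^{(n)}}}(f(X_t))\to E_{X_{H_V}}(f(X_t))$, which requires some regularity of the map $\xi\mapsto E_\xi(f(X_t))$ near the vertex set $V$. For a general bounded measurable $f$ this need not be continuous, so the clean approach is to first prove \eqref{eq2xviii} for $f$ in a measure-determining class on which $x\mapsto P_t f(x):=E_x(f(X_t))$ is continuous — for instance for $f\in\Co$, using the Feller property of $X$ on the single edge (the stopped process is ordinary Brownian motion with absorption, whose transition semigroup is Feller). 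Once the identity holds for all $f\in\Co$, a monotone class argument extends it to all $f\in\bcB(\cG)$, exactly as in the passage from Proposition~\ref{prop2i} to Proposition~\ref{prop2viii}. The hard part will be ensuring the convergence $X_{t+H_V^{(n)}}\to X_{t+H_V}$ in distribution is compatible with the discontinuity structure at vertices; here the key simplification is that, because $H_V\le S_1$ and the process is an honest one-dimensional Brownian motion up to $H_V$, the exit location $X_{H_V}$ and the overshoot are governed by the classical interval exit problem, so no jump to $\gD$ or crossover occurs strictly before $H_V$, and right-continuity at $H_V$ together with the strong Feller-type continuity on the single edge closes the argument.
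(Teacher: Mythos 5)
Your reduction to dyadic stopping times is fine as far as it goes: the identity at each discrete-valued approximation $H_V^{(n)}$ does follow from Proposition~\ref{prop2viii} by summing over the events $\{H_V^{(n)}=k2^{-n}\}\in\cF_{k2^{-n}}$. The genuine gap is the limit passage, and it is not a technicality but a circularity. To conclude $E_{X_{H_V^{(n)}}}\bigl(f(X_t)\bigr)\to E_{X_{H_V}}\bigl(f(X_t)\bigr)$ you need continuity of $\xi\mapsto E_\xi\bigl(f(X_t)\bigr)$ for $f\in\Co$ at points near the vertex set --- the points where $X_{H_V^{(n)}}$ lives, which are close to, but in general not equal to, $X_{H_V}\in V$. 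That is precisely the Feller property of $X$, which in the paper (Proposition~\ref{prop2x}) is proved \emph{after}, and by means of, Lemma~\ref{lem2ix}: its proof rests on the first passage time formula~\eqref{eq2xxii}, i.e., on the strong Markov property at $H_V$. The regularity you invoke instead --- Feller continuity of Brownian motion with absorption on a single edge --- only controls the stopped process $X(\,\cdot\,\land H_V)$; it says nothing about $\xi\mapsto E_\xi\bigl(f(X_t)\bigr)$ for times beyond $H_V$, whose behaviour near a vertex depends on the Wentzell data and on the crossover mechanism, which is exactly the information encoded in the decomposition you are trying to prove (any attempt to obtain that continuity by splitting $E_\xi\bigl(f(X_t)\bigr)$ at $H_V$ begs the question). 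The reverse-martingale remark also does not apply as stated, since the integrand $f(X_{t+H_V^{(n)}})$ varies with $n$; the standard argument tests against a fixed bounded $\cF_{H_V}$-measurable $g$ and passes to the limit in $E_\xi\bigl(E_{X_{H_V^{(n)}}}(f(X_t))\,g\bigr)$, which runs into the same missing continuity.

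The paper avoids all of this by using no regularity of $X$ whatsoever: it pulls~\eqref{eq2xviii} back to the process $Y$ on the product space, notes that $S_V=H_V\comp Y\le S_1$, so that $S_V$ depends only on the coordinate $\go^0\in\Xi^0$ and the test variable $G$ is $\cF^0_{S_V}$-measurable, decomposes the expectation over the crossover intervals $[S_n,S_{n+1})$ by means of lemma~\ref{lem2vii} and formula~\eqref{eq2via}, splits into $\{S_V=S_1\}$ and $\{S_V<S_1\}$, and on the latter event applies the strong Markov property of the \emph{given} process $Z^0$ at the stopping time $S_V$ (using $S_1=S_V+S_1\comp\vt_{S_V}$ and $K_1=K_1\comp\vt_{S_V}$). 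That strong Markov property is available by hypothesis, since $Z^0$ is a Brownian motion in the sense of definition~\Iref{def_3_1}; in other words, the strong Markov property of $X$ at $H_V$ is inherited from $Z^0$ rather than deduced from Feller continuity of $X$. To repair your argument you would have to establish the needed vertex continuity of the semigroup independently, which in effect amounts to redoing the paper's computation in a harder order.
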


\begin{proof}
To begin with, observe that since $\gO=\CD(\cG)$ and $X$ is the canonical coordinate
process, there is a natural family $\ga=(\ga_t,\,t\in\R_+)$ of stopping operators for
$X$, namely $\ga_t(\go) = \go(\,\cdot\,\land t)$, $t\in\R_+$. Therefore we get that
$\cF_T = \gs(X_{s\land T},\,s\in\R_+)$ for any stopping time $T$ relative to $\cF$.
Indeed, one can show this along the same lines used to prove Galmarino's
theorem (e.g., \cite[p.~458]{Ba91}, \cite[p.~86]{ItMc74}, \cite[p.~43~ff]{Kn81},
\cite[p.~45]{ReYo91}). Therefore it is sufficient to prove that for all $n\in\N$,
$s_1$, \dots, $s_n\in\R_+$, $t\in\R_+$, $\xi\in\cG$, and all $g\in\bcB(\cG^n)$,
$f\in\bcB(\cG)$, the following formula
\begin{equation}    \label{eq2xix}
\begin{split}
    E_\xi\bigl(g(X(s_1\land H_V),&\dotsc,X(s_1\land H_V))\,f(X(t+ H_V))\bigr)\\
        &= E_\xi\bigl(g(X(s_1\land H_V),\dotsc,X(s_1\land H_V))\,
                     E_{X(H_V)}\bigl(f(X(t))\bigr)\bigr)
\end{split}
\end{equation}
holds. Recall that $S_V$ denotes the hitting time of $V$ by $Y$. Since $P_\xi$ is
the image of $Q_\xi$ under $Y$, and since $S_V = H_V\comp Y$,
equation~\eqref{eq2xix} is equivalent to
\begin{equation}    \label{eq2xx}
    E_\xi\bigl(G\,f(Y(t+S_V))\bigr)
        = E_\xi\bigl(G\,E_{Y(S_V)}\bigl(f(Y(t))\bigr)\bigr),
\end{equation}
where we have set
\begin{equation*}
    G = g\bigl(Y(s_1\land S_V),\dotsc, Y(s_n\land S_V)\bigr).
\end{equation*}
Recall that $S_1$ denotes the hitting time of $V_c\subset V$ by $Y$, so that $S_V\le
S_1$. $Y$ is progressively measurable relative to $\cF^Y$ which entails that $G$ is
measurable with respect to $\cF^Y_{S_V}\subset \cF^Y_{S_1}\subset \cB_1$ (see also
the corresponding argument in the proof of lemma~\ref{lemA}). Using the notation of
subsection~\ref{ssect2iii} we write
\begin{equation}    \label{eq2xxi}
\begin{split}
    E_\xi\big(G\, &f(Y(t+S_V))\bigr)\\
        &= E_\xi\big(G\, f(Y(t+S_V);\,S_V\le t+S_V < S_1)\bigr)\\
        &\hspace{4em} + \sum_{n=1}^\infty E_\xi\bigl(G\,E_\xi\bigl(f(Y(t+u))\,
                    \chi_n(t+u)\cond \cB_1\bigr)\eval_{u=S_V}\bigr).
\end{split}
\end{equation}
For the last equality --- similarly as in the proof of lemma~\ref{lem2ii} ---
we made use of the product structure of the probability space $(\Xi,\cC,Q_\xi)$,
and the fact that $S_V\le S_1$, which entails that $S_V$ only depends on the
variable $\go^0\in \Xi^0$. By lemma~\ref{lem2vii} and formula~\eqref{eq2via} we
get for $u\le S_1$, $n\in\N$,
\begin{align*}
    E_\xi\bigl(f(Y(t+u))\,\chi_n(t+u)\cond \cB_1\bigr)
        &= R_{n-1}(f,1,t+u)(S_1,K_1)\\
        &= R_{n-1}(f,1,t)(S_1-u,K_1).
\end{align*}
Then
\begin{align*}
    E_\xi\bigl(G\, R_{n-1}&(f,1,t)(S_1-S_V,K_1)\bigr)\\
        &= E_\xi\bigl(G\,R_{n-1}(f,1,t)(0,K_1);\,S_V=S_1\bigr)\\
        &\hspace{4em} +E_\xi\bigl(G\,R_{n-1}(f,1,t)(S_1-S_V,K_1);\,S_V<S_1\bigr)\\
        &= E_\xi\bigl(G\,E_{Y(S_V)}\bigl(f(Y(t))\,\chi_{n-1}(t)\bigr);\,S_V=S_1\bigr)\\
        &\hspace{4em} +E_\xi\bigl(G\,R_{n-1}(f,1,t)(S_1-S_V,K_1);\,S_V<S_1\bigr),
\end{align*}
because on $S_V=S_1$, $Y(S_V)=Y(S_1)=K_1$. The second term on the right hand side of
the last equality only involves the random variables $Y(s_i\land S_V)$, $S_1$,
$S_V$, and $K_1$. They are all defined in terms of the strong Markov process $Z^0$
underlying the construction of $Y$ (cf.\ section~\ref{ssect2ii}). Moreover, on the
event $\{S_V < S_1\}$ we get from the definition of $S_1$ as the hitting time of
$V_c$ that $S_1 = S_V + S_1\comp \vt_{S_V}$. Also, on $\{S_V < S_1\}$,
$K_1 = K_1\circ\vt_{S_V}$ holds true.
On the other hand $G$ is measurable with respect to $\cF^0_{S_V}$, where $\cF^0$ is
the natural filtration of $Z^0$. Thus the strong Markov property of $Z^0$ gives
\begin{align*}
    E_\xi\bigl(G\,&R_{n-1}(f,1,t)(S_1-S_V,K_1);\,S_V<S_1\bigr)\\
        &= E_\xi\bigl(G\,E_{Y(S_V)}\bigl(R_{n-1}(f,1,t)(S_1,K_1)\bigr);\,S_V<S_1\bigr)\\
        &= E_\xi\bigl(G\,E_{Y(S_V)}\bigl(f(Y(t))\,\chi_n(t)\bigr);\,S_V<S_1\bigr).
\end{align*}
In the last step we used for $\eta\in\cG$,
\begin{align*}
    E_\eta\bigl(R_{n-1}(f,1,t)(S_1,K_1)\bigr)
        &= E_\eta\bigl(E_\eta\bigl(W_n(f,1,t)\cond \cB_1\bigr)\bigr)\\
        &= E_\eta\bigl(f(Y(t))\,\chi_n(t)\bigr),
\end{align*}
with another application of lemma~\ref{lem2vii}, and then we made the choice $\eta=
Y(S_V)$. Similarly, for the first term on the right hand side of equation~\eqref{eq2xxi}
we can use the strong Markov property of $Z^0$ (together with $\{t+S_V<S_1\}
= \{t<S_1\comp\vt_{S_V}\}\cap\{S_V<S_1\}$) to show that
\begin{equation*}
\begin{split}
    E_\xi\bigl(G\,f(Y(t+S_V));\,&S_V\le t+S_V < S_1\bigr)\\
        &= E_\xi\bigl(G\,E_{Y(S_V)}\bigl(f(Y(t);\,t<S_1)\bigr);\,S_V<S_1\bigr).
\end{split}
\end{equation*}
Inserting these results into the right hand side of formula~\eqref{eq2xxi}, we find
\begin{align*}
    E_\xi\bigl(&G\,f(Y(t+S_V))\bigr)\\
        &= E_\xi\bigl(G\,E_{Y(S_V)}\bigl(f(Y(t));\,t<S_1\bigr);\,S_V<S_1\bigr)\\
        &\hspace{4em} +\sum_{n=1}^\infty E_\xi\bigl(G\,E_{Y(S_V)}\bigl(f(Y(t))\,
                            \chi_n(t)\bigr);\,S_V<S_1\bigr)\\
        &\hspace{4em} +\sum_{n=0}^\infty E_\xi\bigl(G\,E_{Y(S_V)}\bigl(f(Y(t))\,
                            \chi_n(t)\bigr);\,S_V=S_1\bigr)\\
        &= E_\xi\bigl(G\,E_{Y(S_V)}\bigl(f(Y(t))\bigr)\bigr).
\end{align*}
Thus equation~\eqref{eq2xx} holds and lemma~\ref{lem2ix} is proved.
\end{proof}

\begin{proposition}   \label{prop2x}
$X$ is a Feller process.
\end{proposition}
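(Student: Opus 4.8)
The plan is to verify that the transition semigroup $T_t f(\xi)=E_\xi\bigl(f(X_t)\bigr)$, $f\in C_0(\cG)$, is a Feller semigroup, i.e.\ a positive contraction semigroup on $C_0(\cG)$ which maps $C_0(\cG)$ into itself and is strongly continuous at $t=0$. Positivity and the contraction property are immediate, and the semigroup identity $T_{s+t}=T_s\comp T_t$ is exactly the Markov property of Proposition~\ref{prop2viii}. Hence only two points require genuine work: that $T_t C_0(\cG)\subset C_0(\cG)$, and that $\|T_tf-f\|_\infty\to0$ as $t\downarrow0$.

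The main device is a renewal representation of $T_t$ through the Brownian motion $Z^0$ on $\cG_0$, which is a Feller process by article~I. Since $X$ is a version of $Y$, one has $f(Y(t))=\sum_{n\ge0}W_n(f,1,t)$, and conditioning the $n$-th term on $\cB_n$ by Lemma~\ref{lem2iv}, together with the identity $R_0(f,1,t)(s,v)=p^0_{t-s}f(v)$, yields
\begin{equation*}
    T_t f(\xi)=p^0_t f(\xi)+\sum_{n=1}^\infty E_\xi\bigl(p^0_{t-S_n}f(K_n)\bigr),
\end{equation*}
where $p^0_s f(\eta)=E_\eta\bigl(f(Z^0(s))\,1_{\{s<\tau^0\}}\bigr)$ is the sub-Markovian semigroup of $Z^0$ killed on first hitting the finite set $V_s$ of shadow vertices, and the summands vanish on $\{S_n>t\}\cup\{K_n=\gD\}$. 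Because $V_s$ consists of finitely many regular interior points of the edges of $\cG_0$ and $Z^0$ is Feller, $(p^0_s)$ is itself a sub-Feller semigroup; the law of the governing chain $\bigl((S_n,K_n)\bigr)$ is encoded in the semigroup $(U_n)$ of Lemma~\ref{lem2ii}. Under the identification of the truncated edge $l_k$ in $\cG_2$ with the new internal edge $i_k$ in $\cG$, hitting a shadow vertex $\eta$ by $Z^0$ corresponds to hitting the connecting vertex $\gk(\eta)\in V_c$ by $X$.

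The $C_0$-mapping property is then checked locally. On the interior of any edge that stays away from $V_c$, continuity of $\xi\mapsto T_tf(\xi)$ and its decay as $\xi\to\infty$ along the external edges follow from the corresponding properties of $(p^0_s)$ and from the fact that the hitting time of $V$ tends to $+\infty$ as $\xi$ recedes to infinity, so the re-entry sum is negligible there. The principal obstacle is continuity across the connecting vertices $V_c$. At such a $v$ the leading term $p^0_t f$ generally has different one-sided limits along the edges meeting at $v$ --- in particular it tends to $0$ along each new internal edge $i_k$, whose far end is a shadow vertex at which $p^0$ kills --- so the re-entry series must compensate precisely this jump and produce a common limit $T_t f(v)$ from every incident edge. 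To control this I would combine the continuous dependence of the first-shadow-hitting law on the starting point (Feller property of $Z^0$) with the strong Markov property at $H_V$ (Lemma~\ref{lem2ix}): writing $T_tf(\xi)=E_\xi\bigl(f(X_t);\,t<H_V\bigr)+E_\xi\bigl(1_{\{H_V\le t\}}(T_{t-H_V}f)(X_{H_V})\bigr)$ and letting $\xi\to v$, one has $H_V\to0$ in $Q_\xi$-probability and $X_{H_V}\to v$ since the paths are continuous up to $H_V$, which reduces the matching of the one-sided limits to the behaviour of $T_sf$ at the vertex and closes the argument via the renewal representation above.

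For strong continuity at $t=0$, normality and right-continuity of the paths give $f(X_t)\to f(\xi)$ $P_\xi$-a.s., hence $T_tf(\xi)\to f(\xi)$ pointwise by bounded convergence. To upgrade this to uniform convergence I would pass to the resolvent $R_\gl f=\int_0^\infty e^{-\gl t}T_tf\,dt$: the $C_0$-mapping property just established shows that $R_\gl$ maps $C_0(\cG)$ into itself with dense range, while $\gl R_\gl f\to f$ uniformly as $\gl\to\infty$, so that a standard Hille--Yosida argument (as in article~I) yields strong continuity. This proves that $(T_t)$ is a Feller semigroup and $X$ a Feller process; the full strong Markov property then follows automatically, completing the verification that $X$ is a Brownian motion on $\cG$. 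I expect the exact compensation, at the vertices of $V_c$, of the jump of the killed leading term by the re-entry series to be the main difficulty.
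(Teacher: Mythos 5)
Your route has a genuine gap at exactly the point you flag as ``the main difficulty'': the proof that $T_t$ maps $C_0(\cG)$ into itself is never actually completed, because the exact compensation at the connecting vertices $V_c$ of the jump of the killed term $p^0_tf$ by the re-entry series is announced but not established. The sketched reduction via the first-passage decomposition at $H_V$ does not close it: letting $\xi\to v$ gives $H_V\to0$ in probability and $X_{H_V}\to v$, but you then need left-continuity of $s\mapsto T_sf(v)$ at $s=t$ (equivalently, e.g., $P_v(\zeta=t)=0$, which requires an argument), and to use your renewal representation locally you need summability of $\sum_n E_\xi\bigl(p^0_{t-S_n}f(K_n)\bigr)$ \emph{uniformly} in $\xi$ near $V_c$ and as $\xi\to\infty$ (a geometric bound on the crossing counts of the chain $((S_n,K_n))$), none of which you supply. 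Moreover, your strong-continuity step is circular as written: asserting ``$\gl R_\gl f\to f$ uniformly as $\gl\to\infty$'' is essentially equivalent to the strong continuity you are trying to prove; the correct standard chain is pointwise convergence at $t=0$ plus a resolvent preserving $C_0(\cG)$, which gives weak convergence, hence dense range of $R_\gl$, hence strong continuity --- and that is precisely the criterion the paper invokes from appendix~A of article~I.

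This matters because the paper's proof is organized to \emph{avoid} your hard step entirely: it never shows $T_tC_0(\cG)\subset C_0(\cG)$ directly. It reduces the Feller property to (i) the resolvent preserves $C_0(\cG)$ and (ii) pointwise convergence $E_\xi(f(X_t))\to f(\xi)$ as $t\da0$; (ii) is dominated convergence, and (i) follows from the strong Markov property at $H_V$ (lemma~\ref{lem2ix}) via the first passage time formula~\eqref{eq2xxii}, in which every ingredient is explicit: the Dirichlet kernels~\eqref{eq2xxiii} are image sums of exponentials vanishing continuously at the vertices, and the hitting terms~\eqref{eq2xxiv}, \eqref{eq2xxv} are $\sinh$ ratios and $e^{-\sgl x}$ multiplying the finitely many constants $(R_\gl f)(v)$. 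Continuity across every vertex and decay at infinity are then read off the formula --- the compensation you would have to prove by hand in the time domain is automatic in the resolvent picture. To salvage your approach you would essentially have to redo this resolvent computation term by term in $t$, which is strictly harder than the paper's argument; as it stands, your proposal identifies the right obstacle but does not overcome it.
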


\begin{proof}
It is well-known, that it is sufficient to prove~(i) that the resolvent of $X$
preserves $C_0(\cG)$, and~(ii) that for all $f\in C_0(\cG)$, $\xi\in\cG$,
$E_\xi\bigl(f(X_t)\bigr)$ converges to $f(\xi)$ as $t$ decreases to $0$. (A complete
proof can be found in appendix~\ref{I_app_A} of~\cite{BMMG1}.) Statement~(ii) immediately
follows by an application of the dominated convergence theorem and the fact that $X$
is a normal process with right continuous paths.

To prove statement~(i) consider the resolvent $R = (R_\gl,\,\gl>0)$ of $X$, and let
$\gl>0$. Since $X$ is strongly Markovian with respect to the hitting time $H_V$ of
the set of vertices $V$ (lemma~\ref{lem2ix}), we get for $\xi\in\cG$,
$f\in\bcB(\cG)$ the first passage time formula
\begin{equation}    \label{eq2xxii}
    (R_\gl f)(\xi)
        = (R^D_\gl f)(\xi) + E_\xi\bigl(e^{-\gl H_V}\,(R_\gl f)(X_{H_V})\bigr),
\end{equation}
where $R^D$ is the Dirichlet resolvent. That is, $R^D$ is the resolvent of the
process $X$ with killing at the moment of reaching a vertex of $\cG$. Recall the
equivalence of the stopped process $X(\,\cdot\,\land H_V)$ with the Brownian motion
with absorption on the corresponding interval $I$ stated at the beginning of this
subsection. Then we can give explicit expressions for all entities appearing in the
first passage time formula~\eqref{eq2xxii}. Using the well-known formulae for
Brownian motions on the real line (see, e.g., \cite[p.~73ff]{DyJu69},
\cite[p.~29f]{ItMc74}) we find for $\xi$, $\eta\in\cG^\circ$,
\begin{subequations}    \label{eq2xxiii}
\begin{equation}    \label{eq2xxiiia}
    r^D_\gl(\xi,\eta)
        = \sum_{i\in \cI} r^D_{\gl,i}(\xi,\eta)\,1_{\{\xi,\eta\in i\}}
            + \sum_{e\in\cE} r^D_{\gl,e}(\xi,\eta)\,1_{\{\xi,\eta\in e\}},
\end{equation}
with
\begin{equation}    \label{eq2xxiiib}
    r^D_{\gl,i}(\xi,\eta)
        = \frac{1}{\sgl}\,\sum_{k\in\Z}\Bigl(e^{-\sgl|x-y+2ka_i|}-e^{-\sgl|x+y+2ka_i|}\Bigr),
\end{equation}
where in local coordinates $\xi=(i,x)$, $\eta=(i,y)$, $x$, $y\in (0,a_i)$. In the case of
an external edge $e$, we get
\begin{equation}    \label{eq2xxiiic}
    r^D_{\gl,e}(\xi,\eta) = \frac{1}{\sgl}\,\Bigl(e^{-\sgl|x-y|}-e^{-\sgl(x+y)}\Bigr),
\end{equation}
\end{subequations}
with $\xi=(e,x)$, $\eta=(e,y)$, $x$, $y\in (0+\infty)$. Remark that both kernels
vanish whenever $\xi$ or $\eta$ converge from the interior of any edge to a vertex
to which the edge is incident.

Consider the second term on the right hand side of equation~\eqref{eq2xxii}. Suppose that
$\xi\in i^\circ$, $i\in \cI$, and that $i$ is isomorphic to $[0,a_i]$. Assume furthermore
that $v_1$, $v_2$ are the vertices in $V$ to which $i$ is incident, and that under this
isomorphism $v_1$ corresponds to $0$, while $v_2$ corresponds to $a_i$. Then we get
\begin{equation*}
\begin{split}
    E_\xi\bigl(e^{-\gl H_V}\,(R_\gl f)(X_{H_V})\bigr)
        = E_\xi\bigl(&e^{-\gl H_{v_1}};\,H_{v_1}<H_{v_2}\bigr)\,(R_\gl f)(v_1)\\
            &+  E_\xi\bigl(e^{-\gl H_{v_2}};\,H_{v_2}<H_{v_1}\bigr)\,(R_\gl f)(v_2),
\end{split}
\end{equation*}
because $X$ has paths which are continuous up to the lifetime of $X$, and $X$ cannot
be killed before reaching a vertex. Here $H_{v_k}$, $k=1$, $2$, denotes the hitting
time of the vertex $v_k$.  The expectation values in the last line are those of a
standard Brownian motion and they are well-known, too (see, e.g.,
\cite[p.~73ff]{DyJu69}, \cite[p.~29f]{ItMc74}). Thus for $\xi = (i,x)$, $x\in
[0,a_i]$,
\begin{equation}    \label{eq2xxiv}
\begin{split}
    E_\xi\bigl(&e^{-\gl H_V}\,(R_\gl f)(X_{H_V})\bigr)\\
        &= \frac{\sinh\bigl(\sgl (a_i-x)\bigr)}{\sinh\bigl(\sgl a_i\bigr)}\,(R_\gl f)(v_1)
            + \frac{\sinh\bigl(\sgl x\bigr)}{\sinh\bigl(\sgl a_i\bigr)}\,(R_\gl f)(v_2).
\end{split}
\end{equation}
Similarly, for $\xi\in e^\circ$ with local coordinates $\xi=(e,x)$, $x\in (0,+\infty)$
we find
\begin{equation}    \label{eq2xxv}
    E_\xi\bigl(e^{-\gl H_V}\,(R_\gl f)(X_{H_V})\bigr)
        = e^{-\sgl x}\,(R_\gl f)(v),
\end{equation}
where $v$ is the vertex to which $e$ is incident.

With the formulae~\eqref{eq2xxii}--\eqref{eq2xxv} it is straightforward to check
that $R_\gl$ maps $C_0(\cG)$ into itself, and the proof is complete.
\end{proof}

Since $X$ has right continuous paths, standard results (see, e.g,
\cite[Theorem~III.3.1]{ReYo91}, or \cite[Theorem~III.15.3]{Wi79}) provide the

\begin{corollary}   \label{cor2xi}
$X$ is strongly Markovian.
\end{corollary}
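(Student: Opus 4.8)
The plan is to read off the strong Markov property directly from the two facts just established: that $X$ is a Feller process (Proposition~\ref{prop2x}) and that its paths are right continuous. Writing $(P_t f)(\eta)=E_\eta\bigl(f(X_t)\bigr)$ for the transition semigroup, the Feller property says precisely that $P_t$ maps $C_0(\cG)$ into itself with $\limep (P_t f)=f$ uniformly, and that the resolvent preserves $C_0(\cG)$. These are exactly the hypotheses of the classical theorem that a right-continuous Feller process is strongly Markovian, so the corollary follows by invoking \cite[Theorem~III.3.1]{ReYo91} or \cite[Theorem~III.15.3]{Wi79}; no new analysis is needed beyond what the previous results supply.

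For completeness, the underlying mechanism is the standard dyadic discretization. Given an $\cF$-stopping time $T$, first assumed bounded, one approximates it from above by $T_n=(\lfloor 2^n T\rfloor +1)\,2^{-n}$, which take values in a countable set and decrease to $T$. Conditioning on each admissible value and applying the simple Markov property of Proposition~\ref{prop2viii} at the relevant dyadic instants gives, for $f\in C_0(\cG)$,
\begin{equation*}
    E_\xi\bigl(f(X_{t+T_n})\cond \cF_{T_n}\bigr) = (P_t f)(X_{T_n})
\end{equation*}
$P_\xi$-a.s. Letting $n\to\infty$, right continuity of the paths yields $X_{T_n}\to X_T$ and $X_{t+T_n}\to X_{t+T}$, while the Feller property guarantees $P_t f\in C_0(\cG)$, whence $(P_t f)(X_{T_n})\to (P_t f)(X_T)$. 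Passing to the limit under the conditional expectations and using $\cF_T\subset \cF_{T_n}$ produces
\begin{equation*}
    E_\xi\bigl(f(X_{t+T})\cond \cF_{T}\bigr) = E_{X_T}\bigl(f(X_t)\bigr).
\end{equation*}
Truncation removes the boundedness of $T$, and a monotone class argument extends the identity from $f\in C_0(\cG)$ to all $f\in\bcB(\cG)$.

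The one delicate point, which is the reason the result is quoted rather than reproved, is the passage to the limit in the conditioning: the $\sigma$-algebras $\cF_{T_n}$ decrease rather than increase with $n$, so one must control $\bigcap_n \cF_{T_n}$ and invoke reverse martingale convergence, after which right continuity of the paths identifies the limit with the desired $\cF_T$-version. Since the finite-dimensional distributions of $X$ coincide with those of the process $Y$ constructed in subsections~\ref{ssect2ii}--\ref{ssect2iii}, the Markovianity input is already in hand, and these filtration subtleties are entirely generic to Feller processes; they are precisely what the cited theorems dispose of.
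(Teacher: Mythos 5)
Your proof is correct and follows exactly the paper's route: the paper, too, deduces the strong Markov property from Proposition~\ref{prop2x} (the Feller property) together with right continuity of the paths, by citing the same standard results \cite[Theorem~III.3.1]{ReYo91} and \cite[Theorem~III.15.3]{Wi79}. Your additional sketch of the dyadic approximation is fine as background, though when one conditions on a fixed event in $\cF_T\subset\cF_{T_n}$ the limit passage needs only dominated convergence and the continuity of $P_tf$, not reverse martingale convergence.
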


Thus we have also proved the

\begin{corollary}   \label{cor2xii}
$X$ is a Brownian motion on $\cG$ in the sense of definition~\Iref{def_3_1}.
\end{corollary}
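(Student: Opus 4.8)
The plan is to verify directly that the canonical process $X$ satisfies each of the defining properties of a Brownian motion on $\cG$ in the sense of definition~\Iref{def_3_1}; the point is that every ingredient required has, by this stage, already been assembled, so the corollary is a matter of matching the verified facts against the definition. First I would record that $X$ is a normal process: by construction $Y$ is normal (subsection~\ref{ssect2ii}), and since $X$ is a version of $Y$ with $P_\xi$ defined as the image of $Q_\xi$ under $Y$, it follows that $P_\xi(X_0=\xi)=1$ for every $\xi\in\cG$. The state space is $\cG\cup\{\gD\}$ with $\gD$ a cemetery point, a property checked for $Y$ which transfers verbatim to $X$.

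Next I would observe that the required path regularity is built into the sample space. Because $\gO=\CD(\R_+,\cG)$ and $X$ is the canonical coordinate process, every trajectory of $X$ is automatically right continuous, has left limits in $\cG$, and is continuous up to its lifetime $\zeta$. Moreover, by the discussion at the end of subsection~\ref{ssect2ii}, the left limit at the lifetime lies in $V_0\subset V$, so the only discontinuities of $X$ are jumps from a vertex to $\gD$, which is precisely the restriction on jumps imposed by the definition. The strong Markov property, the remaining structural requirement, is exactly corollary~\ref{cor2xi}, which itself issues from the Feller property established in proposition~\ref{prop2x} together with the right continuity of the paths.

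Finally I would invoke property~(ii) of the definition, namely that the process stopped upon hitting a vertex is equivalent to a standard Brownian motion with absorption. This was already recorded at the beginning of subsection~\ref{ssect2iv}: for a starting point $\xi\in l^\circ$ with $l$ isomorphic to an interval $I$, the stopped process $X(\,\cdot\,\land H_V)$ is equivalent to standard Brownian motion on $I$ with absorption at the endpoint(s) of $I$. Collecting these observations shows that $X$ meets every clause of definition~\Iref{def_3_1}. I expect no genuine obstacle at this step: all the difficulty resides in the preceding results establishing the simple Markov property (proposition~\ref{prop2i}), the Feller property (proposition~\ref{prop2x}), and hence the strong Markov property (corollary~\ref{cor2xi}); the present corollary is purely an act of assembling these verified properties into the form demanded by the definition.
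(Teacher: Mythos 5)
Your proposal is correct and matches the paper's own route exactly: the paper already notes at the start of subsection~\ref{ssect2iv} that $X$ satisfies all defining properties of definition~\Iref{def_3_1} (normality, path regularity on $\gO=\CD(\R_+,\cG)$, and equivalence of $X(\,\cdot\,\land H_V)$ to absorbed Brownian motion on the relevant interval) except the strong Markov property, which is then supplied by proposition~\ref{prop2x} and corollary~\ref{cor2xi}. Your assembly of these verified facts is precisely how the paper obtains corollary~\ref{cor2xii}.
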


It remains to calculate the domain of the generator of $X$, i.e., the boundary
conditions at the vertices. Let $v\in V$, and assume that $X$ starts in $v$. Then by
construction of $X$ and $Y$, $X$ is equivalent to $Z^0$ with start in $v$ up to its
first hitting of a shadow vertex. That is, $X$ is equivalent to $Z^0$ up to the
first time $X$ hits a vertex different from $v$. It follows that if $v$ is absorbing
for $Z^0$, then it is so for $X$, and if $v$ is an exponential holding point with
jump to $\gD$, then it is also so for $X$ with the same exponential rate. In
particular, $v$ is a trap for $X$ if and only if it is a trap for $Z^0$. If $v$ is
not a trap, then we can use Dynkin's formula~\cite[p.~140, ff.]{Dy65a} to calculate
the boundary condition implemented by $X$. Clearly, this gives the same boundary
conditions as for $Z^0$, because Dynkin's formula only involves an arbitrary small
neighborhood of the vertex. (See also the corresponding arguments in articles~I
and~II.) Thus we have proved the following

\begin{theorem} \label{thm2xiii}
$X$ is a Brownian motion on $\cG$ whose generator has a domain characterized by
the same boundary conditions as the generator of $Z^0$.
\end{theorem}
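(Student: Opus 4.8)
The plan is to invoke the previously established fact that $X$ is a Brownian motion on $\cG$ together with the \emph{locality} of the Wentzell boundary conditions, so that the boundary data of $X$ can be read off from those of $Z^0$ without any fresh computation. By corollary~\ref{cor2xii}, $X$ is a Brownian motion on $\cG$, so Feller's theorem for metric graphs (theorem~\ref{thm1i}) already guarantees that its generator $A$ on $C_0(\cG)$ has domain $\cH_{a,b,c}$ for some data $a$, $b$, $c$ subject to~\eqref{eq1i} and~\eqref{eq1ii}. It therefore suffices to show, vertex by vertex, that these data coincide with the data entering the boundary condition~\eqref{eq1iii} for the generator of $Z^0$. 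The guiding principle is that the coefficients $(a_v,b_{v_l},c_v)$ at a vertex $v$ depend only on the behaviour of the process in an arbitrarily small neighbourhood of $v$.

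First I would fix $v\in V=V_0$ and use that, started at $v$, the process $X$ is pathwise equal to $Z^0$ up to the first hitting time $S_1$ of a shadow vertex. Every shadow vertex has local coordinate of the form $(\,\cdot\,,b_k)$ on one of the joined edges and therefore lies at distance at least $\min_{1\le j\le N}b_j>0$ from every vertex of $\cG$, in particular from $v$. Choosing a neighbourhood $U$ of $v$ of radius strictly smaller than this minimum, $U$ contains no shadow vertex, and as long as the path stays in $U$ no crossover is triggered, so $X$ and $Z^0$ agree on $U$. For $v\notin V_c$ none of the edges incident with $v$ has been altered and $X$ coincides with $Z^0$ near $v$ directly; for $v\in V_c$ each joined external edge $e_k$ of $\cG_0$ at $v$ is, near $v$, identified through the shared local coordinate with the new internal edge $i_k$ of $\cG$, so that $f(v)$, the inward directional derivatives $f'(v_{e_k})=f'(v_{i_k})$, and $f''(v)$ all correspond under this identification.

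With $U$ at hand the data are extracted exactly as in articles~I and~II. I would first dispose of the trap dichotomy: since started at $v$ the process follows $Z^0$ until $S_1$, the vertex $v$ is absorbing, respectively an exponential holding point with a jump to $\gD$ at a prescribed rate, for $X$ precisely when it is so for $Z^0$; hence $v$ is a trap for $X$ if and only if it is a trap for $Z^0$, and in these degenerate cases the corresponding extreme values of $(a_v,b_{v_l},c_v)$ match automatically. If $v$ is not a trap, I would apply Dynkin's formula~\cite{Dy65a} on $U$: because $X$ and $Z^0$ agree on $U$, the exit distribution $X_{\tau_U}$ and the mean exit time $E_v(\tau_U)$ coincide with those of $Z^0$, so Dynkin's formula yields for $X$ the same local operator at $v$, and thus the same coefficients $(a_v,b_{v_l},c_v)$, as for $Z^0$. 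Ranging over all $v\in V$ gives equality of the data, and therefore $\cH_{a,b,c}$ is precisely the domain characterized by the boundary conditions of $Z^0$.

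I expect the only real subtlety to be the bookkeeping of the edge identification: one must verify that the inward normal derivative along the external edge $e_k$ in $\cG_0$ is literally the inward normal derivative along $i_k$ in $\cG$ under the length-preserving local coordinate, so that the term-by-term comparison of the sums over $\cL(v)$ in~\eqref{eq1iii} is legitimate, and that $U$ can be taken small enough for Dynkin's formula to isolate the single vertex $v$. Granted the strong Markov and Feller properties (corollaries~\ref{cor2xi} and~\ref{cor2xii}), this is the soft part of the construction, since the locality of~\eqref{eq1iii} reduces the whole statement to the pathwise identity of $X$ and $Z^0$ in a neighbourhood of each vertex.
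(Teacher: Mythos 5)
Your proposal is correct and takes essentially the same approach as the paper: the paper's proof likewise observes that, started at a vertex $v$, the process $X$ is equivalent to $Z^0$ until the first hitting of a shadow vertex (i.e., of a vertex different from $v$), settles the absorbing/exponential-holding/trap cases by this identification, and for non-traps applies Dynkin's formula in an arbitrarily small neighbourhood of $v$ to conclude that the boundary data coincide. Your additional details --- the quantitative radius bound $\min_{1\le k\le N} b_k$ ensuring the neighbourhood contains no shadow vertex, the explicit appeal to theorem~\ref{thm1i}, and the edge-identification bookkeeping for $v\in V_c$ --- merely make explicit what the paper leaves implicit.
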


\section{Proof of Theorem~\ref{thm1ii}}  \label{sect3}

Suppose that $\cG=(V,\cI,\cE,\p)$ is a metric graph without tadpoles. Let data $a$,
$b$, $c$ as in~\eqref{eq1i} be given which satisfy equation~\eqref{eq1ii}.

With every $v\in V$ we associate a single vertex graph $\cG(v)$ consisting of the
vertex $v$ and $|\cL(v)|$ external edges. For each $v\in V$ we construct for the
data $a_v$, $b_v = (b_{v_l},\,l\in\cL(v))$, $c_v$ a Brownian motion $X^v$ on the
single vertex graph as in article~II: If $b_v=0$ then this trivially a collection of
$|\cL(v)|$ standard Brownian motions on the real line, mapped onto the external
edges of $\cG(v)$, which are such that they are absorbed at the origin
(corresponding to the vertex $v$) if $c_v=1$. If $c_v<1$ they are killed by a jump
to $\gD$ after holding the processes at the origin for an independent exponentially
distributed time of rate $a/c$. See subsection~\IIref{ssect1.4} for details. If
$b_v\ne 0$, the process $X^v$ is constructed as in section~\IIref{sect5}. The idea
(which for the case of intervals goes back to~\cite{ItMc63} and \cite{Fe54, Fe54a,
Fe57}) uses a Walsh process on $\cG(v)$ (see \cite{Wa78}, \cite{BaPi89}), and builds
in a time delay as well as killing, both on the scale of the local time at the
vertex. With appropriately chosen parameters for these two mechanisms,
theorem~\IIref{thm5.4} states that $X^v$ is a Brownian motion on $\cG(v)$ such that
its generator is the $1/2$ times the second derivative acting on $f\in
C^2_0(\cG(v))$ with boundary conditions at the vertex $v$ given by~\eqref{eq1iii}.

Next we build the graph $\cG$ by successively connecting appropriately chosen
external edges of the single vertex graphs $\cG(v)$, $v\in V$, as in
section~\ref{ssect2i}. Consider the stochastic process $X$ which is successively
constructed from the Brownian motions $X^v$ as in
subsections~\ref{ssect2ii}--\ref{ssect2iv}. Theorem~\ref{thm2xiii} states that $X$
is a Brownian motion on $\cG$ which is such that its generator has a domain which is
characterized by the same boundary conditions at each vertex $v\in V$ as for the
single vertex graphs $\cG(v)$. Therefore, $X$ is a Brownian motion on $\cG$ as in
the statement of theorem~\ref{thm1ii}, whose proof is therefore complete.

\section{Discussion of Tadpoles}    \label{sect4}

Suppose that $\cG$ is a metric graph which has one tadpole $i_t$ connected to a
vertex $v\in V$. That is, $v$ is simultaneously the initial and final vertex of $i_t$:
$\p(i_t)=(v,v)$. Figure~\ref{tad_i} shows a metric graph with a tadpole attached to
the vertex $v$.
\begin{figure}[ht]
\begin{center}
    \includegraphics[scale=.8]{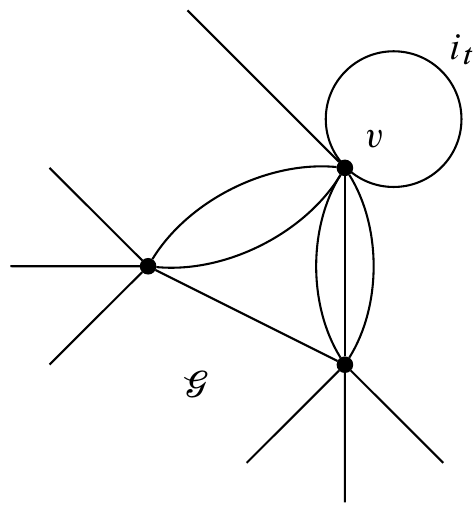}\\
    \caption{A metric graph with a tadpole $i_t$ attached to $v$.} \label{tad_i}
\end{center}
\end{figure}
Let $b_t$ be the length of $i_t$. Assume furthermore, that
we are given data $a$, $b$, $c$ as in equations~\eqref{eq1i}, \eqref{eq1ii}.
We want to construct a Brownian motion $X$ on $\cG$ the implementing boundary
conditions corresponding to these data.

Let $\cG_1$ be the metric graph obtained from $\cG$ by replacing the tadpole by two
external edges $e_1$, $e_2$, incident with $v$. Construct a Brownian motion $X_1$
on $\cG_1$ corresponding to the data $a$, $b$, $c$ as above.

Consider the real line $\R$ as a single vertex graph $\cG_2$ with the origin as the
vertex $v_0$, and edges $l_1$, $l_2$ which are isomorphic to $[0,+\infty)$,
$(-\infty, 0]$. Take a Walsh process $X_2$ on this graph which with probability
$1/2$ chooses either edge for the next excursion when at the origin. Then $X_2$ is
just a ``skew'' Brownian motion as in~\cite[p.~115]{ItMc74} which actually is not
skew. That is, it is equivalent to a standard Brownian motion on the real line.

Now join $\cG_1$ and $\cG_2$ by connecting the pairs $(e_1,l_1)$, $(e_2,l_2)$ via
two new internal edges of length $b_t/2$. Denote the resulting metric graph by $\hat
\cG$. Figure~\ref{tad_ii} shows this construction.
\begin{figure}[ht]
\begin{center}
    \includegraphics[scale=.8]{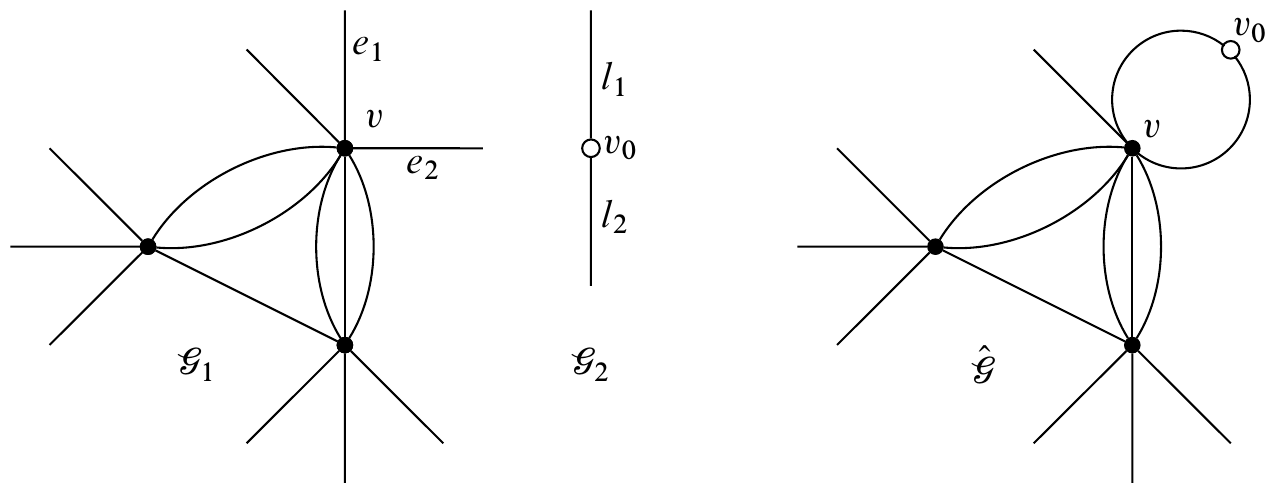}\\
    \caption{$\hat \cG$ constructed from $\cG_1$ and $\cG_2$.} \label{tad_ii}
\end{center}
\end{figure}
Construct a Brownian motion $\hat X$ on $\hat\cG$ from $X_1$ and $X_2$ as in
section~\ref{sect2}. By construction, $\hat X$ is equivalent to a standard Brownian
motion in every neighborhood of $v_0$ which is small enough such that it does not
include the vertex $v$. Therefore the additional vertex $v_0$ of $\cG$ does not
yield any non-trivial boundary condition. Thus, if we identify the open tadpole edge
$i_t^0$ with the subset of $\cG_2$ isomorphic to $(-b_t/1,b_t/2)$, then we obtain a
Brownian motion $X$ on $\cG$ implementing the desired boundary conditions.

Obviously, any (finite) number of tadpoles can be handled in the same way.

\begin{appendix}
\section{On the Crossover Times $S_n$}    \label{appA}
We recall from section~\ref{sect2} that in terms of the process $Y$ the crossover
times $S_n$, $n\in\N$, can be described as follows. Let $Y$ start in $\xi\in\cG$.
Then $S_1$ is the hitting time of $V_c\setminus\{\xi\}$ by $Y$. In particular,
$S_1>0$ for all paths of $Y$. For $n\ge 2$, $S_n$ is the hitting time after
$S_{n-1}$ of $V_c\setminus\{K_{n-1}\}$ by $Y$. Since by construction
$Y(S_{n-1})=K_{n-1}$ and the paths of $Y$ are continuous on $[0,\zeta)$, we get
$S_n>S_{n-1}$ for all paths of $Y$. Therefore
\begin{align*}
    S_n &= \inf\,\bigl\{t>S_{n-1},\,Y(t)\in V_c\setminus\{K_{n-1}\}\bigr\}\\
        &= \inf\,\bigl\{t\ge S_{n-1},\,Y(t)\in V_c\setminus\{K_{n-1}\}\bigr\}
\end{align*}
holds.

In this appendix we prove the following

\begin{lemma}   \label{lemA}
For every $n\in\N$, $S_n$ is a stopping time relative to $\cF^Y$.
\end{lemma}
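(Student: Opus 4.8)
The plan is to argue by induction on $n$, using the recursive description of the crossover times recalled at the start of this appendix, namely $S_n=\inf\{t\ge S_{n-1}:Y(t)\in V_c\setminus\{K_{n-1}\}\}$ together with the identity $Y(S_{n-1})=K_{n-1}$. The only genuine difficulty is that the target set $V_c\setminus\{K_{n-1}\}$ is itself random; since $V_c$ is finite this is handled by splitting over the finitely many possible values of $K_{n-1}$, so that everything reduces to a single deterministic-target statement, which I isolate as follows. \textbf{Claim.} For every $\cF^Y$-stopping time $T$ and every fixed finite set $F\subseteq\cG$, the time $\rho_F=\inf\{t\ge T:Y(t)\in F\}$ (with $\inf\emptyset=+\infty$) is an $\cF^Y$-stopping time.

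First I would establish the Claim. The two structural facts I would exploit are that the paths of $Y$ are continuous on $[0,\zeta)$ and that the only jumps of $Y$ are to the cemetery $\gD\notin\cG$; in particular $Y$ can meet $F\subseteq\cG$ only during the continuous phase. Writing $d$ for the metric on $\cG$ and setting $g(s)=d(Y(s),F)\wedge 1$, with the convention $g(s)=1$ when $Y(s)=\gD$, the map $s\mapsto g(s)$ is continuous on $[0,\zeta)$, $F$ is closed, and $\{Y(s)\in F\}=\{g(s)=0\}$. I would then express $\{\rho_F\le t\}$ as a countable combination of the events $\{T\le r\}$ and $\{g(r)<1/m\}$ over $r\in\Q\cap[0,t]$ and $m\in\N$, together with the endpoint events $\{Y(T)\in F,\,T\le t\}$ and $\{Y(t)\in F\}$. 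Each of these lies in $\cF^Y_t$: the first because $T$ is a stopping time, the rest because $Y(r)$ and $Y(t)$ are $\cF^Y_t$-measurable and, since $Y$ is progressively measurable, $Y(T)\mathbf{1}_{\{T\le t\}}$ is $\cF^Y_t$-measurable. Hence $\{\rho_F\le t\}\in\cF^Y_t$.

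With the Claim in hand, the base case is immediate: under $Q_\xi$ one has $Y(0)=\xi\notin V_c\setminus\{\xi\}$, so $S_1=\inf\{t\ge 0:Y(t)\in V_c\setminus\{\xi\}\}=\rho_{V_c\setminus\{\xi\}}$ with $T=0$, a stopping time by the Claim applied to the fixed set $V_c\setminus\{\xi\}$. For the induction step I would assume that $S_{n-1}$ is an $\cF^Y$-stopping time; then $K_{n-1}=Y(S_{n-1})$ is $\cF^Y_{S_{n-1}}$-measurable on $\{S_{n-1}<\infty\}$, so $\{K_{n-1}=v\}\cap\{S_{n-1}\le t\}\in\cF^Y_t$ for each $v\in V_c$. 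On $\{K_{n-1}=v\}$ the defining formula gives $S_n=\rho_{V_c\setminus\{v\}}$, the first entry into the \emph{fixed} set $V_c\setminus\{v\}$ at or after $S_{n-1}$, whence $\{\rho_{V_c\setminus\{v\}}\le t\}\in\cF^Y_t$ by the Claim with $T=S_{n-1}$. Decomposing and using $\rho_{V_c\setminus\{v\}}\ge S_{n-1}$ to place $\{K_{n-1}=v\}$ inside $\cF^Y_t$,
\begin{align*}
    \{S_n\le t\}
        &= \bigcup_{v\in V_c}\{K_{n-1}=v\}\cap\{\rho_{V_c\setminus\{v\}}\le t\}\\
        &= \bigcup_{v\in V_c}\bigl(\{K_{n-1}=v\}\cap\{S_{n-1}\le t\}\bigr)\cap\{\rho_{V_c\setminus\{v\}}\le t\}\;\in\;\cF^Y_t,
\end{align*}
which completes the induction.

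The hard part will be the Claim, and specifically getting the identity for $\{\rho_F\le t\}$ exactly right. Since $\cF^Y$ is not assumed right continuous, I cannot simply invoke a d\'ebut theorem, so the argument must be run by hand, and two points need care: the random left endpoint $T$ of the interval $[T,t]$ (handled by intersecting the approximating events with $\{T\le r\}$, which is legitimate because $T$ is a stopping time), and the approximation of the attained infimum of the continuous function $g$ over $[T,t]$ by its values at rational times, valid precisely because the paths are continuous on $[0,\zeta)$ and $F$ is closed and stays away from $\gD$. I would also double-check the degenerate situations --- entry exactly at $T$ or exactly at $t$ (covered by the endpoint events), the case $V_c\setminus\{v\}=\emptyset$ (where $\rho=+\infty$, consistent with $\inf\emptyset=+\infty$), and the borderline event where $Y$ merely approaches $F$ as it is killed; here I would use that $Y$ is continuous up to its lifetime with $Y(\zeta-)\in V_0$ and that killing occurs only after genuinely occupying a vertex, so that an approach to a vertex in the limit is in fact preceded by an actual visit, making this event benign.
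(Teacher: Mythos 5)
Your overall skeleton coincides with the paper's: induction on $n$, with $K_{n-1}=Y(S_{n-1})$ made $\cF^Y_u$-measurable on $\{S_{n-1}\le u\}$ via progressive measurability, and the hitting event approximated through rational times using continuity of the paths on $[0,\zeta)$; your splitting over the finitely many values $v\in V_c$ (versus the paper's direct use of the random set $V_c\setminus\{K_{n-1}\}$ inside the approximating event) and your isolation of a general claim for a stopping time $T$ are only cosmetic variants. The one substantive divergence is exactly the point you flag at the end, and there your argument has a genuine gap: the backward inclusion in your identity for $\{\rho_F\le t\}$ breaks down when the rational times $r_m$ with $d(Y(r_m),F)<1/m$ accumulate at $u=\zeta\le t$, i.e., when the path approaches a vertex of $F$ precisely as it is killed. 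You dispose of this event by asserting that ``killing occurs only after genuinely occupying a vertex.'' But this is not among the standing assumptions: section~\ref{ssect2ii} takes as building blocks \emph{arbitrary} Brownian motions on $\cG_0$ in the sense of definition~I.3.1, and that definition only guarantees that the jump to $\gD$ occurs from a vertex, i.e., $Y(\zeta-)\in V_0$; it does not guarantee that this vertex was actually visited at times $t<\zeta$. One can argue, via the Feller property and quasi-left-continuity, that the offending event is a null set --- but that is not enough here, because $\cF^Y$ is the raw natural filtration, with no completion in sight, so the identity for $\{S_n\le t\}$ must hold \emph{pathwise}, not merely almost surely. (Your claim is pathwise true for the article~II processes used in section~\ref{sect3}, where killing is implemented by exponential holding or on the local time scale at the vertex, but the lemma is stated and used at the level of generality of section~\ref{sect2}.)

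The paper's proof avoids this issue by a different, purely pathwise device, which is the missing idea: it never approximates $\{S_n\le t\}$ directly, but instead works with the truncated events $A_{n,r}=\{S_n\le r<\zeta\}$ and proves $A_{n,r}=B_{n,r}$, where $B_{n,r}$ carries the factor $\{r<\zeta\}$ inside. Because of this factor, any limit point $u$ of the approximating rational times lies in $[S_{n-1},r]\subset[0,\zeta)$, where the path is continuous, so the degenerate case $u=\zeta$ simply cannot occur. The full event is then reassembled as
\begin{equation*}
    \{S_n\le t\} \;=\; A_{n,t}\,\cup\,\Bigl(\bigcup_{r\in\Q,\,0\le r\le t} A_{n,r}\cap\{\zeta\le t\}\Bigr),
\end{equation*}
which is valid for every path because $S_n<\zeta$ whenever $S_n<+\infty$ (the path is \emph{at} a point of $V_c\subset\cG$ at time $S_n$), so a rational $r$ with $S_n\le r<\zeta$ always exists. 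If you replace your endpoint-plus-borderline bookkeeping by this truncation-and-reassembly step inside your Claim (with $T=S_{n-1}$), the rest of your argument goes through verbatim and needs no assumption whatsoever on the killing mechanism.
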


\begin{proof}
Set $S_0=0$, and for $n\in\N$,
\begin{equation*}
    V_n = \begin{cases}
            V_c\setminus\{\xi\},     & \text{if $n=1$},\\[1ex]
            V_c\setminus\{K_{n-1}\}, & \text{otherwise}.
          \end{cases}
\end{equation*}
For $n\in\N$, $r\in\R_+$ define
\begin{align*}
    A_{n,r} &= \{S_n \le r < \zeta\}\\[1ex]
    B_{n,r} &= \bigcap_{m\in\N}\bigcup_{u\in\Q,\,0\le u\le r} \bigl\{S_{n-1}\le u,\,
                            d\bigl(Y(u), V_n\bigr)\le 1/m,\,r < \zeta\bigr\}.
\end{align*}
We claim that for all $n\in\N$, $r\in\R_+$,
\begin{equation}    \label{eqAi}
    A_{n,r}=B_{n,r}.
\end{equation}
To prove this claim, suppose first that $\go\in A_{n,r}$. Then $S_n(\go)$ is finite,
and therefore the set
\begin{equation*}
    \bigl\{t\ge S_{n-1}(\go),\,Y(t,\go)\in V_n(\go)\bigr\}\subset [0,\zeta(\go))
\end{equation*}
is non-empty. Thus there exists a sequence $(u_i,\,i\in\N)$ in this set which decreases
to $S_n(\go)$. Since $Y(\,\cdot\,,\go)$ is continuous
on $[0,\zeta(\go))$, it follows that $Y(S_n(\go),\go)\in V_n(\go)$. By assumption,
$S_n(\go)\le r <\zeta(\go)$, and therefore the continuity of $Y(\,\cdot\,,\go)$
on $[0,\zeta(\go))$ and $S_{n-1}(\go)<S_n(\go)$ imply that for every $m\in\N$
there exists $u\in\Q\cap[S_{n-1}(\go),r]$ so that $d(Y(u,\go),V_n(\go))\le 1/m$.
Hence $\go\in B_{n,r}$.

As for the converse, suppose now that $\go\in B_{n,r}$. Then there exists a sequence
$(u_m,\,m\in\N)$ in $\Q\cap[S_{n-1}(\go),r]$ so that $d(Y(u_m,\go), V_n(\go))$ converges
to zero as $m$ tends to infinity. Since $(u_m,\,m\in\N)$ is bounded we may assume, by
selecting a subsequence if necessary, that $(u_m,\,m\in\N)$ converges to  some
$u\in[S_{n-1}(\go),r]$ as $m\to+\infty$. Thus we find that $Y(u,\go)\in V_n(\go)$,
and therefore $Y(\,\cdot\,,\go)$ hits $V_n(\go)$ in the interval $[S_{n-1}(\go),r]$.
Consequently, $S_n(\go)\le r$, and hence $\go\in A_{n,r}$, concluding the
proof of the claim.

Next we prove by induction that for every $n\in\N$, $S_n$ is an $\cF^Y$--stopping time.
Let $n=1$. By~\eqref{eqAi} for every $r\ge 0$,
\begin{equation*}
    A_{1,r} = \bigcap_{m\in\N}\bigcup_{u\in\Q,\,0\le u\le r} \bigl\{
                            d\bigl(Y(u), V_1\bigr)\le 1/m\}\cap \{r < \zeta\bigr\}.
\end{equation*}
Clearly, $\{r<\zeta\} = \{Y(r)\in\cG\}\in\cF^Y_r$. Moreover, since $V_1$ is a
deterministic set, $d(\,\cdot\,,V_1)$ is measurable from $\cG$ to $\R_+$, and
therefore $\{d(Y(u),V_1)\le 1/m\}\in\cF^Y_u \subset \cF^Y_r$. Hence
$A_{1,r}\in\cF^Y_r$. Let $t\ge 0$, and write
\begin{align*}
    \{S_1 \le t\}
        &= \{S_1<\zeta\le t\} \cup A_{1,t}\\
        &= \Bigl(\bigcup_{r\in\Q,\,0\le r\le t} \{S_1\le r <\zeta\}\cap\{\zeta\le t\}\Bigr)\cup A_{1,t}\\
        &= \Bigl(\bigcup_{r\in\Q,\,0\le r\le t} A_{1,r}\cap\{\zeta\le t\}\Bigr)\cup A_{1,t}.
\end{align*}
Therefore $\{S_1\le t\}\in\cF^Y_t$, and hence $S_1$ is a stopping time relative to
$\cF^Y$.

Now suppose that $n\in\N$, $n\ge 2$, and that $S_{n-1}$ is an $\cF^Y$--stopping
time. We show that for all $r\in\R_+$, $A_{n,r}\in\cF^Y_r$. First we remark that
since $\cG$ is a separable metric space, the metric $d$ on $\cG$ is a measurable
mapping from $(\cG\times\cG, \cB_d\otimes\cB_d)$ to $(\R_+,\cB(\R_+))$. For example,
this follows from Theorem~I.1.10 in~\cite{Pa67}, and the continuity of
$d:\cG\times\cG \to \R_+$ when $\cG\times\cG$ is equipped with the product topology.
Consider $K_{n-1}= Y(S_{n-1})$. Since $Y$ has right continuous paths it is
progressively measurable relative to $\cF^Y$ (e.g.,
\cite[Proposition~I.4.8]{ReYo91}). Thus by Proposition~I.4.9 in~\cite{ReYo91} it
follows that $K_{n-1}$ is $\cF^Y_{S_{n-1}}$--measurable. Consequently on
$\{S_{n-1}\le u\}$, $K_{n-1}$ is $\cF^Y_u$--measurable. Equation~\eqref{eqAi} reads
\begin{equation*}
    A_{n,r} = \bigcap_{m\in\N} \bigcup_{u\in\Q,\,0\le u\le r} \{S_{n-1}\le u\}
                    \cap \bigl\{d\bigl(Y(u),V_c\setminus K_{n-1}\bigr)\le 1/m\bigr\}
                    \cap \{r< \zeta\}.
\end{equation*}
It follows that $A_{n,r}\in\cF^Y_r$, as claimed. But then $\{S_n\le t\}\in\cF^Y_t$
for all $t\in\R_+$ is proved with the same argument as at the end of the discussion
of the case $n=1$.
\end{proof}

\end{appendix}

\providecommand{\bysame}{\leavevmode\hbox to3em{\hrulefill}\thinspace}
\providecommand{\MR}{\relax\ifhmode\unskip\space\fi MR }
\providecommand{\MRhref}[2]{%
  \href{http://www.ams.org/mathscinet-getitem?mr=#1}{#2}
}
\providecommand{\href}[2]{#2}

\end{document}